\providecommand{\U}[1]{\protect\rule{.1in}{.1in}}
\newtheorem{theorem}{Theorem}[section]
\newtheorem{corollary}[theorem]{Corollary}
\newtheorem{definition}[theorem]{Definition}
\newtheorem{lemma}[theorem]{Lemma}
\newtheorem{question}[theorem]{Question}
\newtheorem{proposition}[theorem]{Proposition}
\newtheorem{remark}[theorem]{Remark}
\newenvironment{proof}[1][Proof]{\noindent\textbf{#1.} }{\ \rule{0.5em}{0.5em}}
\begin{document}

\title{Invariant subspaces of the direct sum of forward and backward shifts on vector-valued Hardy spaces}
\author{Caixing Gu and Shuaibing Luo}
\date{}

\maketitle

\begin{abstract}
Let $S_{E}$ be the shift operator on vector-valued Hardy space $H_{E}^{2}.$
Beurling-Lax-Halmos Theorem identifies the invariant subspaces of $S_{E}$ and
hence also the invariant subspaces of the backward shift $S_{E}^{\ast}.$ In
this paper, we study the invariant subspaces of $S_{E}\oplus S_{F}^{\ast}.$ We
establish a one-to-one correspondence between the invariant subspaces of
$S_{E}\oplus S_{F}^{\ast}$ and a class of invariant subspaces of bilateral
shift $B_{E}\oplus B_{F}$ which were described by Helson and Lowdenslager
\cite{HelsonL}. As applications, we express invariant subspaces of
$S_{E}\oplus S_{F}^{\ast}$ as kernels or ranges of mixed Toeplitz operators
and Hankel operators with partial isometry-valued symbols. Our approach
greatly extends and gives different proofs of the results of C\^{a}mara and
Ross \cite{CRoss}, and Timotin \cite{Timotin} where the case with one
dimensional $E$ and $F$ was considered.

\end{abstract}

\section{Introduction}

Let $L^{2}$ be the space of square integrable functions on the unit circle
$\mathbb{T}$ with respect to the normalized Lebesgue measure. Let $H^{2}$ be
the Hardy space on the open unit disk $\mathbb{D}$. The $L^{\infty}$ and
$H^{\infty}$ are the algebras of bounded functions in $L^{2}$ and $H^{2}$
respectively. Let $E$ and $F$ be two complex separable Hilbert spaces. Let
$B(E\mathcal{)}$ be the algebra of bounded linear operators on $E$ and
$B(E,F)$ be the set of bounded linear operators from $E$ into $F.$ The
$L_{E}^{2}$ and $H_{E}^{2}$ denote $E$-valued $L^{2}$ and $H^{2}$ spaces. The
$L_{B(E)}^{\infty}$ and $H_{B(E)}^{\infty}$ are operator-valued $L^{\infty}$
and $H^{\infty}$ algebras, and $L_{B(E,F)}^{\infty}$ and $H_{B(E,F)}^{\infty}$
are operator-valued $L^{\infty}$ and $H^{\infty}$ functions.

Denote by $P$ the projection from $L_{E}^{2}$ to $H_{E}^{2}$ and $Q:=I-P$ the
projection from $L_{E}^{2}$ to $\overline{zH_{E}^{2}}:=L_{E}^{2}\ominus
H_{E}^{2},$ where $E$ is any complex separable Hilbert space. Let $\Phi\in
L_{B(E,F)}^{\infty}.$ The multiplication operator by $\Phi$ from $L_{E}^{2}$
into $L_{F}^{2}$ is denoted by $M_{\Phi}.$ The (block) Toeplitz operator
$T_{\Phi}$ from $H_{E}^{2}$ into $H_{F}^{2}$ is defined by%
\[
T_{\Phi}h =P\left[  \Phi h\right]  ,h\in H_{E}^{2}.
\]
When $E=F,$ let $T_{z}$($=T_{zI_{E}}$) denote the shift operator on $H_{E}%
^{2}$ for some $E$ which the context will make it clear. We will also use
$S_{E}$ to denote this shift operator $T_{z}$ on $H_{E}^{2}.$ Similarly
$M_{z}$($=M_{zI_{E}}$) is the bilateral shift on $L_{E}^{2}.$ We will also use
$B_{E}$ to denote this $M_{z}.$ The Toeplitz operator $T_{\Phi}=A$ is
characterized by the operator equation $T_{z}^{\ast}AT_{z}=A.$ Let $J$ be
defined on $L_{E}^{2}$ by
\[
Jf(z)=\overline{z}f(\overline{z}),f \in L_{E}^{2}.
\]
The $J$ maps $\overline{zH_{E}^{2}}$ onto $H_{E}^{2}$, and $J$ maps $H_{E}%
^{2}$ onto $\overline{zH_{E}^{2}}.$ Furthermore $J$ is a unitary operator,
\[
J^{\ast}=J,J^{2}=I,\text{ }JM_{z}^{\ast}=M_{z}J,\text{ }JQ=PJ,\text{ and
}JP=QJ.
\]
The Hankel operator $H_{\Phi}$ from $H_{E}^{2}$ into $H_{F}^{2}$ is defined by%
\[
H_{\Phi}h=JQ\left[  \Phi h\right]  =PJ\left[  \Phi h\right]  ,h \in
H_{E}^{2}.
\]
The Hankel operator $H_{\Phi}=A$ is characterized by the operator equation
$AT_{z}=T_{z}^{\ast}A$. It is easy to check that $H_{\Phi}^{\ast}%
=H_{\Phi(\overline{z})^{\ast}}.$ We emphasize again Toeplitz and Hankel
operators $T_{\Phi}$ and $H_{\Phi}$ could be between different spaces
$H_{E}^{2}$ and $H_{F}^{2}$ according to whether the symbol $\Phi\ $acts
between different spaces $E$ and $F.$

Let
\[
X=\left[
\begin{array}
[c]{cc}%
S_{E} & 0\\
0 & S_{F}^{\ast}%
\end{array}
\right]  =\left[
\begin{array}
[c]{cc}%
T_{z} & 0\\
0 & T_{z}^{\ast}%
\end{array}
\right]  :H_{E}^{2}\oplus H_{F}^{2}\rightarrow H_{E}^{2}\oplus H_{F}^{2}.
\]
We will study the invariant subspace of $X.$ The operator $X$ is a special
example of so called dual shifts of high multiplicity \cite{Gu}. When $\dim
E=\dim F=1,$ $X$ is a special example of scalar dual shifts \cite{DingY}
\cite{GuDual}. The invariant subspace problem of $X$ in scalar case was
initiated by C\^{a}mara and Ross \cite{CRoss} and stated as an open question.
The question was answered beautifully by Timotin \cite{Timotin}. Timotin's
approach uses the connection of regular factorizations of the scalar
characteristic function of $S_{E}\oplus S_{F}^{\ast}$ and the invariant
subspaces of $S_{E}\oplus S_{F}^{\ast}$ in Sz.-Nagy-Foias model theory
\cite{NFBK}. It seems difficult to extend Timotin's approach as it relies on
an explicit result on regular factorizations of a scalar characteristic
function as on Page 308 \cite{NFBK}. In this paper, we relate invariant
subspaces of $S_{E}\oplus S_{F}^{\ast}$ to invariant subspaces of the
bilateral shift $B_{E}\oplus B_{F}$ \cite{Henry}. Some examples of scalar dual
shifts are shown to be similar to the scalar bilateral shift in \cite{CRoss}.

Recall $\Theta\in H_{B(E_{1},E)}^{\infty}$ is left inner if $\Theta(z)^{\ast
}\Theta(z)=I_{E_{1}}$ a.e. $z \in\mathbb{T}$ for some $E_{1}\subset E, E_{1}
\neq\{0\}$. In addition, we say $\Theta$ is right extremal if $\Theta$ can not
be factored as $\Theta_{1} \Delta$ where $\Theta_{1} $ is left inner and
$\Delta$ is a non-constant square inner function (i.e. $\Delta\in H_{B(E_{1}%
)}^{\infty}$ is not constant valued, and $\Delta(z)^{\ast}\Delta
(z)=\Delta(z)\Delta(z)^{\ast}=I_{E_{1}}$ a.e. $z \in\mathbb{T}$). Let
$K_{\Theta} =H_{E}^{2}\ominus\Theta H_{E_{1}}^{2}$ denote the model space. The
celebrated Beurling-Lax-Halmos (BLH) Theorem \cite{Beurling} \cite{Lax}
\cite{Halmos} says an invariant subspace of $S_{E}$ is of the form $\Theta
H_{E_{1}}^{2},$ and consequently, an invariant subspace of $S_{E}^{\ast}$ is
of the form $K_{\Theta}.$ Let
\begin{equation}
\Psi(z)=\left[
\begin{array}
[c]{cc}%
C(z) & D(z)\\
A(z) & B(z)
\end{array}
\right]  :E\oplus F\rightarrow E\oplus F, \label{dephi}%
\end{equation}
where $A\in H_{B(E,F)}^{\infty},$ $B \in H_{B(F)}^{\infty},$ $C \in
L_{B(E)}^{\infty},$ $D \in L_{B(F,E)}^{\infty}.$ Let $\Lambda_{E\oplus
F}^{\infty}$ denote the set of all symbols $\Psi$ as above. Let $W_{\Psi}$ be
a bounded operator defined by
\begin{equation}
\label{wpsi}W_{\Psi}=\left[
\begin{array}
[c]{cc}%
H_{C}^{\ast} & T_{A}^{\ast}\\
H_{D}^{\ast} & T_{B}^{\ast}%
\end{array}
\right]  :H_{E}^{2}\oplus H_{F}^{2}\rightarrow H_{E}^{2}\oplus H_{F}^{2}.
\end{equation}
Note that $H_{C}$ and $H_{D}$ only depend on the anti-analytic parts $Q[C]$
and $Q[D].$ We have the following two characterizations for the invariant
subspaces of $S_{E}\oplus S_{F}^{\ast}$.

\begin{theorem}
\label{thm1} A closed subspace $N$ of $H_{E}^{2}\oplus H_{F}^{2}$ is an
invariant subspace of $S_{E}\oplus S_{F}^{\ast},$ if and only if
$N=\ker(W_{\Psi})\cap( \Theta H_{E_{1}}^{2}\oplus H_{F}^{2}) $ where $\Psi
\in\Lambda_{E\oplus F}^{\infty}$ is zero or partial isometry-valued (i.e.
$\Psi(z)^{\ast}\Psi(z)=I_{E_{0}}$ a.e. $z \in\mathbb{T}$ with $E_{0}\subset
E\oplus F$), and $\Theta\in H_{B(E_{1},E)}^{\infty}$ is zero or left inner and
right extremal.
\end{theorem}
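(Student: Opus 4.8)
The plan is to prove the two implications separately; the forward (sufficiency) implication is a short operator computation, while the converse is where the link to the bilateral shift and the Helson--Lowdenslager theorem does the work.

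\emph{Sufficiency.} Given $\Psi\in\Lambda_{E\oplus F}^{\infty}$ and $\Theta$ as in the statement, I would first check that $\ker W_\Psi$ is invariant under $X=S_E\oplus S_F^{\ast}$. This follows from an intertwining identity: since a Hankel operator is characterized by $H_\Phi T_z=T_z^{\ast}H_\Phi$ (so $H_\Phi^{\ast}T_z=T_z^{\ast}H_\Phi^{\ast}$ after taking adjoints) and an analytic Toeplitz operator commutes with $T_z$ (so $T_\Phi^{\ast}T_z^{\ast}=T_z^{\ast}T_\Phi^{\ast}$), a block computation gives
\[
W_\Psi X=(T_z^{\ast}\oplus T_z^{\ast})\,W_\Psi ,
\]
whence $W_\Psi v=0$ forces $W_\Psi Xv=0$. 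As $\Theta H_{E_1}^{2}\oplus H_F^{2}$ is visibly $X$-invariant ($T_z\Theta H_{E_1}^{2}=\Theta T_zH_{E_1}^{2}\subseteq\Theta H_{E_1}^{2}$ because $\Theta$ is analytic, and $T_z^{\ast}H_F^{2}\subseteq H_F^{2}$), the intersection $N=\ker W_\Psi\cap(\Theta H_{E_1}^{2}\oplus H_F^{2})$ is $X$-invariant. Note this direction uses neither the partial-isometry normalization of $\Psi$ nor the right-extremal condition on $\Theta$; those hypotheses are what make the representation in the converse direction canonical and the correspondence bijective.

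\emph{Necessity.} Given an $X$-invariant subspace $N$, the plan is to pass to a $(B_E\oplus B_F)$-invariant subspace of $L_E^{2}\oplus L_F^{2}$, apply the Helson--Lowdenslager classification, and translate back. Concretely, $S_E=B_E|_{H_E^{2}}$ is a restriction of the bilateral shift, while, via the flip $J$ and the relation $JM_z^{\ast}=M_zJ$, the backward shift $S_F^{\ast}$ is unitarily equivalent to a compression of $B_F$; thus $X$ (equivalently $X^{\ast}$) sits inside $B_E\oplus B_F$ as a restriction/compression. To $N$ I would attach $\mathcal{M}$, the smallest $(B_E\oplus B_F)$-invariant subspace of $L_E^{2}\oplus L_F^{2}$ whose compression back to $H_E^{2}\oplus H_F^{2}$ returns $N$, and establish that $N\mapsto\mathcal{M}$ is a bijection onto the class of $(B_E\oplus B_F)$-invariant subspaces singled out by exactness of this compression; this is the step encoding the one-to-one correspondence advertised in the abstract.

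Then I would invoke the Helson--Lowdenslager description of invariant subspaces of $B_E\oplus B_F$: each is the orthogonal sum of a part determined by a measurable range function, which I would package as a partial-isometry-valued symbol, and a Beurling--Lax--Halmos part $\Omega H^{2}$. Unwinding the restriction/compression, the range-function data becomes the symbol $\Psi\in\Lambda_{E\oplus F}^{\infty}$ and the analytic part becomes the inner factor $\Theta$: the pointwise condition ``$f(z)$ lies in the prescribed subspace a.e.'' becomes ``$W_\Psi f=0$'', and the Beurling part becomes ``$f\in\Theta H_{E_1}^{2}\oplus H_F^{2}$''. Here the identities $H_\Phi^{\ast}=H_{\Phi(\overline{z})^{\ast}}$, $JP=QJ$, $JM_z^{\ast}=M_zJ$ are exactly what is needed to recognize the compressed multiplication operator as $W_\Psi$, with the Hankel blocks in the correct positions. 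Finally one performs the normalizations: any square-inner right factor of the naive left-inner factor is peeled off and absorbed into the range-function data (this forces $\Theta$ to be right extremal), and $\Psi$ is rescaled to be partial-isometry-valued; minimality of $\mathcal{M}$ shows both normalizations are available and that the resulting pair is essentially unique.

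\emph{Main obstacle.} The crux is the bijectivity of $N\leftrightarrow\mathcal{M}$: proving that the compression of $\mathcal{M}$ recovers $N$ exactly (a tightness/lifting statement for the dilation of $X$) and pinning down precisely which $(B_E\oplus B_F)$-invariant subspaces arise, together with verifying that the Helson--Lowdenslager data can always be put into the normalized $(\Psi,\Theta)$ form of the statement. By comparison, translating the pointwise range condition into the equation $W_\Psi f=0$ via the $J$-identities is routine, but it must be done carefully in order to land the Hankel rather than Toeplitz blocks in the first column of $W_\Psi$.
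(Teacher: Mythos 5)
Your overall strategy (pass to the bilateral shift, invoke Helson--Lowdenslager, translate back) is the one the paper uses, and your sufficiency argument is fine. But the necessity half has a genuine gap precisely at the step you yourself flag as the crux. You propose to attach to $N$ the ``smallest $(B_E\oplus B_F)$-invariant subspace $\mathcal{M}$ whose compression returns $N$.'' As formulated this is not a workable construction: the compression of a bilateral-shift-invariant subspace to $H_E^2\oplus H_F^2$ need not be closed or $X$-invariant, a minimal such $\mathcal{M}$ need not exist or be unique, and for the $S_F^{\ast}$-summand the smallest $B_F$-invariant subspace generated by the data will typically compress to something strictly larger than the second component of $N$. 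The paper's mechanism is different and makes this step elementary: one observes that an $X$-invariant $N$ is automatically invariant under $M_z\oplus T_z^{\ast}$ on $L_E^2\oplus H_F^2$, applies the unitary flip $J_2=J_1\oplus I$ (which conjugates $M_z^{\ast}$ to $M_z$), and then passes to the \emph{orthogonal complement} inside $L_E^2\oplus H_F^2$. The upshot is an exact bijection $N\leftrightarrow N_3$ with $N=(L_E^2\oplus H_F^2)\ominus J_2N_3$, where $N_3$ ranges over the $(M_z\oplus M_z)$-invariant subspaces satisfying $zH_E^2\subset N_3\subset L_E^2\oplus H_F^2$. No tightness or lifting statement is needed; without some such replacement your plan does not get off the ground.

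There is a second, more concrete error in your translation step: you have the two Helson--Lowdenslager summands playing the wrong roles. Writing $N_3=UH_{E_0}^2\oplus M_K$, the condition $f\oplus g\perp J_2(UH_{E_0}^2)$, i.e.\ $P\bigl[U^{\ast}(f(\overline z)\oplus g)\bigr]=0$, is what unwinds (via $JP=QJ$ and $H_{\Phi}^{\ast}=H_{\Phi(\overline z)^{\ast}}$) into the mixed Hankel--Toeplitz equation $W_\Psi(f\oplus g)=0$; so it is the \emph{simply invariant} (Beurling) part that produces $\Psi$. The pointwise range-function condition $f(\overline z)\perp K(z)$ a.e., restricted to $H_E^2$, cuts out a shift-invariant subspace of $H_E^2$, and BLH gives the factor $\Theta H_{E_1}^2$; so the \emph{doubly invariant} part produces $\Theta$, the opposite of what you wrote. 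This matters for the final normalization as well: right-extremality of $\Theta$ is not achieved by ``peeling off a square inner factor and absorbing it into the range-function data''; it is automatic, because $\Theta H_{E_1}^2$ is by construction the full orthocomplement $H_E^2\cap M_{K(\overline z)}^{\perp}$, and a factorization $\Theta=\Theta_1\Delta$ with $\Delta$ a non-constant square inner function would force $\Theta_1H_{E_1}^2$ to be a strictly larger shift-invariant subspace still orthogonal to $M_{K(\overline z)}$, a contradiction. Similarly, the partial-isometry normalization of $\Psi$ is inherited directly from $U$ being isometry-valued in the Helson--Lowdenslager decomposition, not obtained by rescaling.
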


By taking "adjoint" of the above theorem, we can use the ranges of $V_{\Phi}$
to express the invariant subspaces of $S_{E}\oplus S_{F}^{\ast}$. Let
\begin{equation}
\Phi(z) =\left[
\begin{array}
[c]{cc}%
A(z) & B(z)\\
C(z) & D(z)
\end{array}
\right]  :E\oplus F\rightarrow E\oplus F, \label{defphi1}%
\end{equation}
where $A \in H_{B(E)}^{\infty},$ $B \in H_{B(F,E)}^{\infty},$ $C \in
L_{B(E,F)}^{\infty},$ $D \in L_{B(F)}^{\infty}.$ Let $\Gamma_{E\oplus
F}^{\infty}$ denote the set of all symbols $\Phi$ as above. Let $V_{\Phi}$ be
a bounded operator defined by
\begin{equation}
V_{\Phi}=\left[
\begin{array}
[c]{cc}%
T_{A} & T_{B}\\
H_{C} & H_{D}%
\end{array}
\right]  :H_{E}^{2}\oplus H_{F}^{2}\rightarrow H_{E}^{2}\oplus H_{F}^{2}.
\label{defvfphi}%
\end{equation}
Note that $V_{\Phi}$ is essentially $W_{\Psi}^{\ast}.$

\begin{theorem}
\label{thm2} A closed subspace $N$ of $H_{E}^{2}\oplus H_{F}^{2}$ is an
invariant subspace of $S_{E}\oplus S_{F}^{*},$ if and only if $N=Span\left\{
R(V_{\Phi}),K_{\Theta}\right\}  $ (the closed linear span of $R(V_{\Phi})$ and
$K_{\Theta}$), where $\Phi\in\Gamma_{E\oplus F}^{\infty}$ is zero or partial
isometry-valued, $\Theta\in H_{B(F_{1},F)}^{\infty}$ with $F_{1}\subset F$,
$\Theta$ being zero or left inner and right extremal, and $K_{\Theta}=
H^{2}_{F} \ominus\Theta H^{2}_{F_{1}}$.
\end{theorem}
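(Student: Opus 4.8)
The plan is to deduce Theorem \ref{thm2} from Theorem \ref{thm1} by a duality/adjoint argument, exploiting the observation (already recorded in the excerpt) that $V_\Phi$ is essentially $W_\Psi^\ast$. First I would make precise the correspondence between the two ``halves'' of the operators: passing from $\Psi=\left[\begin{smallmatrix}C&D\\A&B\end{smallmatrix}\right]$ to its adjoint-type symbol. Using $H_\Phi^\ast=H_{\Phi(\bar z)^\ast}$ and $T_\Phi^\ast=T_{\Phi(\bar z)^\ast}$ (more precisely $T_\Phi^\ast = T_{\Phi^\ast}$ in the appropriate sense for block Toeplitz operators), one checks that $W_\Psi^\ast$ has exactly the block form $V_\Phi$ displayed in \eqref{defvfphi} for the symbol $\Phi$ whose blocks are the conjugate-reflected blocks of $\Psi$, and that $\Psi$ is partial-isometry-valued if and only if the corresponding $\Phi$ is. Then $\ker(W_\Psi)^\perp = \overline{R(W_\Psi^\ast)} = \overline{R(V_\Phi)}$.

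Next I would translate the remaining ingredient. In Theorem \ref{thm1} the invariant subspace is $N=\ker(W_\Psi)\cap(\Theta H_{E_1}^2\oplus H_F^2)$; orthogonal complements convert intersection into closed span, so $N^\perp = \overline{\operatorname{Span}}\{\,\overline{R(V_\Phi)},\ (\Theta H_{E_1}^2\oplus H_F^2)^\perp\,\}$. The key point is that the orthogonal complement of $\Theta H_{E_1}^2$ inside $H_E^2$ is precisely the model space $K_\Theta$, and the complement of $H_F^2$ in $H_F^2$ is $\{0\}$, so $(\Theta H_{E_1}^2\oplus H_F^2)^\perp = K_\Theta\oplus\{0\}$ sitting inside $H_E^2\oplus H_F^2$. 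Now I invoke the duality between invariant subspaces: $N$ is invariant for $S_E\oplus S_F^\ast$ iff $N^\perp$ is invariant for $(S_E\oplus S_F^\ast)^\ast = S_E^\ast\oplus S_F$. Swapping the roles of the two summands (i.e. conjugating by the flip $H_E^2\oplus H_F^2\to H_F^2\oplus H_E^2$), a subspace is invariant for $S_E^\ast\oplus S_F$ iff its flip is invariant for $S_F\oplus S_E^\ast$, which is an operator of exactly the same type as $X$ but with the roles of $E$ and $F$ interchanged. Applying Theorem \ref{thm1} to this flipped operator, or rather reading off its conclusion, produces precisely the span description in Theorem \ref{thm2}, with the model space now $K_\Theta\subset H_F^2$ (hence $\Theta\in H_{B(F_1,F)}^\infty$ left inner and right extremal) and the range term $R(V_\Phi)$ with $\Phi\in\Gamma_{E\oplus F}^\infty$ partial-isometry-valued.

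A couple of bookkeeping steps deserve care. One must verify that the ``zero or'' alternatives are preserved under all these manipulations: the zero symbol $\Psi$ corresponds to the zero symbol $\Phi$, and $\Theta=0$ (meaning $\Theta H_{E_1}^2=\{0\}$, i.e.\ $E_1=\{0\}$) corresponds under complementation to $K_\Theta=H_F^2$, which is consistent with the statement. One must also confirm that taking orthogonal complements genuinely exchanges the hypothesis ``$\Theta$ left inner and right extremal'' with the same hypothesis on the flipped side — this is symmetric, since the class of model spaces $K_\Theta$ with $\Theta$ left inner and right extremal is self-referential and does not change under passing between $H_E^2$ and $H_F^2$. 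Finally, because $R(V_\Phi)$ need not be closed, I would be careful to write $N = \overline{\operatorname{Span}}\{R(V_\Phi),K_\Theta\}$ and to note that the closure is harmless: $N^\perp = \ker(V_\Phi^\ast)\cap K_\Theta^\perp$ is automatically closed, so its complement is closed and coincides with the closed span.

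The main obstacle, and the step I expect to require the most care, is the precise identification $W_\Psi^\ast = V_\Phi$ at the level of symbols in the vector-valued block setting: one must pin down how $Q[C]$, $Q[D]$ (the anti-analytic parts on which the Hankel operators depend) transform under conjugate-reflection $z\mapsto\bar z$ and adjoint, confirm that a partial-isometry-valued $\Psi$ yields a partial-isometry-valued $\Phi$ with the correct block slots ($A\in H_{B(E)}^\infty$, $B\in H_{B(F,E)}^\infty$, $C\in L_{B(E,F)}^\infty$, $D\in L_{B(F)}^\infty$ as in \eqref{defphi1}), and check that the constraint $\Psi(z)^\ast\Psi(z)=I_{E_0}$ translates to the analogous constraint for $\Phi$. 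Once this symbol dictionary is established, the rest is a formal application of Theorem \ref{thm1} together with the elementary facts $M^{\perp\perp}=M$, $(\,\overline{R(T)}\,)^\perp=\ker(T^\ast)$, and the duality between invariant subspaces of an operator and of its adjoint.
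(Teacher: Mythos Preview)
Your strategy is exactly the paper's: take orthogonal complements in Theorem~\ref{thm1} (this is the paper's Lemma~\ref{mainkr}, giving a span description for invariant subspaces of $S_E^*\oplus S_F$) and then conjugate by the coordinate flip $U:H_E^2\oplus H_F^2\to H_F^2\oplus H_E^2$ to interchange the roles of $E$ and $F$.

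One point needs correction, however. You assert that $W_\Psi^*$ has exactly the block form of $V_\Phi$, but from \eqref{wpsi} one computes
\[
W_\Psi^*=\begin{bmatrix}H_C&H_D\\T_A&T_B\end{bmatrix},
\]
which has the Hankel row on top and the Toeplitz row on the bottom --- the opposite of \eqref{defvfphi}. So the identification $\ker(W_\Psi)^\perp=\overline{R(V_\Phi)}$ is false as stated; what is true is $\ker(W_\Psi)^\perp=\overline{R(W_\Psi^*)}$. It is precisely the flip that repairs this: $UW_\Psi^*U^*$ is a genuine $V_\Phi$ on $H_F^2\oplus H_E^2$ with symbol $\Phi=U_0\Psi U_0^{-1}$, and the paper records this as $UR(W_\Psi^*)=R(V_\Phi)$. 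Thus the ``symbol dictionary'' you anticipate working out is not a block-by-block conjugate-reflection but simply this conjugation by the flip, and once you insert it in the right place your argument coincides with the paper's.
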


We will show in Section 2 that both $\ker(W_{\Psi})$ and $\overline{R(V_{\Phi
})}$ are invariant subspaces of $S_{E} \oplus S_{F}^{*}$.

Our approach also gives a different proof of the "scalar" case where $\dim
E=\dim F=1$ by Timotin \cite{Timotin}. In this case, $S_{E}\oplus S_{F}^{*}$
has the splitting invariant subspaces of the form $\theta_{1} H^{2}\oplus
K_{\theta_{2}}$ with $\theta_{1}, \theta_{2}$ inner functions or zero. When
$\dim E>1$ or $\dim F>1,$ it is natural to introduce the following more
general splitting invariant subspaces of $S_{E}\oplus S_{F}^{\ast}$. In order
to understand the invariant subspaces of $S_{E}\oplus S_{F}^{\ast},$ we can
focus on non-splitting invariant subspaces.

\begin{definition}
An invariant subspace $N$ of $S_{E}\oplus S_{F}^{\ast}$ is called splitting if
there exist direct decompositions $E=E_{1}\oplus E_{2},F=F_{1}\oplus F_{2},$
where both $S_{E_{1}}\oplus S_{F_{1}}^{\ast}$ and $S_{E_{2}}\oplus S_{F_{2}%
}^{\ast}$ are not zero, and a unitary operator $V\in B(E\oplus F)$ such that
\[
S_{E}\oplus S_{F}^{\ast}=V(S_{E_{1}}\oplus S_{F_{1}}^{\ast}\oplus S_{E_{2}%
}\oplus S_{F_{2}}^{\ast})V^{\ast},N=V(N_{1}\oplus N_{2}),
\]
where $N_{i}$ is an invariant subspace of $S_{E_{i}}\oplus S_{F_{i}}^{\ast}$
for $i=1,2.$
\end{definition}

For example, let $E_{1} \subsetneq E$, then the invariant subspaces of
$S_{E_{1}} \oplus S_{F}^{*}$ are splitting invariant subspaces of $S_{E}\oplus
S_{F}^{\ast}$. Also $\Theta H^{2}_{E_{0}} \oplus H^{2}_{F}$ is splitting,
where $E_{0} \subset E, \Theta\in H^{\infty}_{B(E_{0},E)}$ left inner, since
$\Theta H^{2}_{E_{0}}$ is an invariant subspace of $S_{E}\oplus0$ on
$H^{2}_{E} \oplus\{0\}$, and $H^{2}_{F}$ is an invariant subspace of $0\oplus
S_{F}^{*}$ on $\{0\} \oplus H^{2}_{F}$.

The following theorem was obtained by Timotin \cite{Timotin}, we formulate it
in a slightly different way.

\begin{theorem}
\label{Timotinthm}\cite{Timotin}Assume $\dim E=\dim F=1.$ Then $N$ is a
non-splitting invariant subspace of $S_{E}\oplus S_{F}^{\ast},$ if and only
if
\begin{equation}
N=R(V_{\Phi_{1}}),\text{ }V_{\Phi_{1}}=\left[
\begin{array}
[c]{cc}%
T_{a } & T_{b }\\
H_{c(\overline{z})} & H_{d(\overline{z})}%
\end{array}
\right]  ,\Phi_{1}(z)=\left[
\begin{array}
[c]{cc}%
a(z) & b(z)\\
c(\overline{z}) & d(\overline{z})
\end{array}
\right]  , \label{v1}%
\end{equation}
where $a,b,c,d\in H^{\infty}$, $a$ and $b$ are not proportional, and $\Phi
_{1}(z)^{\ast}\Phi_{1}(z)=I$ a.e. $z \in\mathbb{T}$.
\end{theorem}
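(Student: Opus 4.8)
The plan is to deduce this scalar statement as a special case of Theorems \ref{thm1} and \ref{thm2}, rather than reproving Timotin's regular-factorization argument. First I would record what the general theorems say when $\dim E=\dim F=1$. In Theorem \ref{thm2} the inner function $\Theta\in H^\infty_{B(F_1,F)}$ with $F_1\subset F$ is forced, since $\dim F=1$, to be either zero or a genuine scalar inner function $\theta_2$; in the latter case $K_\Theta=K_{\theta_2}$ is a nonzero model space and $\{0\}\oplus K_{\theta_2}$ is already $S_E\oplus S_F^*$-invariant, contributing a splitting summand. Likewise the "right extremal" condition in the scalar case says $\theta_2$ cannot be written as $\theta_1\Delta$ with $\Delta$ a nonconstant (scalar) inner function — but every nonconstant scalar inner function $\theta_2$ factors that way (take $\theta_1=1$, $\Delta=\theta_2$), so right extremality forces $\Theta=0$. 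Hence for $N$ to be \emph{non-splitting} we must have $\Theta=0$ in the representation of Theorem \ref{thm2}, giving $N=\overline{R(V_\Phi)}$ with $\Phi\in\Gamma^\infty_{E\oplus F}$ zero or partial isometry-valued; since in the scalar case "partial isometry-valued" for a $2\times2$ matrix function $\Phi(z)$ means $\Phi(z)^*\Phi(z)$ is a projection, and a nonzero such $N$ needs $\Phi\ne 0$, we get $\Phi(z)^*\Phi(z)=I$ a.e. (the rank-one case would force $N$ to split into a shift-invariant part and a backward-shift-invariant part). This already produces the form \eqref{v1} with the entries of the lower row being the anti-analytic symbols $c(\bar z),d(\bar z)$, because a scalar Hankel operator $H_\phi$ depends only on $Q[\phi]$ and $H_{c(\bar z)}$ is the standard way to write a Hankel operator with a prescribed analytic "co-symbol" $c\in H^\infty$.

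Next I would address the genuinely scalar-specific refinements: that $a,b,c,d$ may be taken in $H^\infty$, that "not proportional" is the exact non-splitting condition, and that $\overline{R(V_\Phi)}=R(V_{\Phi_1})$ is actually closed. For the first point, the general $\Phi\in\Gamma^\infty_{E\oplus F}$ has $A\in H^\infty_{B(E)}$, $B\in H^\infty_{B(F,E)}$ but $C\in L^\infty_{B(E,F)}$, $D\in L^\infty_{B(F)}$; in the scalar case $A=a,B=b$ are automatically in $H^\infty$, and $C,D\in L^\infty$, but since $H_C,H_D$ see only the anti-analytic parts, I can replace $C$ by $\overline{z}\,\overline{c}$ for some $c\in H^\infty$ (and similarly $D$), absorbing the rest; matching this to the partial-isometry normalization is what rewrites $C(z)$ as $c(\bar z)$ with $c\in H^\infty$. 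For the second point I would show: $N=R(V_{\Phi_1})$ splits iff, after the unitary change of variables in the Definition, the $2\times1$ column $[a\ ;\ c(\bar z)]^T$ and $[b\ ;\ d(\bar z)]^T$ span a one-dimensional (rather than two-dimensional) space pointwise, which in the scalar setting is equivalent to $a,b$ being proportional — one direction is the example already noted after the Definition ($\theta_1H^2\oplus K_{\theta_2}$ is split), the other is a direct computation using that $V_{\Phi_1}$ is a partial isometry so its range is closed and equals $V_{\Phi_1}(H^2\oplus H^2)$. That last observation, $V_{\Phi_1}V_{\Phi_1}^*$ being a projection, also settles closedness and lets me drop the closure bar.

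The main obstacle I anticipate is the bookkeeping in the splitting/non-splitting dichotomy: translating the abstract condition "there exist $E=E_1\oplus E_2$, $F=F_1\oplus F_2$ and a unitary $V$ implementing the splitting" into the concrete statement "$a$ and $b$ are proportional" requires knowing that the only unitaries $V\in B(E\oplus F)$ that commute with $S_E\oplus S_F^*$ up to the prescribed block form are, in the scalar case, scalar rotations mixing the two coordinates only trivially — so the decomposition must already be visible in the given $\Phi_1$. I would handle this by noting that a unitary $V$ intertwining $S_E\oplus S_F^*$ with $S_{E_1}\oplus S_{F_1}^*\oplus S_{E_2}\oplus S_{F_2}^*$ must be a constant (operator-valued) unitary — it commutes with $M_z\oplus M_z$ after compressing — and in the scalar case $E\oplus F\cong\mathbb{C}^2$ there is essentially no room: either $N$ contains a nonzero vector from $H^2_E\oplus\{0\}$ annihilated by nothing, forcing a shift summand, or from $\{0\}\oplus H^2_F$, forcing a backward-shift summand, and unwinding $N=R(V_{\Phi_1})$ shows this happens exactly when the two analytic columns are dependent, i.e. $a\parallel b$. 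Once that dichotomy is pinned down, the forward implication (non-splitting $N$ has the form \eqref{v1}) follows from Theorem \ref{thm2} and the reverse implication (\eqref{v1} always gives an invariant, non-splitting subspace) is the easy direction, combining the Section 2 fact that $\overline{R(V_\Phi)}$ is $S_E\oplus S_F^*$-invariant with the proportionality criterion.
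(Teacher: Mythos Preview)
Your approach differs from the paper's and has a genuine gap. The paper does \emph{not} derive Theorem~\ref{Timotinthm} from Theorem~\ref{thm2} as a black box; it goes back to the bilateral-shift correspondence $N\leftrightarrow N_3$ of Theorem~\ref{main} and does a case split on the Helson--Lowdenslager structure of $N_3$. When $\dim E=\dim F=1$ the type~II case is dispatched by Corollary~\ref{costant1} (forcing $N_3\supset L^2_E$, hence $N$ splitting), and within type~I Proposition~\ref{ab1} forces either $\dim E_0=1$ (then $C=0$ and $N$ splits) or $\dim E_0=2$ with $U$ unitary-valued. Only in that last case does Theorem~\ref{mainr} apply and give $N=R(V_{\Phi_1})$ with $\Phi_1$ unitary-valued, $a,b,c,d\in H^\infty$, and $V_{\Phi_1}$ a partial isometry \emph{as an operator} (via Lemma~\ref{partial1}). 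The splitting criterion ``$a,b$ proportional'' is then read off directly from the uniqueness clause of Theorem~\ref{Helson}.

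The specific failure in your plan is the claim that ``the rank-one case would force $N$ to split.'' This is false: take $\Phi(z)=\tfrac{1}{\sqrt2}\begin{bmatrix} z\\ \bar z\end{bmatrix}$, so $\Phi^*\Phi=1$ on a one-dimensional $E_0$; then $R(V_\Phi)=\{(zh,\,h(0)):h\in H^2\}$ is \emph{not} of the form $\theta_1H^2\oplus K_{\theta_2}$, hence non-splitting. So from Theorem~\ref{thm2} alone you cannot upgrade a partial-isometry-valued $\Phi$ to a unitary-valued one. You also reversed the $\Theta$ dichotomy: scalar right-extremality forces $\Theta$ to be a unimodular \emph{constant} (giving $K_\Theta=\{0\}$), not $\Theta=0$ (which gives $K_\Theta=H^2_F$ and then $N\supset\{0\}\oplus H^2_F$ actually \emph{forces} splitting). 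Finally, your symbol-replacement argument for $c,d\in H^\infty$ does not respect the constraint $\Phi^*\Phi=I$; in the paper this analyticity comes for free from Proposition~\ref{full} applied to $N_3$, not from massaging $\Phi$ after the fact. The paper even flags this obstruction in the introduction: ``it still take[s] some efforts to obtain Theorem~\ref{Timotinthm} \ldots\ from the general Theorem~\ref{thm2}.''
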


\begin{theorem}
\label{mandonerange2} Assume $\dim E <\infty$ and $\dim F=1.$ If $N$ is a
non-splitting invariant subspace of $S_{E}\oplus S_{F}^{\ast}$, then there is
a partial isometry $V_{\Phi}$ such that $N=R(V_{\Phi})$.
\end{theorem}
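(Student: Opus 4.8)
The plan is to start from the general representation in Theorem~\ref{thm2} and then show that, under the hypotheses $\dim E<\infty$, $\dim F=1$ together with non-splitting of $N$, the model-space summand disappears and the resulting range is closed. By Theorem~\ref{thm2} we may write $N=\mathrm{Span}\{R(V_{\Phi_0}),K_{\Theta}\}$ with $\Phi_0\in\Gamma_{E\oplus F}^{\infty}$ zero or partial isometry-valued, and $\Theta\in H_{B(F_1,F)}^{\infty}$, $F_1\subset F$, zero or left inner and right extremal, $K_{\Theta}=H_F^{2}\ominus\Theta H_{F_1}^{2}$. The first point is that $\dim F=1$ collapses the model space. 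Indeed $F_1$ is either $\{0\}$ or $F$. If $F_1=\{0\}$ then $\Theta=0$ and $K_{\Theta}=H_F^{2}$. If $F_1=F$ then $\Theta$ is a scalar-valued function with $|\Theta(z)|=1$ a.e., i.e.\ a scalar inner function; if it were non-constant it would factor as $\Theta=1\cdot\Theta$ with the unimodular constant $1$ left inner and $\Theta$ itself a non-constant square inner function, contradicting right extremality, so $\Theta$ is a unimodular constant and $K_{\Theta}=H_F^{2}\ominus\Theta H_F^{2}=\{0\}$. Hence $K_{\Theta}\in\{\{0\},H_F^{2}\}$.

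Next I would eliminate the case $K_{\Theta}=H_F^{2}$ using non-splitting. If $\{0\}\oplus H_F^{2}\subseteq N$, then for every $(f,g)\in N$ we get $(f,0)=(f,g)-(0,g)\in N$, so $N=M\oplus H_F^{2}$ where $M=\{f\in H_E^{2}:(f,0)\in N\}$ is a closed $S_E$-invariant subspace of $H_E^{2}$. Taking $E_1=\{0\}$, $E_2=E$, $F_1=F$, $F_2=\{0\}$, the coordinate-swap unitary $V\colon H_F^{2}\oplus H_E^{2}\to H_E^{2}\oplus H_F^{2}$, $N_1=H_F^{2}$ invariant for $0\oplus S_F^{*}$ and $N_2=M$ invariant for $S_E\oplus 0$, we see that $N=V(N_1\oplus N_2)$ is splitting (both $0\oplus S_F^{*}$ and $S_E\oplus 0$ are nonzero because $\dim F\geq 1$ and $\dim E\geq 1$), contrary to hypothesis. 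Therefore $K_{\Theta}=\{0\}$ and $N=\mathrm{Span}\{R(V_{\Phi_0})\}=\overline{R(V_{\Phi_0})}$ with $\Phi_0$ partial isometry-valued (and $\Phi_0\neq 0$, since $N=\{0\}$ is itself splitting).

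Finally one must upgrade this to $N=R(V_{\Phi})$ for an honest partial isometry $V_{\Phi}$, and this is where I expect the real work --- and the use of $\dim E<\infty$ --- to lie. The cleanest route is to appeal to the analysis of $W_{\Psi}$ and $V_{\Phi}$ carried out before this point: since $V_{\Phi_0}$ is, up to the coordinate swaps relating it to some $W_{\Psi}^{\ast}$, built from a partial isometry-valued symbol, one shows $V_{\Phi_0}$ is itself a partial isometry, so $R(V_{\Phi_0})$ is automatically closed and $N=R(V_{\Phi_0})$. If instead $R(V_{\Phi_0})$ can in principle fail to be closed, the plan is to argue that this failure is governed by a square-inner (or left-inner) obstruction supported on a summand of the \emph{finite-dimensional} space $E\oplus F$, and that were this obstruction nontrivial it would produce a splitting of $N$ exactly as in the previous paragraph; non-splitting then forces the obstruction to be trivial, $R(V_{\Phi_0})$ to be closed, and $N=R(V_{\Phi_0})$. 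The main obstacle is thus precisely this closedness/partial-isometry step, which the Hankel blocks of $V_{\Phi}$ make nontrivial and for which the hypotheses $\dim E<\infty$ and $\dim F=1$ are needed to exclude the genuine matrix-inner obstructions present when $\dim F>1$.
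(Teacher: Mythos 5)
Your first two steps are fine: with $\dim F=1$, right extremality does collapse $K_{\Theta}$ to $\{0\}$ or $H_F^{2}$, and the non-splitting hypothesis does rule out $\{0\}\oplus H_F^{2}\subseteq N$ by exactly the swap argument you give. But these are the easy reductions; the entire content of the theorem lies in the step you defer to the end, and there you have a genuine gap rather than a proof. Knowing that $N=\overline{R(V_{\Phi_0})}$ for some $\Phi_0$ with \emph{partial isometry-valued symbol} does not get you close to knowing that some $V_{\Phi}$ is a \emph{partial isometry operator} with $N=R(V_{\Phi})$: these are very different conditions, and the paper explicitly flags the question of when $V_{\Phi}$ is a partial isometry (Question \ref{pisop}) as difficult, precisely because of the Hankel blocks. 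Your suggestion that ``one shows $V_{\Phi_0}$ is itself a partial isometry'' since its symbol is partial isometry-valued is not true in general, and your fallback --- that any failure of closedness is ``governed by a square-inner obstruction'' which non-splitting kills --- is a hope, not an argument; you give no mechanism producing such an obstruction or converting it into a splitting.

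The paper does not prove this theorem by massaging the output of Theorem \ref{thm2} at all. It goes back to the bilateral-shift model: $N$ corresponds to $N_3=UH_{E_0}^{2}\oplus M_K$ via Theorem \ref{main}, and the proof of Theorem \ref{mandonerange} runs a case analysis on $\dim E_0$ and on whether $K=0$. The finite-dimensionality hypotheses enter through the rank lemmas (Lemma \ref{fixed}, Proposition \ref{type2condition}), which in the non-splitting cases force $\mathrm{rank}\,C(z)=\dim F=1$, hence $C(z)C(z)^{\ast}=I_F$; Lemma \ref{partial1}(ii) then delivers that the explicitly constructed $V_{\Phi}$ is a partial isometry, and in the type II case one additionally needs the factorization $A=\Theta\Gamma$ from Theorem \ref{type2k} together with a rank count showing the reduced symbol $\Psi_1$ is unitary-valued (Lemma \ref{rangetypeii}). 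None of this machinery is replaced by anything in your proposal, so as it stands the key assertion --- existence of a partial isometry $V_{\Phi}$ with $N=R(V_{\Phi})$ exactly, not just as a closure --- remains unproved.
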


When $\dim E=1$, see Theorem \ref{typeiikernel} for the description of
invariant subspaces of $S_{E}\oplus S_{F}^{\ast}$ in terms of $\ker(W_{\Psi}%
)$. Note that in Theorem \ref{Timotinthm} and Theorem \ref{mandonerange2}, no
closed linear span is needed, so it\ still take some efforts to obtain Theorem
\ref{Timotinthm} and Theorem \ref{mandonerange2} from the general Theorem
\ref{thm2}. Indeed, there are several questions await future work. For
example, how to characterize invariant subspace $N$ of $N=R(V_{\Phi})$? Can
$N$ be just the linear span of $R(V_{\Phi})$ and $K_{\Theta}$? For given two
invariant subspaces $N_{1}=\ker(W_{\Psi_{1}})\cap( \Theta_{1} H_{E_{1}}%
^{2}\oplus H_{F}^{2}) $ and $N_{2}=\ker(W_{\Psi_{2}})\cap( \Theta_{2}
H_{E_{2}}^{2}\oplus H_{F}^{2}) ,$ if $N_{1}=N_{2}$ what is the relation
between $\left\{  \Psi_{1},\Theta_{1} \right\}  $ and $\left\{  \Psi
_{1},\Theta_{2} \right\}  $? These questions and other related operator and
function theoretic questions are touched upon in this paper and deserved to be
studied further.

Now we outline the paper. In Section 2, we discuss some basic properties of
$W_{\Psi}$ and $V_{\Phi}.$ These operators are mixed Hankel and Toeplitz type
operators, so they are interesting in their own right because Hankel and
Toeplitz operators are important and have wide applications \cite{BS2}
\cite{Pe}. By using commutant lifting theorem \cite{NFBK}, we compute the
norms of $W_{\Psi}$ and $V_{\Phi}$ as distance problems which is a slight
generalization of Nehari's theorem \cite{Nehari}. We also discuss the kernels
and ranges of $W_{\Psi}$ and $V_{\Phi}$ as they are related to the invariant
subspaces of $S_{E}\oplus S_{F}^{\ast}$.

In Section 3, we recall the invariant subspace theorem for the bilateral
shifts $B_{E}$ by Helson and Lowdenslager \cite{Henry} \cite{HelsonL}
\cite{Srini1}. We then establish a one-to-one correspondence between the
invariant subspaces of $S_{E}\oplus S_{F}^{\ast}$ and a special class of
invariant subspaces of bilateral shift $B_{E}\oplus B_{F}.$ Since there are
two types of invariant subspaces of $B_{E}\oplus B_{F},$ one is simply
invariant and another is doubly invariant, they naturally lead to two types
of invariant subspaces of $S_{E}\oplus S_{F}^{\ast}$ which we call type I and
type II. In Section 4, we first express the type I invariant subspaces of
$S_{E}\oplus S_{F}^{\ast}$ as $\ker(W_{\Psi})$ for some $\Psi\in
\Lambda_{E\oplus F}^{\infty}$, then we prove Theorem \ref{thm1}. By using the
unitary equivalence of $S_{E}^{\ast}\oplus S_{F}$ and $S_{F}\oplus S_{E}%
^{\ast}$, we then extablish Theorem \ref{thm2}. In Section 5, we give a more
detailed analysis of the cases $\dim E = 1, \dim F < \infty$ and $\dim
E<\infty, \dim F =1$ which proves Theorems \ref{Timotinthm} and
\ref{mandonerange2}.

In this paper, for two vector-valued or operator-valued functions $F$ and $G,$
$F(z)=G(z)$ for $z\in\mathbb{T}$ means $F(z)=G(z)$ for a.e. $z\in\mathbb{T}$.

\section{Examples of invariant subspaces of $S_{E}\oplus S_{F}^{\ast}.$}

Let
\[
X=\left[
\begin{array}
[c]{cc}%
S_{E} & 0\\
0 & S_{F}^{\ast}%
\end{array}
\right]  ,Y=\left[
\begin{array}
[c]{cc}%
S_{E} & 0\\
0 & S_{F}%
\end{array}
\right]  :H_{E}^{2}\oplus H_{F}^{2}\rightarrow H_{E}^{2}\oplus H_{F}^{2},
\]
Recall $V_{\Phi}$ is defined by (\ref{defvfphi}). It is easy to see that
\begin{equation}
XV_{\Phi}=\left[
\begin{array}
[c]{cc}%
T_{z}T_{A } & T_{z}T_{B }\\
T_{z}^{\ast}H_{C } & T_{z}^{\ast}H_{D }%
\end{array}
\right]  =\left[
\begin{array}
[c]{cc}%
T_{A } & T_{B }\\
H_{C } & H_{D }%
\end{array}
\right]  \left[
\begin{array}
[c]{cc}%
T_{z} & 0\\
0 & T_{z}%
\end{array}
\right]  =V_{\Phi}Y. \label{fundament}%
\end{equation}
The above simple relation reveals three facts:\newline(i) $V_{\Phi}\in I(Y,X)$
where $I(Y,X)$ denote the set of bounded linear operators $W$ intertwining $Y$
and $X,$ i.e. $WY=XW.$\newline(ii) The closure of the range of $V_{\Phi},$
$\overline{R(V_{\Phi})}$ is an invariant subspace of $X.$\newline(iii)
$\ker(V_{\Phi}^{\ast})$ is an invariant subspace of $S_{E}^{\ast}\oplus
S_{F}.$\newline For (i), by using the commutant lifting theorem, Theorem
\ref{lifting}, recalled below, we show that $I(Y,X)$ consists of exactly those
$V_{\Phi}.$ For (ii) and (iii), we ask which invariant subspace of $X$ is of
the form $\overline{R(V_{\Phi})}$ for some $\Phi$, or which invariant subspace
of $S_{E}^{\ast}\oplus S_{F}$ is of the form $\ker(V_{\Phi}^{\ast})$, see also
Questions \ref{invaiantr} and \ref{invaiantk} below. For an operator-valued
$A,$ we will use short-hand notation $A\in H^{\infty}$ instead of $A \in
H_{B(E,F)}^{\infty}$ where $B(E,F)$ is clear from the context.

\begin{theorem}
\label{lifting} (Page 69 \cite{NFBK}) (i) Let $T\in B(H)$ and $T^{\prime}\in
B(H^{\prime})$ be two contractions. Let $U\in B(K)$ and $U^{\prime}\in
B(K^{\prime})$ be the minimal isometric dilations of $T$ and $T^{\prime},$
respectively. Then for any operator $W\in B(H,H^{\prime})$ satisfying
$WT=T^{\prime}W,$ there exists $W_{1}\in B(K,K^{\prime})$ such that
$W_{1}U=U^{\prime}W_{1},$ $\left\Vert W_{1}\right\Vert =\left\Vert
W\right\Vert ,$ $W=P_{H^{\prime}}W_{1}|H$ and $W(K\ominus H)\subset K^{\prime
}\ominus H^{\prime}.$\newline(ii) Let $T\in B(H)$ and $T^{\prime}\in
B(H^{\prime})$ be two contractions . Let $U\in B(K)$ and $U^{\prime}\in
B(K^{\prime})$ be the co-isometric extension of $T$ and $T^{\prime},$
respectively. Then for any operator $W\in B(H,H^{\prime})$ satisfying
$WT=T^{\prime}W,$ there exists $W_{1}\in B(K,K^{\prime})$ such that
$W_{1}U=U^{\prime}W_{1},$ $\left\Vert W_{1}\right\Vert =\left\Vert
W\right\Vert ,$ $W=W_{1}|H$.
\end{theorem}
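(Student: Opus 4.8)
The final statement is the commutant lifting theorem of Sz.-Nagy and Foias, so my plan is to reproduce the classical argument, proving part (i) directly and deducing part (ii) by duality. First I would normalize $\Vert W\Vert\le 1$ by scaling. Since the identity $W=P_{H'}W_{1}|H$ exhibits $W$ as a compression of $W_{1}$, every lifting automatically satisfies $\Vert W_{1}\Vert\ge\Vert W\Vert$; hence it suffices to manufacture a \emph{contractive} $W_{1}\in B(K,K')$ obeying the intertwining relation $W_{1}U=U'W_{1}$, the compression identity, and the inclusion $W_{1}(K\ominus H)\subseteq K'\ominus H'$. The equality of norms then comes for free.

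For the construction I would realize $U$ and $U'$ through Sch\"{a}ffer's model of the minimal isometric dilation: $K=H\oplus\bigoplus_{n\ge 1}\mathcal D_{T}$ with defect space $\mathcal D_{T}=\overline{(I-T^{\ast}T)^{1/2}H}$, where $U$ acts as $T$ on $H$ together with the defect shift $h\mapsto (I-T^{\ast}T)^{1/2}h$ into the first added coordinate, and likewise for $U'$ on $K'=H'\oplus\bigoplus_{n\ge1}\mathcal D_{T'}$. In this model $K\ominus H=\bigoplus_{n\ge1}\mathcal D_{T}$, so the inclusion requirement says exactly that the $H'$-component of $W_{1}$ vanishes on $K\ominus H$. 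Writing $W_{1}$ in block form against $K=H\oplus(K\ominus H)$ and $K'=H'\oplus(K'\ominus H')$, the $(1,1)$ block is then forced to equal $W$ and the $(1,2)$ block to vanish, leaving only the two lower blocks to be determined.

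I would build these remaining blocks inductively over the defect layers. Starting from the prescribed first row $[\,W\ \ 0\,]$, which is contractive because $\Vert W\Vert\le1$, I would fill in each successive block so that, at every finite truncation, the relation $W_{1}U=U'W_{1}$ holds and the partial operator remains a contraction. The existence of such a block at each step is precisely Parrott's norm-minimizing completion lemma, whose hypotheses — contractivity of the row and column already constructed — are supplied by $\Vert W\Vert\le1$ together with the intertwining hypothesis $WT=T'W$. Passing to the strong limit of the partial liftings yields $W_{1}$ with $\Vert W_{1}\Vert\le1$ meeting all four requirements. I expect the main obstacle to be exactly this step: checking that the one-step Parrott completion stays contractive at \emph{every} stage, with the obstruction controlled uniformly by the defect operators $(I-T^{\ast}T)^{1/2}$ and $(I-T'^{\ast}T')^{1/2}$ and by $WT=T'W$, so that the limit is an \emph{exact} intertwiner rather than merely an approximate one.

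Finally, part (ii) I would obtain from part (i) by duality. The minimal co-isometric extension $U$ of $T$ is characterized by $U^{\ast}$ being the minimal isometric dilation of $T^{\ast}$, and similarly for $U'$. Taking adjoints sends $W_{1}U=U'W_{1}$ to $U^{\ast}W_{1}^{\ast}=W_{1}^{\ast}U'^{\ast}$, and the hypothesis $WT=T'W$ to $T^{\ast}W^{\ast}=W^{\ast}T'^{\ast}$, so that $W^{\ast}$ intertwines $T'^{\ast}$ and $T^{\ast}$. Applying part (i) to the contractions $T'^{\ast},T^{\ast}$ and the intertwiner $W^{\ast}$ produces a contractive lifting whose adjoint is the desired $W_{1}$; under adjunction the compression-plus-inclusion conditions of (i) convert into the restriction identity $W=W_{1}|H$ of (ii), which completes the proof.
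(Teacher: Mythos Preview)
The paper does not prove this theorem: it is quoted verbatim as the commutant lifting theorem from \cite{NFBK} (with the page reference) and is used purely as a black-box tool in the proofs of Theorem~\ref{vfphic} and Proposition~\ref{inequality}. There is therefore no ``paper's own proof'' to compare your proposal against.

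That said, your outline is one of the standard routes to the Sz.-Nagy--Foias commutant lifting theorem: the Sch\"affer matrix model of the minimal isometric dilation, an inductive one-step lifting using Parrott's completion lemma to preserve contractivity, and the duality between minimal isometric dilations and minimal co-isometric extensions to pass from (i) to (ii). The sketch is essentially correct; the only place requiring real care is, as you flag, verifying that at each stage the row/column contractivity hypotheses of Parrott's lemma are met, which ultimately comes down to the identity $D_{T'}W W^{\ast}D_{T'}\le D_{T'}^{2}$ (or an equivalent defect-operator inequality) forced by $WT=T'W$ and $\Vert W\Vert\le 1$. Note also that the inclusion condition in (i) is stated in the paper as ``$W(K\ominus H)\subset K'\ominus H'$'', which is evidently a typo for $W_{1}(K\ominus H)\subset K'\ominus H'$; you have read it correctly.
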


%Let $J_{1}:L_{F}^{2}\rightarrow L_{F}^{2}$ be the unitary operator defined by $J_{1}f(z)=f(\overline{z}),f(z)\in L_{F}^{2}.$ Then $J_{1}^{2}=I,$ $J_{1}^{\ast}=J_{1},$ and $M_{z}J_{1}=J_{1}M_{z}^{\ast}$ on $L_{F}^{2}.$

\begin{lemma}
\label{multi}Let $A$ be a bounded linear operator from $H_{F}^{2}$ to
$L_{F}^{2}$ such that $AT_{z}=M_{z}^{\ast}A.$ Then there exists $\Phi\in
L_{B(F)}^{\infty}$ such that $A=JM_{\Phi}.$
\end{lemma}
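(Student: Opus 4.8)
The plan is to push the identity $AT_z=M_z^{\ast}A$ through the symmetry $J$ so that it turns into an intertwining relation between two bilateral shifts, lift that relation to all of $L_F^2$ by the commutant lifting theorem, and then read off $\Phi$ from the classical description of the commutant of the bilateral shift. \emph{Step 1 (reduction via $J$).} Put $B:=JA\colon H_F^2\to L_F^2$. Since $JM_z^{\ast}=M_zJ$, the hypothesis gives
\[
BT_z=JAT_z=JM_z^{\ast}A=M_zJA=M_zB,
\]
so $B$ intertwines the unilateral shift $T_z$ on $H_F^2$ with the bilateral shift $M_z$ on $L_F^2$. Because $J^2=I$, it suffices to produce $\Phi\in L_{B(F)}^{\infty}$ with $B=M_{\Phi}|_{H_F^2}$, for then $A=JB=JM_{\Phi}$.

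\emph{Step 2 (lifting to $L_F^2$).} Here $M_z$ on $L_F^2$ is the co-isometric extension of $T_z$ on $H_F^2$ (its adjoint $M_z^{\ast}$ is the minimal isometric dilation of $T_z^{\ast}$), and $M_z$ on $L_F^2$ is its own co-isometric extension. Applying Theorem~\ref{lifting}(ii) to $BT_z=M_zB$ produces $\widetilde B\in B(L_F^2)$ with $\widetilde B\,M_z=M_z\,\widetilde B$ and $\widetilde B|_{H_F^2}=B$; alternatively one builds $\widetilde B$ by hand on the increasing dense chain $M_z^{-n}H_F^2$ by $\widetilde B(M_z^{-n}h):=M_z^{-n}Bh$, which is well defined and bounded by the intertwining relation. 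Commuting with $M_z$, the operator $\widetilde B$ commutes with $M_z^{\ast}$, with $M_p$ for every trigonometric polynomial $p$, and—by bounded weak-$\ast$ approximation—with $M_{\varphi}$ for every scalar $\varphi\in L^{\infty}$.

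\emph{Step 3 (extracting the symbol).} Set $\Phi(\cdot)e:=\widetilde B e$ for $e\in F$ (constant function); this is linear in $e$. From $\widetilde B(\varphi e)=M_{\varphi}\widetilde B e=\varphi\,\Phi(\cdot)e$ one obtains $\int_{\mathbb T}|\varphi|^{2}\,\|\Phi(z)e\|^{2}\,dm\le\|\widetilde B\|^{2}\|e\|^{2}\int_{\mathbb T}|\varphi|^{2}\,dm$ for all $\varphi\in L^{\infty}$, hence $\|\Phi(z)e\|\le\|\widetilde B\|\,\|e\|$ a.e.; running $e$ through a countable dense subset of $F$ and using separability yields a strongly measurable $\Phi\in L_{B(F)}^{\infty}$ with $\|\Phi\|_{\infty}\le\|\widetilde B\|$ and $\widetilde B(\varphi e)=M_{\Phi}(\varphi e)$ for all scalar $\varphi$ and all $e$. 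Since the finite sums $\sum\varphi_i e_i$ are dense in $L_F^2$ and both operators are bounded, $\widetilde B=M_{\Phi}$, and with Step~1 this gives $A=JM_{\Phi}$. This final step is exactly the standard identification of the commutant of the bilateral shift $B_F$ with $\{M_{\Phi}:\Phi\in L_{B(F)}^{\infty}\}$, which belongs to the Helson--Lowdenslager circle of ideas recalled in Section~3. The manipulations with $J$ and the lifting are routine; the only point that needs care is this last step, where turning the scalar-coefficient identities $\widetilde B(\varphi e)=\varphi\,\widetilde B e$ into a genuine, measurable, essentially bounded operator-valued symbol is where the separability of $F$ and the passage to a countable dense set are used.
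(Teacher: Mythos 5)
Your proof is correct and follows essentially the same route as the paper: the paper's proof consists precisely of your Step 1 (conjugating by $J$ to get $(JA)T_z=M_z(JA)$) and then simply invokes the standard fact that an operator intertwining $T_z$ on $H_F^2$ with $M_z$ on $L_F^2$ is the restriction of a multiplication $M_\Phi$ with $\Phi\in L_{B(F)}^{\infty}$. Your Steps 2 and 3 just supply the (correct) standard proof of that quoted fact via extension to the commutant of the bilateral shift, so there is no substantive difference in approach.
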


\begin{proof}
Note that $AT_{z}=M_{z}^{\ast}A$ if and only if $\left(  JA\right)
T_{z}=JM_{z}^{\ast}A=M_{z}\left(  JA\right)  .$ Therefore, there exists
$\Phi\in L_{B(F)}^{\infty}$ such that $\left(  JA\right)  h=\Phi h$ for $h \in
H_{F}^{2}.$
\end{proof}

Now we can characterize the elements in $I(Y,X)$.

\begin{theorem}
\label{vfphic}Let $W\in I(Y,X).$ Then $W=V_{\Phi}$ for some $\Phi\in
\Gamma_{E\oplus F}^{\infty}.$ Furthermore,%
\begin{equation}
\left\Vert V_{\Phi}\right\Vert =\inf_{L_{1},L_{2}\in H^{\infty}}\left\Vert
\left[
\begin{array}
[c]{cc}%
A & B\\
C & D
\end{array}
\right]  -\left[
\begin{array}
[c]{cc}%
0 & 0\\
L_{1} & L_{2}%
\end{array}
\right]  \right\Vert _{L^{\infty}}. \label{norm}%
\end{equation}

\end{theorem}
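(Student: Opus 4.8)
The plan is to split the statement into two parts: first the structural claim that every $W \in I(Y,X)$ equals $V_{\Phi}$ for some $\Phi \in \Gamma_{E\oplus F}^{\infty}$, and then the norm formula \eqref{norm}. For the structural part, I would write $W$ as a $2\times 2$ operator matrix $\bigl[\begin{smallmatrix} W_{11} & W_{12}\\ W_{21} & W_{22}\end{smallmatrix}\bigr]$ with respect to the decomposition $H_E^2 \oplus H_F^2$, and read off from $WY = XW$ the four scalar intertwining relations: $W_{11} T_z = T_z W_{11}$, $W_{12} T_z = T_z W_{12}$, $W_{21} T_z = T_z^* W_{21}$, and $W_{22} T_z = T_z^* W_{22}$. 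The first two say $W_{11}$ and $W_{12}$ are (analytic) Toeplitz operators, so $W_{11} = T_A$ with $A \in H_{B(E)}^{\infty}$ and $W_{12} = T_B$ with $B \in H_{B(F,E)}^{\infty}$ by the standard characterization $T_z^* M T_z = M$ combined with analyticity (equivalently, $M T_z = T_z M$). The last two relations are exactly $AT_z = T_z^* A$, which is the defining equation of a Hankel operator recalled in the introduction, so $W_{21} = H_C$ for some $C \in L_{B(E,F)}^{\infty}$ and $W_{22} = H_D$ for some $D \in L_{B(F)}^{\infty}$; here I would invoke the known fact that every operator satisfying the Hankel intertwining equation is $H_\Phi$ for an $L^\infty$ symbol (this is the Hankel analogue of what Lemma \ref{multi} does for the $L_F^2$-valued setting, and is cited implicitly via the characterization $AT_z = T_z^* A$ right after the definition of $H_\Phi$). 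Assembling the four pieces gives $W = V_{\Phi}$ with $\Phi = \bigl[\begin{smallmatrix} A & B\\ C & D\end{smallmatrix}\bigr] \in \Gamma_{E\oplus F}^{\infty}$.

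For the norm formula, the key observation is that $Y = S_E \oplus S_F$ is a shift (an isometry), so it is its own minimal isometric dilation, while $X = S_E \oplus S_F^*$ is a contraction whose minimal isometric dilation is $M_z = B_E \oplus B_F$ acting on $L_E^2 \oplus L_F^2$ (the $S_F^*$ block dilates to $M_z$ on $L_F^2$, and the $S_E$ block is already an isometry, embedded via $H_E^2 \hookrightarrow L_E^2$). Apply Theorem \ref{lifting}(i) with $T = Y$, $T' = X$, $U = Y$, $U' = M_z$: the operator $V_{\Phi} \in I(Y,X)$ lifts to some $W_1 \in B(L_E^2\oplus L_F^2,\ L_E^2\oplus L_F^2)$... wait, more carefully, $W_1 \in B(K, K')$ where $K = H_E^2 \oplus H_F^2$ (dilation space of $Y$, which is just $Y$'s own space since $Y$ is already an isometry — actually the minimal isometric dilation of an isometry is itself) and $K' = L_E^2 \oplus L_F^2$. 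So $W_1: H_E^2 \oplus H_F^2 \to L_E^2 \oplus L_F^2$ intertwines $Y$ with $M_z$, satisfies $\|W_1\| = \|V_{\Phi}\|$, and $P_{H_E^2\oplus H_F^2} W_1 = V_{\Phi}$. Now an operator from $H_E^2\oplus H_F^2$ into $L_E^2 \oplus L_F^2$ intertwining $Y$ and $M_z$ must be of the form $\bigl[\begin{smallmatrix} M_{A} & M_{B}\\ JM_{C} & JM_{D}\end{smallmatrix}\bigr]$ restricted to $H_E^2\oplus H_F^2$ — using Lemma \ref{multi} for the bottom row and the analogous multiplication characterization for the top — where the symbols extend the analytic blocks $A, B$ and have the $L^\infty$ blocks in the bottom row constrained to reduce to $C, D$ modulo $H^\infty$ after compression. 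Compressing back: $P[M_A h] = T_A h$, and $J M_{C} $ compressed to $H_F^2$ via $PJ$ gives $H_{C}$ provided the analytic part of the $L^\infty$ symbol is free; this is where the infimum over $L_1, L_2 \in H^\infty$ enters, since adding analytic functions $L_1, L_2$ to $C, D$ does not change $H_C, H_D$. Thus $\|V_\Phi\|$ equals the infimum of $\|W_1\|$ over all such lifts, which is exactly the right-hand side of \eqref{norm}; the matching lower bound $\|V_\Phi\| \le \|W_1\|$ is immediate from $V_\Phi = P W_1 |_{H_E^2\oplus H_F^2}$.

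The main obstacle I anticipate is the careful bookkeeping in the dilation/compression step: identifying precisely which multiplication operators on $L_E^2 \oplus L_F^2$ intertwine $Y$ with $M_z$, checking that the compression $P_{H_E^2\oplus H_F^2}$ sends each block to the correct Toeplitz or Hankel operator, and verifying that the only freedom in the lift is the addition of analytic symbols $L_1, L_2$ in the bottom row (so that the infimum in \eqref{norm} genuinely ranges over all lifts and nothing more). In particular one must confirm that the top row of the lift is \emph{forced} to be $M_A, M_B$ with no freedom — because $S_E$ is already an isometry, the condition $W_1(K \ominus H) \subset K' \ominus H'$ in Theorem \ref{lifting}(i) pins down the top-row symbols on the analytic side — so the distance problem only involves perturbing the $C, D$ entries. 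Once this is set up correctly the two inequalities giving \eqref{norm} are routine.
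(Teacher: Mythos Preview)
Your structural argument in the first paragraph is correct and is actually more direct than the paper's: you read off each block of $W$ separately from the four intertwining relations, whereas the paper derives the structure and the norm formula simultaneously from a single application of commutant lifting (Theorem~\ref{lifting}(i)). Either route is fine for the first assertion.

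The norm argument, however, has a genuine error in identifying the dilation. The minimal isometric dilation of $X = S_E \oplus S_F^*$ is \emph{not} $B_E \oplus B_F$ on $L_E^2 \oplus L_F^2$. Since $S_E$ is already an isometry, its minimal isometric dilation is $S_E$ itself on $H_E^2$, and the minimal isometric dilation of $S_F^*$ is $M_z^*$ (not $M_z$) on $L_F^2$. Thus $U' = T_z \oplus M_z^*$ on $K' = H_E^2 \oplus L_F^2$, which is exactly what the paper uses. This matters: with your $U' = M_z$, the relation $W_1 Y = M_z W_1$ would give $(W_1)_{2j}\, T_z = M_z\, (W_1)_{2j}$ in the bottom row, forcing analytic multiplication operators rather than the $J M_\Phi$ form; Lemma~\ref{multi} (which concerns $A T_z = M_z^{*} A$) would then be inapplicable, contradicting your own invocation of it.

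With the correct $U'$, the top row of $W_1$ already lands in $H_E^2$ and is therefore forced to equal $T_A, T_B$ with no freedom (no appeal to the $K\ominus H$ clause of Theorem~\ref{lifting} is needed here), while the bottom row satisfies $A_i T_z = M_z^{*} A_i$ and hence equals $JM_{C_1}, JM_{D_1}$ by Lemma~\ref{multi}. Writing
\[
W_1=\begin{bmatrix} I & 0 \\ 0 & J\end{bmatrix}\begin{bmatrix} T_A & T_B \\ M_{C_1} & M_{D_1}\end{bmatrix}
\]
and using that $J$ is unitary gives $\|W_1\| = \bigl\|\bigl[\begin{smallmatrix} A & B \\ C_1 & D_1\end{smallmatrix}\bigr]\bigr\|_{L^\infty}$. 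The only freedom in the lift is the analytic part of $C_1, D_1$ (since $H_C = H_{C_1}$, $H_D = H_{D_1}$), which is precisely the infimum over $L_1, L_2 \in H^\infty$ in \eqref{norm}; the equality $\|W_1\| = \|V_\Phi\|$ supplied by commutant lifting shows this infimum is attained.
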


\begin{proof}
Assume $W\in I(Y,X).$ That is, $WY=XW$. Note that $Y$ is an isometry, so the
minimal isometric dilation of $Y$ is itself. The minimal isometry dilation of
$X$ is $U:=T_{z}\oplus M_{z}^{\ast}$ defined on $H_{E}^{2}\oplus L_{F}^{2}.$
By Theorem \ref{lifting} (i), there exists $W_{1}:H_{E}^{2}\oplus H_{F}%
^{2}\rightarrow H_{E}^{2}\oplus L_{F}^{2}$ such that $W_{1}Y=UW_{1},$
$\left\Vert W_{1}\right\Vert =\left\Vert W\right\Vert ,$ $W=P_{1}W_{1}$ where
$P_{1}$ is the projection from $H_{E}^{2}\oplus L_{F}^{2}$ onto $H_{E}%
^{2}\oplus H_{F}^{2}.$ Write%
\[
W_{1}=\left[
\begin{array}
[c]{cc}%
A_{1} & A_{2}\\
A_{3} & A_{4}%
\end{array}
\right]  :H_{E}^{2}\oplus H_{F}^{2}\rightarrow H_{E}^{2}\oplus L_{F}^{2}.
\]
Then $W_{1}Y=UW_{1}$ becomes%
\[
\left[
\begin{array}
[c]{cc}%
A_{1}T_{z} & A_{2}T_{z}\\
A_{3}T_{z} & A_{4}T_{z}%
\end{array}
\right]  =\left[
\begin{array}
[c]{cc}%
T_{z}A_{1} & T_{z}A_{2}\\
M_{z}^{\ast}A_{3} & M_{z}^{\ast}A_{4}%
\end{array}
\right]  .
\]
Therefore, there exist $A , B \in H^{\infty}$ such that $A_{1}=T_{A}%
,B_{1}=T_{B}.$ By Lemma \ref{multi}, there exist $C_{1}, D_{1}\in L^{\infty}$
such that $A_{3}=JM_{C_{1}},A_{4}=JM_{D_{1}}.$ Thus%
\begin{equation}
W_{1}=\left[
\begin{array}
[c]{cc}%
I & 0\\
0 & J
\end{array}
\right]  \left[
\begin{array}
[c]{cc}%
T_{A } & T_{B }\\
M_{C_{1} } & M_{D_{1} }%
\end{array}
\right]  \text{ and }\left\Vert W_{1}\right\Vert =\left\Vert \left[
\begin{array}
[c]{cc}%
A & B\\
C_{1} & D_{1}%
\end{array}
\right]  \right\Vert _{L^{\infty}} \label{w1norm}%
\end{equation}
Now $W=P_{1}W_{1}$ implies that $W=V_{\Phi}$ where $H_{C}=H_{C_{1}}$ and
$H_{D}=H_{D_{1}}.$

Next we prove (\ref{norm}). It is clear that $\left\Vert V_{\Phi}\right\Vert
\leq$ right side of (\ref{norm}). By $\left\Vert W_{1}\right\Vert =\left\Vert
W\right\Vert $ and (\ref{w1norm}), the infimum in the right side of
(\ref{norm}) is attained with $\left[
\begin{array}
[c]{cc}%
L_{1} & L_{2}%
\end{array}
\right]  =\left[
\begin{array}
[c]{cc}%
C -C_{1} & D -D_{1}%
\end{array}
\right]  .$
\end{proof}

In the above theorem, when $A=B=C=0,$ we have Nehari's theorem \cite{Nehari}.
The following lemma follows from BLH (Beurling-Lax-Halmos) Theorem and
(\ref{fundament}).

\begin{lemma}
$\ker(V_{\Phi})$ is $T_{z}$-invariant. So there exist $E_{0}\subseteq E\oplus
F$ and left inner function $\Theta$ such that $\ker(V_{\Phi})=\Theta H_{E_{0}%
}^{2}.$
\end{lemma}

Set $H_{E\oplus F}^{2} =H_{E}^{2}\oplus H_{F}^{2}.$ Let $K_{\Theta}=H_{E\oplus
F}^{2}\ominus\Theta H_{E_{0}}^{2}$ and $P_{\Theta}$ the projection from
$H_{E\oplus F}^{2}$ onto $K_{\Theta}.$ So $V_{\Phi}$ maps $K_{\Theta}$ onto
$R(V_{\Phi})$ (the range of $V_{\Phi}$) which establishes a correspondence
between $Y$-invariant subspaces and $X$-invariant subspaces. Considering the
de Branges-Rovnyak space $\mathcal{M(}\Phi)=\mathcal{M(}V_{\Phi})$ such that%
\[
\left\Vert V_{\Phi}h\right\Vert _{\mathcal{M(}\Phi)}=\left\Vert P_{\Theta
}h\right\Vert .
\]
Then $\left\Vert XV_{\Phi}h\right\Vert _{\mathcal{M(}\Phi)}=\left\Vert
V_{\Phi}Yh\right\Vert _{\mathcal{M(}\Phi)}=\left\Vert P_{\Theta}zh\right\Vert
\leq\left\Vert P_{\Theta}h\right\Vert =\left\Vert V_{\Phi}h\right\Vert
_{\mathcal{M(}\Phi)},$ so $X$ acts as a contraction on $\mathcal{M(}\Phi),$
not as an isometry in general. The $R(V_{\Phi})$ as a de Branges-Rovnyak space
is a closed subspace of $H_{E}^{2}\oplus H_{F}^{2}$ if and only if $V_{\Phi}$
is a partial isometry. This leads to our question.

Recall that an operator $G\in B(H,K)$ is a partial isometry if $G$ maps
$\ker(G)^{\perp}$ isometrically onto $R(G).$ The operator $G$ is a partial
isometry if and only if $G^{\ast}$ is a partial isometry, if and only if
$GG^{\ast}$ (or $G^{\ast}G)$ is a projection.

\begin{question}
\label{pisop}When is $V_{\Phi}$ a partial isometry?
\end{question}

The above problem seems a difficult one, since the partial isometric
characterizations of Toeplitz operator $T_{A }$ ($A $ is not necessary
analytic) and Hankel operator $H_{D }$ are only known when $A$ and $D $ are
scalar functions in $L^{\infty}$ \cite{BrownDouglas} \cite{Pe}. The above
problem is inspired by the invariant subspace problem of $X=S_{E}\oplus
S_{F}^{\ast}.$

\begin{question}
\label{invaiantr}If $N$ is an invariant subspace of $X=S_{E}\oplus S_{F}%
^{\ast},$ can we find $V_{\Phi}$ such that $V_{\Phi}$ is a partial isometry
and $N=R(V_{\Phi})?$
\end{question}

Recall $W_{\Psi}$ is defined by (\ref{wpsi}).
%Since $\ker(V_{\Phi}^{\ast})$ is an invariant subspace of $S_{E}^{\ast}\oplus S_{F},$ the operator $W_{\Psi}$ is essentially $V_{\Phi}^{\ast}$.
It is easy to see that
\begin{equation}
W_{\Psi}X=\left[
\begin{array}
[c]{cc}%
H_{C }^{\ast}T_{z} & T_{A }^{\ast}T_{z}^{\ast}\\
H_{D }^{\ast}T_{z} & T_{B }^{\ast}T_{z}^{\ast}%
\end{array}
\right]  =\left[
\begin{array}
[c]{cc}%
T_{z}^{\ast}H_{C }^{\ast} & T_{z}^{\ast}T_{A }^{\ast}\\
T_{z}^{\ast}H_{D }^{\ast} & T_{z}^{\ast}T_{B}^{\ast}%
\end{array}
\right]  =Y^{\ast}W_{\Psi}. \label{funda2}%
\end{equation}
Thus $\ker(W_{\Psi})$ is an invariant subspace of $S_{E}\oplus S_{F}^{\ast}.$
This leads to the following related question.

\begin{question}
\label{invaiantk}If $N$ is an invariant subspace of $X=S_{E}\oplus S_{F}%
^{\ast},$ can we find $W_{\Psi}$ such that $N=\ker(W_{\Psi})?$
\end{question}

We may ask if we can choose a partial isometry $W_{\Psi}$ such that
$N=\ker(W_{\Psi}).$ We remark that in Questions \ref{invaiantr} and
\ref{invaiantk}, we assume both $S_{E}$ and $S_{F}^{\ast}$ are present. But in
fact, the case where either $S_{E}$ or $S_{F}^{\ast}$ is absent is also
interesting and not completely resolved. We will study Questions
\ref{invaiantr} and \ref{invaiantk} in detail in Section 4, and then establish
Theorems \ref{thm1} and \ref{thm2}.

Let $\Phi_{1} ,$ $\Phi_{2} \in\Gamma_{E\oplus F}^{\infty}.$ By Douglas lemma,
$R(V_{\Phi_{1}})\subset R(V_{\Phi_{2}})$ if and only if $V_{\Phi_{1} }%
V_{\Phi_{1}}^{\ast}\leq\lambda V_{\Phi_{2}}V_{\Phi_{2}}^{\ast}$ for some
$\lambda>0,$ if and only if there exists a bounded linear operator $W$ such
that $V_{\Phi_{1}}=V_{\Phi_{2}}W.$ The precise form of $W$ can be obtained by
using commutant lifting theorem.

\begin{proposition}
\label{inequality}Let $\Phi_{1} ,$ $\Phi_{2} \in\Gamma_{E\oplus F}^{\infty}.$
Then $V_{\Phi_{1}}V_{\Phi_{1}}^{\ast}\leq V_{\Phi_{2}}V_{\Phi_{2}}^{\ast}$ if
and only if there exists a contractive analytic Toeplitz operator $T_{\Omega}$
on $H_{E\oplus F}^{2}$ such that $V_{\Phi_{1}}=V_{\Phi_{2}}T_{\Omega}.$
\end{proposition}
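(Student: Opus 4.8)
The plan is as follows. The ``if'' direction is immediate and does not even use analyticity: if $V_{\Phi_1}=V_{\Phi_2}T_\Omega$ with $\left\Vert T_\Omega\right\Vert\le 1$, then $V_{\Phi_1}V_{\Phi_1}^{\ast}=V_{\Phi_2}T_\Omega T_\Omega^{\ast}V_{\Phi_2}^{\ast}\le V_{\Phi_2}V_{\Phi_2}^{\ast}$. For the converse I would begin with Douglas's lemma: $V_{\Phi_1}V_{\Phi_1}^{\ast}\le V_{\Phi_2}V_{\Phi_2}^{\ast}$ produces the Douglas factor $W$, the unique contraction on $H_{E\oplus F}^{2}$ with $V_{\Phi_1}=V_{\Phi_2}W$, $\ker W=\ker V_{\Phi_1}$ and $\overline{R(W)}\subseteq\overline{R(V_{\Phi_2}^{\ast})}$. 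By the lemma stating that $\ker(V_{\Phi})=\Theta H_{E_{0}}^{2}$ for a left inner $\Theta$, write $\ker V_{\Phi_i}=\Theta_i H_{E_i}^{2}$; then $W$ annihilates $\Theta_1 H_{E_1}^{2}$ and maps into $K_{\Theta_2}=H_{E\oplus F}^{2}\ominus\Theta_2 H_{E_2}^{2}$, so $W$ restricts to a contraction $\dot W:=W|_{K_{\Theta_1}}:K_{\Theta_1}\to K_{\Theta_2}$.

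Next I would show that $\dot W$ intertwines the model operators $S_{\Theta_1}$ and $S_{\Theta_2}$, i.e.\ the compressions of $T_z$ to $K_{\Theta_1}$ and $K_{\Theta_2}$. From (\ref{fundament}) one has $XV_{\Phi_i}=V_{\Phi_i}T_z$ on $H_{E\oplus F}^{2}$; since $T_z g-S_{\Theta_i}g\in\Theta_i H_{E_i}^{2}=\ker V_{\Phi_i}$ for $g\in K_{\Theta_i}$, this yields $X\bigl(V_{\Phi_i}|_{K_{\Theta_i}}\bigr)=\bigl(V_{\Phi_i}|_{K_{\Theta_i}}\bigr)S_{\Theta_i}$, and $V_{\Phi_i}|_{K_{\Theta_i}}$ is injective because $K_{\Theta_i}\cap\ker V_{\Phi_i}=\{0\}$. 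Combining these intertwining relations with $V_{\Phi_1}|_{K_{\Theta_1}}=\bigl(V_{\Phi_2}|_{K_{\Theta_2}}\bigr)\dot W$ and cancelling the injective operator $V_{\Phi_2}|_{K_{\Theta_2}}$ gives $\dot W S_{\Theta_1}=S_{\Theta_2}\dot W$. (Equivalently: for $g\in K_{\Theta_1}$, decomposing $T_z(Wg)$ according to $H_{E\oplus F}^{2}=K_{\Theta_2}\oplus\Theta_2 H_{E_2}^{2}$ shows $WS_{\Theta_1}g-S_{\Theta_2}Wg\in K_{\Theta_2}\cap\Theta_2 H_{E_2}^{2}=\{0\}$.)

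Then I would invoke the commutant lifting theorem, Theorem \ref{lifting}(i), applied to the contractions $S_{\Theta_1}$ on $K_{\Theta_1}$ and $S_{\Theta_2}$ on $K_{\Theta_2}$ with isometric dilation $T_z$ on $H_{E\oplus F}^{2}$, and to the intertwiner $\dot W$. This produces $W_1$ on $H_{E\oplus F}^{2}$ with $W_1 T_z=T_z W_1$, $\left\Vert W_1\right\Vert=\left\Vert\dot W\right\Vert\le 1$, $\dot W=P_{K_{\Theta_2}}W_1|_{K_{\Theta_1}}$, and, from the ``$W(K\ominus H)\subseteq K'\ominus H'$'' clause, $W_1\bigl(\Theta_1 H_{E_1}^{2}\bigr)\subseteq\Theta_2 H_{E_2}^{2}$. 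Since $W_1$ commutes with the shift it is an analytic Toeplitz operator $T_\Omega$ with $\Omega\in H_{B(E\oplus F)}^{\infty}$, and $\left\Vert T_\Omega\right\Vert\le 1$ is exactly $\left\Vert\Omega\right\Vert_{\infty}\le 1$. Finally I would verify $V_{\Phi_1}=V_{\Phi_2}T_\Omega$: writing $h=h_1+\Theta_1 g$ with $h_1\in K_{\Theta_1}$, the inclusion $T_\Omega(\Theta_1 H_{E_1}^{2})\subseteq\ker V_{\Phi_2}$ kills the second term, while $P_{K_{\Theta_2}}T_\Omega h_1=\dot W h_1$ together with $V_{\Phi_1}|_{K_{\Theta_1}}=\bigl(V_{\Phi_2}|_{K_{\Theta_2}}\bigr)\dot W$ gives $V_{\Phi_2}T_\Omega h=V_{\Phi_1}h_1=V_{\Phi_1}h$.

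The main obstacle I expect is the interface between the abstract output of commutant lifting and the demand that $T_\Omega$ act on the \emph{full} space $H_{E\oplus F}^{2}$. One has to justify using $T_z$ on $H_{E\oplus F}^{2}$ as an isometric dilation of each $S_{\Theta_i}$ — this is precisely where the Beurling-Lax-Halmos form of $\ker V_{\Phi_i}$ enters, and when $\Theta_i$ is not right extremal one is forced to lift into a smaller $H_{E_i^{\sharp}}^{2}$ and extend the symbol by zero, re-checking the factorization — and one has to make sure the ``$W(K\ominus H)\subseteq K'\ominus H'$'' clause translates exactly into $T_\Omega(\ker V_{\Phi_1})\subseteq\ker V_{\Phi_2}$, since that is what upgrades $V_{\Phi_1}=V_{\Phi_2}T_\Omega$ from the compression $K_{\Theta_1}$ to all of $H_{E\oplus F}^{2}$. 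The rest is bookkeeping with the restrictions $V_{\Phi_i}|_{K_{\Theta_i}}$.
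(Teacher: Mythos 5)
Your proposal is correct in its architecture and rests on exactly the two tools the paper uses --- Douglas's lemma followed by commutant lifting, with the lifted operator recognized as an analytic Toeplitz operator because it commutes with the shift --- but you run the argument on the ``co-invariant'' side, and the paper runs it on the adjoint side, which turns out to be shorter. The paper applies Douglas to get a contraction $W$ with $WV_{\Phi_2}^{\ast}=V_{\Phi_1}^{\ast}$ acting between $\overline{R(V_{\Phi_2}^{\ast})}$ and $\overline{R(V_{\Phi_1}^{\ast})}$, notes that these subspaces are $Y^{\ast}$-invariant by (\ref{fundament}), checks in one line that $W$ intertwines the \emph{restrictions} of $Y^{\ast}$, and invokes Theorem \ref{lifting}(ii): since $Y^{\ast}$ on all of $H_{E\oplus F}^{2}$ is a co-isometric extension of its restriction to an invariant subspace, the lift $W_1$ automatically lives on the whole space and commutes with $Y^{\ast}$, so $W_1^{\ast}=T_{\Omega}$ and $V_{\Phi_1}=V_{\Phi_2}T_{\Omega}$. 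No Beurling--Lax--Halmos description of $\ker V_{\Phi_i}$, no model spaces, and no compression operators are needed. Your route buys nothing extra here and costs you the bookkeeping with $S_{\Theta_i}$, the injectivity/cancellation step, and the translation of the range clause into $T_{\Omega}(\ker V_{\Phi_1})\subseteq\ker V_{\Phi_2}$ --- all of which you do handle correctly.

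The one genuine soft spot is the dilation you feed into Theorem \ref{lifting}(i): $T_z$ on $H_{E\oplus F}^{2}$ is an isometric dilation of $S_{\Theta_i}$ (because $\Theta_i H_{E_i}^{2}$ is shift-invariant), but it is in general \emph{not} the minimal one --- e.g.\ if $\Theta_i$ is a nonunitary constant isometry then $\bigvee_{n\ge 0} z^{n}K_{\Theta_i}=H_{(E\oplus F)\ominus E_i}^{2}\subsetneq H_{E\oplus F}^{2}$ --- so the theorem as stated does not literally apply. The repair is standard but must be said: the minimal dilation space $K_0=\bigvee_{n\ge 0}z^{n}K_{\Theta_i}$ has orthogonal complement equal to the largest $T_z^{\ast}$-invariant subspace of $\Theta_i H_{E_i}^{2}$, which is also $T_z$-invariant, so $K_0$ reduces $T_z$; one lifts inside $K_0$ and extends by zero on $K_0^{\perp}$ to get a shift-commuting contraction on all of $H_{E\oplus F}^{2}$. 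You sense an obstacle at exactly this point but misdiagnose it: right extremality of $\Theta_i$ is irrelevant here (it never enters this proposition), and ``extending the symbol by zero'' is not the issue --- minimality of the dilation is. Your parenthetical alternative derivation of $\dot{W}S_{\Theta_1}=S_{\Theta_2}\dot{W}$ is also correct but needs the intermediate observation that $WS_{\Theta_1}g-T_z Wg\in\ker V_{\Phi_2}$, which follows from applying $V_{\Phi_2}$ and using (\ref{fundament}); as written it reads as if $W$ already intertwined the shifts, which it does not a priori.
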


\begin{proof}
Since $V_{\Phi_{1}}V_{\Phi_{1}}^{\ast}\leq V_{\Phi_{2}}V_{\Phi_{2}}^{\ast},$
there exists a contraction $W:H=\overline{R(V_{\Phi_{2}}^{\ast})}\rightarrow
H^{\prime}=\overline{R(V_{\Phi_{1}}^{\ast})}$ such that $WV_{\Phi_{2}}^{\ast
}=V_{\Phi_{1}}^{\ast}.$ By (\ref{fundament}), $R(V_{\Phi_{i}}^{\ast})$ is
$T_{z}^{\ast}$-invariant. Hence%
\[
\left(  WY^{\ast}\right)  V_{\Phi_{2}}^{\ast}=WV_{\Phi_{2}}^{\ast}X^{\ast
}=V_{\Phi_{1}}^{\ast}X^{\ast}=Y^{\ast}V_{\Phi_{1}}^{\ast}=Y^{\ast}WV_{\Phi
_{2}}^{\ast}.
\]
That is, $W(Y^{\ast}|H)=(Y^{\ast}|H^{\prime})W.$ By Theorem \ref{lifting}
(ii), there exists $W_{1}:H_{E\oplus F}^{2}\rightarrow H_{E\oplus F}^{2}$ such
that $W_{1}Y^{\ast}=Y^{\ast}W_{1}$ and $W=W_{1}|H.$ Therefore, $W_{1}^{\ast}$
is an analytic Toeplitz operator $T_{\Omega}$ and $V_{\Phi_{1}}=V_{\Phi_{2}%
}W^{\ast}=V_{\Phi_{2}}T_{\Omega}.$
\end{proof}

In the above proposition, when $V_{\Phi_{1}}$ and $V_{\Phi_{2}}$ just contain
Hankel operators, the result also follows from Theorem 4 in \cite{GuJFA}. The
following result for $W_{\Psi}$ can be proved similarly as Theorem
\ref{vfphic} and Proposition \ref{inequality}.

\begin{theorem}
Let $W\in I(X,Y^{\ast}).$ Then $W=W_{\Psi}$ for some $\Psi\in\Lambda_{E\oplus
F}^{\infty}.$ Furthermore,%
\[
\left\Vert W_{\Psi}\right\Vert =\inf_{L_{1},L_{2}\in H^{\infty}}\left\Vert
\left[
\begin{array}
[c]{cc}%
C & D\\
A & B
\end{array}
\right]  -\left[
\begin{array}
[c]{cc}%
L_{1} & L_{2}\\
0 & 0
\end{array}
\right]  \right\Vert _{L^{\infty}}.
\]
In addition, let $\Psi_{1} ,$ $\Psi_{2} \in\Gamma_{E\oplus F}^{\infty}.$ Then
$W_{\Psi_{1}}^{\ast}W_{\Psi_{1}}\leq W_{\Psi_{2}}^{\ast}W_{\Psi_{2}}$ if and
only if there exists a contractive analytic Toeplitz operator $T_{\Omega}$ on
$H_{E\oplus F}^{2}$ such that $W_{\Psi_{1}}=T_{\Omega}^{\ast}W_{\Psi_{2}}.$
\end{theorem}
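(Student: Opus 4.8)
The plan is to transcribe the proofs of Theorem~\ref{vfphic} and Proposition~\ref{inequality} into ``adjoint form'', which is natural since $V_{\Phi}$ is essentially $W_{\Psi}^{\ast}$. For the characterization of $I(X,Y^{\ast})$, equation \eqref{funda2} already gives $W_{\Psi}\in I(X,Y^{\ast})$ for every $\Psi\in\Lambda_{E\oplus F}^{\infty}$, so only the converse needs work. Given $W\in I(X,Y^{\ast})$, i.e. $WX=Y^{\ast}W$, I would take adjoints to get $W^{\ast}Y=X^{\ast}W^{\ast}$, so $W^{\ast}\in I(Y,X^{\ast})$ with $X^{\ast}=S_{E}^{\ast}\oplus S_{F}$, and then repeat the argument of Theorem~\ref{vfphic} with $X^{\ast}$ in place of $X$. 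Here $Y$ is its own minimal isometric dilation, while the minimal isometric dilation of $X^{\ast}=S_{E}^{\ast}\oplus S_{F}$ is $M_{z}^{\ast}\oplus S_{F}$ on $L_{E}^{2}\oplus H_{F}^{2}$ (the co-analytic summand dilates to $M_{z}^{\ast}$, the analytic summand is unchanged). Applying Theorem~\ref{lifting}(i) we obtain $W_{1}\colon H_{E}^{2}\oplus H_{F}^{2}\to L_{E}^{2}\oplus H_{F}^{2}$ with $W_{1}Y=(M_{z}^{\ast}\oplus S_{F})W_{1}$, $\Vert W_{1}\Vert=\Vert W^{\ast}\Vert$, and $W^{\ast}=P_{1}W_{1}$, where $P_{1}$ projects $L_{E}^{2}\oplus H_{F}^{2}$ onto $H_{E}^{2}\oplus H_{F}^{2}$. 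Comparing the $2\times2$ blocks in $W_{1}Y=(M_{z}^{\ast}\oplus S_{F})W_{1}$, the entries of the top row of $W_{1}$ each satisfy an equation $GT_{z}=M_{z}^{\ast}G$ and hence, by Lemma~\ref{multi}, equal $JM_{C}$, $JM_{D}$ with $C\in L_{B(E)}^{\infty}$, $D\in L_{B(F,E)}^{\infty}$, while the entries of the bottom row commute with $T_{z}$ and hence equal $T_{A}$, $T_{B}$ with $A\in H_{B(E,F)}^{\infty}$, $B\in H_{B(F)}^{\infty}$. Thus $W_{1}=\left[\begin{smallmatrix}J&0\\0&I\end{smallmatrix}\right]\left[\begin{smallmatrix}M_{C}&M_{D}\\T_{A}&T_{B}\end{smallmatrix}\right]$, compressing gives $W^{\ast}=\left[\begin{smallmatrix}H_{C}&H_{D}\\T_{A}&T_{B}\end{smallmatrix}\right]$, and taking adjoints back yields $W=\left[\begin{smallmatrix}H_{C}^{\ast}&T_{A}^{\ast}\\H_{D}^{\ast}&T_{B}^{\ast}\end{smallmatrix}\right]=W_{\Psi}$ with $\Psi=\left[\begin{smallmatrix}C&D\\A&B\end{smallmatrix}\right]\in\Lambda_{E\oplus F}^{\infty}$.

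The norm identity then follows as in Theorem~\ref{vfphic}. Since $\left[\begin{smallmatrix}J&0\\0&I\end{smallmatrix}\right]$ is unitary, $\Vert W\Vert=\Vert W^{\ast}\Vert=\Vert W_{1}\Vert=\bigl\Vert\left[\begin{smallmatrix}M_{C}&M_{D}\\T_{A}&T_{B}\end{smallmatrix}\right]\bigr\Vert$. In any lift $W_{1}$ of $W^{\ast}$ the analytic blocks $T_{A},T_{B}$ are determined by $W^{\ast}$ outright, whereas $C$ and $D$ enter only through $H_{C}$ and $H_{D}$ and so are determined only modulo $H^{\infty}$; since $P_{1}$ is a contraction every such lift has norm $\geq\Vert W^{\ast}\Vert$, and the commutant lifting theorem produces one of norm exactly $\Vert W^{\ast}\Vert$. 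Using $\Vert M_{\Theta}|_{H^{2}}\Vert=\Vert\Theta\Vert_{L^{\infty}}$ for operator-valued $\Theta$, this says $\Vert W\Vert=\inf_{L_{1},L_{2}\in H^{\infty}}\bigl\Vert\left[\begin{smallmatrix}C-L_{1}&D-L_{2}\\A&B\end{smallmatrix}\right]\bigr\Vert_{L^{\infty}}$, which (after replacing $L_{j}$ by $-L_{j}$) is the displayed infimum.

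For the order relation I would argue exactly as in Proposition~\ref{inequality}, with $W_{\Psi}$ in place of $V_{\Phi}$ and \eqref{funda2} in place of \eqref{fundament}. If $W_{\Psi_{1}}^{\ast}W_{\Psi_{1}}\leq W_{\Psi_{2}}^{\ast}W_{\Psi_{2}}$, the Douglas lemma yields a contraction $C\colon\overline{R(W_{\Psi_{2}})}\to\overline{R(W_{\Psi_{1}})}$ with $W_{\Psi_{1}}=CW_{\Psi_{2}}$. By \eqref{funda2} each $\overline{R(W_{\Psi_{i}})}$ is $Y^{\ast}$-invariant, and $CY^{\ast}W_{\Psi_{2}}=CW_{\Psi_{2}}X=W_{\Psi_{1}}X=Y^{\ast}W_{\Psi_{1}}=Y^{\ast}CW_{\Psi_{2}}$ together with the density of $R(W_{\Psi_{2}})$ in $\overline{R(W_{\Psi_{2}})}$ shows that $C$ intertwines $Y^{\ast}|\overline{R(W_{\Psi_{2}})}$ and $Y^{\ast}|\overline{R(W_{\Psi_{1}})}$. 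Since $Y^{\ast}=S_{E}^{\ast}\oplus S_{F}^{\ast}$ is a co-isometry on $H_{E\oplus F}^{2}$ and these subspaces are $Y^{\ast}$-invariant, Theorem~\ref{lifting}(ii) lifts $C$ to $\widetilde{C}$ on $H_{E\oplus F}^{2}$ with $\widetilde{C}Y^{\ast}=Y^{\ast}\widetilde{C}$, $\Vert\widetilde{C}\Vert\leq1$ and $W_{\Psi_{1}}=\widetilde{C}W_{\Psi_{2}}$; then $\widetilde{C}^{\ast}$ commutes with the shift $S_{E}\oplus S_{F}$, so $\widetilde{C}^{\ast}=T_{\Omega}$ for a contractive $\Omega\in H_{B(E\oplus F)}^{\infty}$, and $W_{\Psi_{1}}=T_{\Omega}^{\ast}W_{\Psi_{2}}$. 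The converse is the one-line estimate $W_{\Psi_{1}}^{\ast}W_{\Psi_{1}}=W_{\Psi_{2}}^{\ast}T_{\Omega}T_{\Omega}^{\ast}W_{\Psi_{2}}\leq W_{\Psi_{2}}^{\ast}W_{\Psi_{2}}$. The step I expect to demand the most care is the first one: keeping the adjoints, the compression by $P_{1}$, and the $H^{\infty}$-indeterminacy of the Hankel symbols all aligned so that $W$ really is of the form $W_{\Psi}$ with $\Psi\in\Lambda_{E\oplus F}^{\infty}$ and so that the resulting distance is \emph{literally} the infimum in the statement; everything else is a faithful transcription of Theorem~\ref{vfphic} and Proposition~\ref{inequality}.
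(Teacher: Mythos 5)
Your proposal is correct and is essentially the argument the paper intends: the paper omits the proof, stating only that it "can be proved similarly as Theorem \ref{vfphic} and Proposition \ref{inequality}", and your adjoint transcription (dilating $X^{\ast}=S_{E}^{\ast}\oplus S_{F}$ to $M_{z}^{\ast}\oplus S_{F}$ on $L_{E}^{2}\oplus H_{F}^{2}$, applying commutant lifting, then Douglas' lemma plus lifting for the order relation) is exactly that similar proof. Only cosmetic points: Lemma \ref{multi} is invoked in the slightly more general form with symbols in $L_{B(F,E)}^{\infty}$ (as the paper itself already does), and the letter $C$ is used both for a symbol block and the Douglas contraction, which is worth renaming.
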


\section{The connection of invariant subspaces of $S_{E}\oplus S_{F}^{\ast}$
and $B_{E}\oplus B_{F}$}

The obvious invariant subspaces $M$ of $X = S_{E}\oplus S_{F}^{\ast}$ are of
the form $M_{1}\oplus M_{2}$ where $M_{1}$ is an invariant subspace of
$M_{z},$ $M_{2}$ is an invariant subspace of $T_{z}^{\ast}.$ Those $M$ are
splitting invariant subspaces of $X.$ Recall an operator-valued analytic
function $\Theta\in H_{B(E,F)}^{\infty}$ is left inner if $\Theta^{\ast
}(z)\Theta(z)=I_{E}$ a.e. $z \in\mathbb{T}$. Let $K_{\Theta} =H_{F}^{2}%
\ominus\Theta H_{E}^{2}$ denote the model space. The $M_{1},M_{2}$ are
well-known by BLH Theorem. We say an invariant subspace $M$ of an operator $T$
on $H$ is a nontrivial invariant subspace if $M \neq\{0\}, H$.

\begin{theorem}
\label{BLH} (Beurling-Lax-Halmos theorem) (i) Let $M_{1}$ be a nontrivial
invariant subspace of $T_{z}$ on $H_{E}^{2}.$ Then $M_{1}=\Theta H_{E_{1}}%
^{2}=T_{\Theta}H_{E_{1}}^{2}$, where $E_{1}\subseteq E,$ $\Theta\in
H_{B(E_{1},E)}^{\infty}$ is left inner. \newline(ii) Let $M_{2}$ be a
nontrivial invariant subspace of $T_{z}^{*}$ on $H_{F}^{2}.$ Then
$M_{2}=K_{\Theta}=H_{F}^{2}\ominus\Theta H_{F_{1}}^{2},$ where $F_{1}\subseteq
F,$ $\Theta\in H_{B(F_{1},F)}^{\infty}$ is left inner.
\end{theorem}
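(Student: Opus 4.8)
The plan is to reduce part (ii) to part (i) and to prove (i) by a Wold decomposition argument followed by a pointwise orthonormalization. For (ii): $M_{2}$ is $T_{z}^{\ast}$-invariant in $H_{F}^{2}$ if and only if $M_{2}^{\perp}$ is $T_{z}$-invariant, and $M_{2}^{\perp}$ is nontrivial since $M_{2}$ is; applying (i) to $M_{2}^{\perp}$ gives $M_{2}^{\perp}=\Theta H_{F_{1}}^{2}$ for some $F_{1}\subseteq F$ and left inner $\Theta\in H_{B(F_{1},F)}^{\infty}$, hence $M_{2}=H_{F}^{2}\ominus\Theta H_{F_{1}}^{2}=K_{\Theta}$.

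For (i), first note that $T_{z}|_{M_{1}}$ is an isometry, so the Wold decomposition gives $M_{1}=\bigl(\bigoplus_{n\geq0}z^{n}\mathcal{W}\bigr)\oplus M_{\infty}$, where $\mathcal{W}=M_{1}\ominus zM_{1}$ is the wandering subspace and $M_{\infty}=\bigcap_{n\geq0}z^{n}M_{1}$. Since $M_{\infty}\subseteq\bigcap_{n\geq0}z^{n}H_{E}^{2}=\{0\}$ (a nonzero $H_{E}^{2}$ function cannot vanish to infinite order at $0$), we get $M_{1}=\bigoplus_{n\geq0}z^{n}\mathcal{W}$, and nontriviality of $M_{1}$ forces $\mathcal{W}\neq\{0\}$. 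The defining relation $\langle w_{1},z^{k}w_{2}\rangle=0$ for all $k\geq1$ and $w_{1},w_{2}\in\mathcal{W}$ implies that the function $z\mapsto\langle w_{1}(z),w_{2}(z)\rangle_{E}$, which lies in $L^{1}(\mathbb{T})$, has vanishing Fourier coefficients at every nonzero index and therefore equals $\langle w_{1},w_{2}\rangle_{H_{E}^{2}}$ a.e. Fixing an orthonormal basis $\{e_{\alpha}\}$ of $\mathcal{W}$, this says $\{e_{\alpha}(z)\}$ is an orthonormal family in $E$ for a.e.\ $z\in\mathbb{T}$; in particular $\dim\mathcal{W}\leq\dim E$, so we may choose a closed subspace $E_{1}\subseteq E$, $E_{1}\neq\{0\}$, with an orthonormal basis $\{\varepsilon_{\alpha}\}$ of the same cardinality, and define $\Theta(z)\varepsilon_{\alpha}:=e_{\alpha}(z)$.

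It remains to check that $\Theta\in H_{B(E_{1},E)}^{\infty}$ is left inner and that $\Theta H_{E_{1}}^{2}=M_{1}$. By construction $\Theta(z)$ is isometry-valued a.e., so $\|\Theta\|_{\infty}=1$ and $\Theta(z)^{\ast}\Theta(z)=I_{E_{1}}$ a.e.; weak measurability follows from $e_{\alpha}\in H_{E}^{2}$, and since for every $y\in E$ the Fourier coefficients of $z\mapsto\langle\Theta(z)\varepsilon_{\alpha},y\rangle_{E}=\langle e_{\alpha}(z),y\rangle_{E}$ are supported on $\{0,1,2,\dots\}$, $\Theta$ is analytic, hence lies in $H_{B(E_{1},E)}^{\infty}$. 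Finally, for $h=\sum_{\alpha}h_{\alpha}\varepsilon_{\alpha}\in H_{E_{1}}^{2}$ with $h_{\alpha}\in H^{2}$ one has $T_{\Theta}h=\Theta h=\sum_{\alpha}h_{\alpha}e_{\alpha}$, and since $\{z^{n}e_{\alpha}\}_{n\geq0,\alpha}$ is an orthonormal system whose closed span is $\bigoplus_{n\geq0}z^{n}\mathcal{W}=M_{1}$, the map $h\mapsto\Theta h$ is unitary from $H_{E_{1}}^{2}$ onto $M_{1}$; thus $M_{1}=\Theta H_{E_{1}}^{2}=T_{\Theta}H_{E_{1}}^{2}$. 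I expect the main technical obstacle to be exactly this last bookkeeping---verifying that the pointwise-defined $\Theta$ is a genuine bounded analytic operator-valued function and matching the abstract sum $\bigoplus_{n\geq0}z^{n}\mathcal{W}$ with the subspace $\Theta H_{E_{1}}^{2}$---both of which hinge on the a.e.\ orthonormality established above; the Wold decomposition and the triviality of $\bigcap_{n\geq0}z^{n}H_{E}^{2}$ are standard.
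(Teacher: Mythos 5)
Your proof is correct. Note, however, that the paper does not prove this statement at all: Theorem \ref{BLH} is quoted as the classical Beurling--Lax--Halmos theorem with citations to Beurling, Lax and Halmos, and is used as a black box throughout. What you have written is the standard Halmos wandering-subspace proof, and all the steps check out: the reduction of (ii) to (i) by orthocomplementation is immediate; the Wold decomposition of the isometry $T_{z}|_{M_{1}}$ together with $\bigcap_{n\geq0}z^{n}H_{E}^{2}=\{0\}$ kills the unitary part; and the Fourier-coefficient argument showing $\langle w_{1}(z),w_{2}(z)\rangle_{E}$ is a.e.\ constant is exactly the right mechanism for transferring orthogonality of the wandering subspace to pointwise orthonormality. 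Two small points worth making explicit: the simultaneous a.e.\ orthonormality of $\{e_{\alpha}(z)\}$ uses that the basis is countable, which is guaranteed by the paper's standing assumption that $E$ is separable; and the fact that $\Theta h=\sum_{\alpha}h_{\alpha}e_{\alpha}$ makes sense for $h_{\alpha}\in H^{2}$ rests on $\|e_{\alpha}(z)\|_{E}=1$ a.e., i.e.\ on the pointwise isometry you established, so the ``bookkeeping'' you flag at the end is genuinely carried by that a.e.\ orthonormality and nothing else is hidden.
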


We recall the invariant subspace theorem for the bilateral shifts $B_{E}$ by
Helson and Lowdenslager \cite{HelsonL} which is a generalization of
BLH\ Theorem and Wiener's result on doubly invariant subspaces of a scalar
bilateral shift. We first introduce the concept of range functions.

\begin{definition}
A range function $K$ of $E$ is a function on the circle $\mathbb{T}$ taking
values in the closed subspaces of $E$. $K$ is measurable if the orthogonal
projection $P(z)$ on $K(z)$ is measurable in the operator sense, that is,
$\left\langle P(z)e_{1},e_{2}\right\rangle $ is a measurable scalar function
for all $e_{1},e_{2}\in E.$ Range functions which are equal almost everywhere
are identified. For each measurable range function $K$ of $E$, let $M_{K}$ be
the set of all functions $f$ in $L_{E}^{2}$ such that $f(z)$ lies in $K(z)$
almost everywhere.
\end{definition}

We say an operator-valued function $U\in L_{B(E_{0},E)}^{\infty}$ is
isometry-valued if $E_{0} \neq\{0\}$ and $U(z)^{*}U(z) = I_{E_{0}}$ for a.e.
$z \in\mathbb{T}$, and unitary-valued if $U(z)^{*}U(z) = I_{E_{0}}$ and
$U(z)U(z)^{*}= I_{E}$ for a.e. $z \in\mathbb{T}$. In this paper, when we write
$U(z)^{*}U(z) = I_{E_{0}}$ for a.e. $z \in\mathbb{T}$, we always have $E_{0}
\neq\{0\}$. The following invariant subspace theorem is key to our study of
invariant subspaces of $X.$

\begin{theorem}
\label{Helson}(Page 61 and Page 64 \cite{Henry}) Let $M$ be an invariant
subspace of $M_{z}$ on $L_{E}^{2}.$ Then $M=UH_{E_{0}}^{2}\oplus M_{K}$ where
$E_{0}\subseteq E,$ $U\in L_{B(E_{0},E)}^{\infty}$ is zero or isometry-valued
function, and $K$ is a measurable range function of $E$ such that $K(z)\perp
U(z)E_{0}$ for a.e. $z\in\mathbb{T}$. Furthermore, $U_{1}H_{E_{0}}^{2}%
=U_{2}H_{E_{1}}^{2},$ where $E_{0}\subseteq E$, $E_{1}\subseteq E,$ $U_{1}$
and $U_{2}$ are isometry-valued functions, if and only if there exists a
constant unitary $W:E_{0}\rightarrow E_{1}$ such that $U_{1}=U_{2}W.$
\end{theorem}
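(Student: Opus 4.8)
The plan is to split $M$ into its doubly invariant (reducing) part and a simply invariant remainder, handle each by a classical argument, and then reconcile the two pieces; the uniqueness clause is a short geometric calculation at the end.

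\textbf{Reducing part.} First set $M_{\infty}:=\bigcap_{n\ge 0}z^{n}M$. Since $M_{z}$ is unitary on $L_{E}^{2}$ and the closed subspaces $z^{n}M$ decrease, $M_{\infty}$ is closed and $zM_{\infty}=M_{\infty}$, so $M_{\infty}$ reduces $M_{z}$. The orthogonal projection onto a reducing subspace commutes with $M_{z}$ and $M_{z}^{\ast}$, hence with $M_{\varphi}$ for every $\varphi\in L^{\infty}$; the operator-valued form of Wiener's theorem then produces a measurable range function $K$ of $E$ with $M_{\infty}=M_{K}$. I expect this to be the step that carries the measure-theoretic weight, and I would record here the auxiliary fact (needed later) that $K(z)=\overline{\operatorname{span}}\{g(z):g\in D\}$ for a.e.\ $z$, where $D$ is any countable dense subset of $M_{K}$.

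\textbf{Simply invariant part.} Put $N:=M\ominus M_{\infty}$. Since $M_{\infty}$ reduces $M_{z}$ and $M$ is invariant, $N$ is invariant, and $\bigcap_{n\ge 0}z^{n}N\subseteq M_{\infty}\cap N=\{0\}$. If $N=\{0\}$ take $E_{0}=\{0\}$ and $U=0$; otherwise $M_{z}|N$ is a pure isometry, so by the Wold decomposition $N=\bigoplus_{n\ge 0}z^{n}\mathcal{W}$ with wandering subspace $\mathcal{W}:=N\ominus zN\neq\{0\}$. For $f,g\in\mathcal{W}$ and $k\neq 0$ one has $\langle z^{k}f,g\rangle_{L_{E}^{2}}=0$ (using $z^{|k|}\mathcal{W}\perp\mathcal{W}$), so every nonzero Fourier coefficient of $z\mapsto\langle f(z),g(z)\rangle$ vanishes and this function equals the constant $\langle f,g\rangle_{L_{E}^{2}}$ a.e.; in particular $\|f(z)\|$ is a.e.\ constant. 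Fixing an orthonormal basis $\{e_{\alpha}\}$ of $\mathcal{W}$, letting $E_{0}=\ell^{2}(\{\alpha\})$ with orthonormal basis $\{\varepsilon_{\alpha}\}$, and defining $U(z)\varepsilon_{\alpha}:=e_{\alpha}(z)$, the a.e.\ identity $\langle e_{\alpha}(z),e_{\beta}(z)\rangle=\delta_{\alpha\beta}$ shows that, off a null set, $U(z)$ extends to an isometry $E_{0}\to E$; thus $U\in L_{B(E_{0},E)}^{\infty}$ is isometry-valued, and $M_{U}$ carries the orthonormal basis $\{z^{n}\varepsilon_{\alpha}\}$ of $H_{E_{0}}^{2}$ onto the orthonormal basis $\{z^{n}e_{\alpha}\}$ of $N$, whence $N=UH_{E_{0}}^{2}$. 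Combining, $M=UH_{E_{0}}^{2}\oplus M_{K}$.

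\textbf{Orthogonality.} To see $K(z)\perp U(z)E_{0}$ a.e.: if $g\in M_{\infty}$ then $\langle z^{k}e_{\alpha},g\rangle=0$ for all $k\in\mathbb{Z}$ (for $k\ge 0$ because $N\perp M_{\infty}$, for $k<0$ because $z^{-k}g\in M_{\infty}$), so all Fourier coefficients of $z\mapsto\langle e_{\alpha}(z),g(z)\rangle$ vanish and it is $0$ a.e.; running $g$ over a countable dense subset of $M_{\infty}$ and invoking the span description of $K$ from the first step yields $U(z)E_{0}\perp K(z)$ a.e.

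\textbf{Uniqueness.} Since $M_{U_{i}}$ is an isometry, $(U_{i}H_{E_{i}}^{2})\ominus z(U_{i}H_{E_{i}}^{2})=M_{U_{i}}(E_{i})=U_{i}E_{i}$ (constant functions with values in $E_{i}$), so $U_{1}H_{E_{0}}^{2}=U_{2}H_{E_{1}}^{2}$ forces $U_{1}E_{0}=U_{2}E_{1}$ as subspaces of $L_{E}^{2}$. For $\xi\in E_{0}$ there is a unique $\eta\in E_{1}$ with $U_{1}(\cdot)\xi=U_{2}(\cdot)\eta$; set $W\xi:=\eta$. Uniqueness makes $W$ linear; $\|W\xi\|_{E_{1}}=\|U_{2}W\xi\|_{L^{2}}=\|U_{1}\xi\|_{L^{2}}=\|\xi\|_{E_{0}}$ makes it isometric; the symmetric argument makes it onto; hence $W:E_{0}\to E_{1}$ is a constant unitary with $U_{1}=U_{2}W$. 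The converse is immediate, since for a constant unitary $W$ the operator $M_{W}$ maps $H_{E_{0}}^{2}$ onto $H_{E_{1}}^{2}$. The main obstacle throughout is the measure-theoretic bookkeeping of the first step—identifying reducing subspaces with the $M_{K}$ and recovering $K(z)$ from pointwise values of a dense set—while the rest is Hilbert-space geometry (the Wold decomposition of $M_{z}|N$ and the constant-norm computation).
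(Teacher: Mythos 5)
The paper does not actually prove this theorem---it is quoted directly from Helson's \emph{Lectures on Invariant Subspaces}---and your argument is a correct reconstruction of the classical proof given there: split off the reducing part $\bigcap_{n\ge 0}z^{n}M$ and identify it as an $M_{K}$ via the operator-valued Wiener theorem, apply the Wold decomposition to the remainder and read $U$ off the wandering subspace using the a.e.-constancy of $\langle f(z),g(z)\rangle$, then get uniqueness from the intrinsic identification $U H_{E_{0}}^{2}\ominus z\,U H_{E_{0}}^{2}=U E_{0}$. The only cosmetic gap is that you construct $E_{0}$ as an abstract $\ell^{2}$ space rather than as a subspace of $E$ as the statement requires; since $U(z)$ embeds $E_{0}$ isometrically into $E$ (so $\dim E_{0}\le\dim E$), one simply conjugates by a fixed unitary identification, exactly as in the uniqueness clause.
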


The subspace $M_{K}$ is doubly invariant (equivalently reducing) in the sense
that $zM_{K}\subset M_{K}$ and $z^{-1}M_{K}=\overline{z}M_{K}\subset M_{K}.$
So $zM_{K}=M_{K}.$

To study the invariant subspaces of $X$, we make the following reductions.
Let
\[
D_{1}=\left[
\begin{array}
[c]{cc}%
M_{z} & 0\\
0 & T_{z}^{\ast}%
\end{array}
\right]  :L_{E}^{2}\oplus H_{F}^{2}\rightarrow L_{E}^{2}\oplus H_{F}^{2}.
\]
If $N$ is an invariant subspace of $X,$ $N$ is also an invariant subspace of
$D_{1}$ since for $h\in M,$ $D_{1}h=Xh\in M.$ This observation reduces the
study of an invariant subspace $N$ of $X$ to an invariant subspace $N_{1}$ of
$D_{1}$ such that $N_{1}\subset H_{E}^{2}\oplus H_{F}^{2}.$

Let
\[
D_{2}=\left[
\begin{array}
[c]{cc}%
M_{z} & 0\\
0 & T_{z}%
\end{array}
\right]  :L_{E}^{2}\oplus H_{F}^{2}\rightarrow L_{E}^{2}\oplus H_{F}^{2}.
\]
Let $J_{1}:L_{E}^{2}\rightarrow L_{E}^{2}$ be the unitary operator defined by
\[
J_{1}f(z)=f(\overline{z}),f \in L_{E}^{2}.
\]
$J_{1}^{2}=I,$ $J_{1}^{\ast}=J_{1}.$ Note that
\[
M_{z}J_{1}=J_{1}M_{z}^{\ast}\text{ on }L_{E}^{2}.
\]
Thus
\[
D_{1}J_{2}=\left[
\begin{array}
[c]{cc}%
M_{z} & 0\\
0 & T_{z}^{\ast}%
\end{array}
\right]  \left[
\begin{array}
[c]{cc}%
J_{1} & 0\\
0 & I
\end{array}
\right]  =J_{2}\left[
\begin{array}
[c]{cc}%
M_{z}^{\ast} & 0\\
0 & T_{z}^{\ast}%
\end{array}
\right]  =J_{2}D_{2}^{\ast}\text{ where }J_{2}=\left[
\begin{array}
[c]{cc}%
J_{1} & 0\\
0 & I
\end{array}
\right]  .
\]
Note that $N_{1}$ is an invariant subspace of $D_{1}$ if and only if
$N_{2}:=J_{2}N_{1}$ is an invariant subspace of $D_{2}^{\ast}.$ This
observation reduces the study of an invariant subspace $N_{1}$ of $D_{1}$ to
an invariant subspace $N_{2}$ of $D_{2}^{\ast}$ such that $N_{1}=J_{2}%
N_{2}\subset H_{E}^{2}\oplus H_{F}^{2}$ or $N_{2}\subset\overline{H_{E}^{2}%
}\oplus H_{F}^{2}.$ Equivalently, $N_{2}^{\prime}=\left(  L_{E}^{2}\oplus
H_{F}^{2}\right)  \ominus N_{2}$ is an invariant subspace of $D_{2}$ such that
$N_{2}^{\prime}\supset zH_{E}^{2}.$ Let%
\[
D_{3}=\left[
\begin{array}
[c]{cc}%
M_{z} & 0\\
0 & M_{z}%
\end{array}
\right]  :L_{E}^{2}\oplus L_{F}^{2}\rightarrow L_{E}^{2}\oplus L_{F}^{2}%
\]
Then an invariant subspace $N_{2}^{\prime}$ of $D_{2}$ is an invariant
subspace $N_{3}$ of $D_{3}$ such that $N_{3}\subset L_{E}^{2}\oplus H_{F}%
^{2}.$ We state these observations as a theorem.

\begin{theorem}
\label{main}Let $N$ be an invariant subspace of $X$. Then there exists an
invariant subspace $N_{3}$ of $M_{z}\oplus M_{z}$ on $L_{E}^{2}\oplus
L_{F}^{2}$ such that $N=J_{2}\left[  \left(  L_{E}^{2}\oplus H_{F}^{2}\right)
\ominus N_{3}\right]  = \left(  L_{E}^{2}\oplus H_{F}^{2}\right)  \ominus
J_{2}N_{3} $ where%
\begin{equation}
zH_{E}^{2}\subset N_{3}\subset L_{E}^{2}\oplus H_{F}^{2}. \label{twocond}%
\end{equation}
Conversely, if $N_{3}$ is an invariant subspace of $M_{z}\oplus M_{z}$ on
$L_{E}^{2}\oplus L_{F}^{2}$ such that (\ref{twocond}) holds, then
$J_{2}\left[  \left(  L_{E}^{2}\oplus H_{F}^{2}\right)  \ominus N_{3}\right]
$ is an invariant subspace of $X.$
\end{theorem}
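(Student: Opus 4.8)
## Proof Proposal for Theorem \ref{main}

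The plan is to simply chain together the four reduction steps that have already been set up in the text and verify that each is a genuine equivalence preserving the relevant containment conditions. I would organize the proof as a sequence of four ``if and only if'' passages, tracking a single subspace through the operators $X \rightsquigarrow D_1 \rightsquigarrow D_2^\ast \rightsquigarrow D_2 \rightsquigarrow D_3$, and making sure at each stage that I record exactly which side-condition the image subspace inherits.

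First I would handle the passage from $X$ to $D_1$. Here the key observation is that $D_1$ agrees with $X$ on the subspace $H_E^2 \oplus H_F^2$: for $h \in H_E^2 \oplus H_F^2$ one has $D_1 h = Xh$ because $M_z$ restricted to $H_E^2$ is $T_z = S_E$. Hence a closed subspace $N \subseteq H_E^2 \oplus H_F^2$ is $X$-invariant if and only if it is $D_1$-invariant; in the forward direction $X$-invariance plus $N \subseteq H_E^2 \oplus H_F^2$ gives $D_1 N = X N \subseteq N$, and conversely $D_1$-invariance together with $N \subseteq H_E^2 \oplus H_F^2$ forces $X N = D_1 N \subseteq N$. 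So I get an invariant subspace $N_1 = N$ of $D_1$ with $N_1 \subseteq H_E^2 \oplus H_F^2$. Next, using $J_2 = J_1 \oplus I$ and the intertwining $D_1 J_2 = J_2 D_2^\ast$ already displayed, $N_1$ is $D_1$-invariant iff $N_2 := J_2 N_1$ is $D_2^\ast$-invariant; and since $J_1$ maps $H_E^2$ onto $\overline{H_E^2}$ (note $J_1 f(z) = f(\bar z)$ swaps $H_E^2$ and $\overline{H_E^2}$, unlike $J$), the condition $N_1 \subseteq H_E^2 \oplus H_F^2$ becomes $N_2 \subseteq \overline{H_E^2} \oplus H_F^2$. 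Taking orthogonal complements inside $L_E^2 \oplus H_F^2$, $N_2$ is $D_2^\ast$-invariant iff $N_2' := (L_E^2 \oplus H_F^2) \ominus N_2$ is $D_2$-invariant, and $N_2 \subseteq \overline{H_E^2} \oplus H_F^2$ translates to $N_2' \supseteq (L_E^2 \oplus H_F^2) \ominus (\overline{H_E^2} \oplus H_F^2) = z H_E^2$. Finally, since $D_2$ is the restriction of $D_3 = M_z \oplus M_z$ to the $D_3$-invariant subspace $L_E^2 \oplus H_F^2$, a subspace $N_2' \subseteq L_E^2 \oplus H_F^2$ is $D_2$-invariant iff, setting $N_3 := N_2'$, it is $D_3$-invariant with $N_3 \subseteq L_E^2 \oplus H_F^2$; combining with the previous step this gives exactly the two conditions $z H_E^2 \subseteq N_3 \subseteq L_E^2 \oplus H_F^2$ of (\ref{twocond}).

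Unwinding the definitions, $N_3 = N_2' = (L_E^2 \oplus H_F^2) \ominus N_2 = (L_E^2 \oplus H_F^2) \ominus J_2 N_1 = (L_E^2 \oplus H_F^2) \ominus J_2 N$, and since $J_2$ is a unitary involution fixing $L_E^2 \oplus H_F^2$ setwise, applying $J_2$ gives $N = J_2\bigl[(L_E^2 \oplus H_F^2) \ominus N_3\bigr] = (L_E^2 \oplus H_F^2) \ominus J_2 N_3$, which is the asserted formula. The converse direction is obtained by reading the same four equivalences backwards: starting from a $D_3$-invariant $N_3$ satisfying (\ref{twocond}), one recovers $N_2' = N_3$ is $D_2$-invariant, then $N_2 = (L_E^2 \oplus H_F^2) \ominus N_3$ is $D_2^\ast$-invariant and contained in $\overline{H_E^2} \oplus H_F^2$, then $N_1 = J_2 N_2$ is $D_1$-invariant and contained in $H_E^2 \oplus H_F^2$, and finally $N = N_1$ is $X$-invariant.

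I do not anticipate a serious obstacle; the content of the theorem is precisely that all four reductions are reversible, so the ``proof'' is really a matter of bookkeeping. The one point requiring a little care is the translation of the membership constraints across the unitary $J_2$ and across the orthogonal-complement operation — in particular getting the direction of the containment right (a subspace lying \emph{inside} $\overline{H_E^2} \oplus H_F^2$ corresponds, after complementing, to one \emph{containing} $z H_E^2$), and checking that $(L_E^2 \oplus H_F^2) \ominus (\overline{H_E^2} \oplus H_F^2)$ is indeed $z H_E^2$ rather than $H_E^2$. Since all of these facts are recorded in the discussion preceding the statement, the write-up can be kept to a few lines citing Theorem \ref{Helson} only implicitly (it is not actually needed here — only the elementary intertwining identities are used), and the real use of Helson--Lowdenslager comes in the subsequent sections when one actually describes $N_3$.
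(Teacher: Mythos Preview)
Your proposal is correct and follows essentially the same approach as the paper: the paper does not give a formal proof of this theorem at all, but rather states it as a summary of the four reduction steps $X \rightsquigarrow D_1 \rightsquigarrow D_2^\ast \rightsquigarrow D_2 \rightsquigarrow D_3$ discussed in the paragraphs immediately preceding it (``We state these observations as a theorem''). Your write-up is simply a careful expansion of those same observations, with the equivalences and side-conditions tracked explicitly in both directions.
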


The above theorem establishes a one-to-one correspondence between invariant
subspaces $N$ of $X$ and invariant subspaces $N_{3}$ of $M_{z}\oplus M_{z}$
satisfying (\ref{twocond}).

Let $P_{E}$ and $P_{F}$ be the projection from $E\oplus F$ onto $E$ and $F,$
respectively. For $U \in L_{B(E_{0},E\oplus F)}^{\infty},$ set
\[
U_{E}(z) =P_{E}U(z) ,U_{F}(z) =P_{F}U(z).
\]
Then $U_{E} \in L_{B(E_{0},E)}^{\infty}, U_{F}\in L_{B(E_{0},F)}^{\infty}$.

\begin{lemma}
\label{condition}Let $N_{3}$ be an invariant subspace of $M_{z}\oplus M_{z}$
on $L_{E}^{2}\oplus L_{F}^{2}.$ Write $N_{3}=UH_{E_{0}}^{2}\oplus M_{K}$ where
$E_{0}\subseteq E\oplus F,$ $U\in L_{B(E_{0},E\oplus F)}^{\infty}$ is an
isometry-valued function, and $K$ is a measurable range function.\newline(i)
$N_{3}\subset L_{E}^{2}\oplus H_{F}^{2}$ if and only if
\begin{equation}
M_{K}\subset L_{E}^{2},U_{F} \in H_{B(E_{0},F)}^{\infty}, K(z)\perp
U_{E}(z)E_{0}\text{ for a.e. }z\in\mathbb{T}. \label{inside}%
\end{equation}
(ii) $zH_{E}^{2}\subset N_{3}$ if and only if $zE\subset N_{3}.$\newline(iii)
If (\ref{twocond}) holds, then $K(z)\oplus U_{E}(z)E_{0}=E$ for a.e.
$z\in\mathbb{T}.$
\end{lemma}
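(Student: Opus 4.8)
The plan is to unwind each of the three assertions directly from the Helson--Lowdenslager decomposition $N_{3}=UH_{E_{0}}^{2}\oplus M_{K}$, using that this is an \emph{orthogonal} direct sum with $K(z)\perp U(z)E_{0}$ a.e., and that $L_{E}^{2}\oplus H_{F}^{2}$ is itself the subspace of $L_{E\oplus F}^{2}$ whose $F$-component lies in $H_{F}^{2}$. For part (i), first I would observe that $N_{3}\subset L_{E}^{2}\oplus H_{F}^{2}$ means exactly that $P_{F}f\in H_{F}^{2}$ for every $f\in N_{3}$, equivalently $QP_{F}f=0$ for all $f$, where $Q$ is the projection of $L_{F}^{2}$ onto $\overline{zH_{F}^{2}}$. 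Splitting $f=Uh\oplus g$ with $h\in H_{E_{0}}^{2}$, $g\in M_{K}$: the $F$-component is $U_{F}h+P_{F}g$. Testing on $h=0$ forces $M_{K}\subset L_{E}^{2}$ (the $F$-component of every element of $M_{K}$ must itself lie in $H_{F}^{2}$, but $M_{K}$ is reducing for $M_{z}\oplus M_{z}$, so its $F$-projection is a reducing subspace of $L_{F}^{2}$ contained in $H_{F}^{2}$, hence $\{0\}$); then testing on $g=0$ with $h$ ranging over $H_{E_{0}}^{2}$ forces $U_{F}H_{E_{0}}^{2}\subset H_{F}^{2}$, i.e.\ $U_{F}\in H_{B(E_{0},F)}^{\infty}$ (multiply by the constant functions $h\in E_{0}$ and by $z^{n}$). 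The condition $K(z)\perp U_{E}(z)E_{0}$ a.e.\ is just the $E$-component of the orthogonality $K(z)\perp U(z)E_{0}$ once $M_{K}\subset L_{E}^{2}$ and $U_{F}$ is analytic — actually it is automatic from $K(z)\perp U(z)E_{0}$ together with $M_K \subset L_E^2$. Conversely, if (\ref{inside}) holds, then for $f=Uh\oplus g$ the $F$-component is $U_{F}h\in H_{F}^{2}$ (since $U_F$ analytic and $g\in L_E^2$ contributes nothing), so $f\in L_{E}^{2}\oplus H_{F}^{2}$.

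For part (ii), one direction ($zE\subset N_{3}$ follows from $zH_{E}^{2}\subset N_{3}$) is trivial since $zE\subset zH_{E}^{2}$. For the other, if $zE\subset N_{3}$ then applying $M_{z}\oplus M_{z}$ (under which $N_{3}$ is invariant) repeatedly gives $z^{n}E\subset N_{3}$ for all $n\geq 1$, and taking closed linear span yields $zH_{E}^{2}=\overline{\mathrm{span}}\{z^{n}E:n\geq1\}\subset N_{3}$. For part (iii), assuming (\ref{twocond}): by (i) we have $K(z)\perp U_{E}(z)E_{0}$ a.e.\ and $M_{K}\subset L_{E}^{2}$, so $K(z)\subset E$ a.e., hence $K(z)\oplus U_{E}(z)E_{0}\subset E$ a.e. (the sum being orthogonal). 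For the reverse inclusion, use $zH_{E}^{2}\subset N_{3}$: I would argue fiberwise that for a.e.\ $z$, $E = K(z) \oplus U_E(z)E_0$. Concretely, $zH_{E}^{2}\subset N_{3}=UH_{E_{0}}^{2}\oplus M_{K}$ forces, on comparing the reducing (wandering-free) parts, that every $e\in E$, viewed via its contribution to the $z$-fiber, decomposes along $K(z)$ and $U_{E}(z)E_{0}$; the cleanest route is to note $L_{E}^{2}\ominus \overline{zH_{E}^{2}}$-type comparisons, or more simply to use that $M_{z}\oplus M_{z}$ restricted to $N_{3}$ decomposes into the shift part on $UH_{E_{0}}^{2}$ and the bilateral (reducing) part on $M_{K}$, and that $zH_{E}^{2}$ sits inside $N_{3}$ as a doubly-generated piece forcing the fiber $E$ to be spanned.

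I expect the main obstacle to be part (iii), specifically proving the \emph{surjectivity} $K(z)+U_{E}(z)E_{0}=E$ a.e.\ from the containment $zH_{E}^{2}\subset N_{3}$, because this requires a genuine fiberwise (pointwise in $z$) argument rather than a global Hilbert-space manipulation; one must pass from the $L^{2}$-level statement $zH_{E}^{2}\subset UH_{E_{0}}^{2}\oplus M_{K}$ to an a.e.\ statement about the ranges of the projections $U(z)U(z)^{*}$ and $P_{K}(z)$ in $E$. The tool for this is the measurable selection / direct-integral decomposition implicit in Helson--Lowdenslager: $M_{K}=\{f\in L^{2}_{E\oplus F}: f(z)\in K(z)\text{ a.e.}\}$ and $UH_{E_{0}}^{2}$ corresponds fiberwise to $U(z)E_{0}$, and the condition $zE\subset N_{3}$ (equivalent by (ii)) says that for each fixed $e\in E$ the constant-ish function $z\mapsto ze$ lies fiberwise in $K(z)\oplus U(z)E_{0}$, whence $e\in P_{E}(K(z)\oplus U(z)E_{0})=K(z)\oplus U_{E}(z)E_{0}$ (using $K(z),U(z)E_0 \subset E \oplus F$ and that the $F$-parts cancel appropriately); applying this to a countable dense set of $e$ and intersecting the corresponding full-measure sets gives the claim a.e.
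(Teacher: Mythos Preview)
Your proposal is correct and follows essentially the same approach as the paper's proof: for (i), both arguments exploit the double invariance of $M_{K}$ to force $M_{K}\subset L_{E}^{2}$, then test $UH_{E_0}^2$ against $\overline{z}^{n}F$ to get $U_{F}\in H^{\infty}$; part (ii) is identical; and for (iii), the paper does exactly what your final paragraph describes---write $ze=Uh+g$ with $h\in H_{E_{0}}^{2}$, $g\in M_{K}$ and read off $ze-U_{E}(z)h(z)\in K(z)$ a.e.---though the paper states this in one line and leaves the countable-dense-set step implicit. Your first attempt at (iii) (comparing ``reducing parts'' and ``doubly-generated pieces'') is too vague to count as a proof, but you correctly identify and spell out the actual argument in your discussion of the expected obstacle.
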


\begin{proof}
(i) Assume $N_{3}=U H_{E_{0}}^{2}\oplus M_{K}\subset L_{E}^{2}\oplus H_{F}%
^{2}.$ Let $h\in M_{K}.$ Then $h\perp\overline{z}^{n}f$ for all $n\geq1$ and
$f\in F.$ But $\overline{z}^{m}h\in M_{K}$ for all $m\geq n.$ Hence
$\overline{z}^{m}h\perp\overline{z}^{n}f$. Equivalently, $h\perp z^{m-n}f.$
Thus $h\perp L_{F}^{2}$ and $M_{K}\subset L_{E}^{2}.$ Now $K(z)\perp
U(z)E_{0}$ becomes $K(z)\perp P_{E}U(z)E_{0}.$ Furthermore, for $g\in
H^{2}_{E_{0}},$ $f\in F$ and $n\geq1,$
\[
0=\left\langle U g,\overline{z}^{n}f\right\rangle _{L_{E}^{2}\oplus L_{F}^{2}%
}=\int\left\langle P_{F}U(z)g(z) ,\overline{z}^{n}f(z)\right\rangle _{F}%
\frac{|dz|}{2\pi}=\left\langle U_{F}g ,\overline{z}^{n}f\right\rangle
_{L_{F}^{2}}.
\]
This implies that $U_{F} \in H_{B(E_{0},F)}^{\infty}$. Conversely, if
(\ref{inside}) holds, one can verify that $N_{3}\subset L_{E}^{2}\oplus
H_{F}^{2}.$

(ii) It is clear that $zH_{E}^{2}\subset N_{3}$ implies $zE\subset N_{3}.$
Conversely, assume $zE\subset N_{3}.$ Since $N_{3}$ is invariant under the
multiplication by $z,$ $z^{n+1}E\subset z^{n}N_{3}\subset N_{3}$ for all
$n\geq1.$ Thus $zH_{E}^{2}\subset N_{3}.$

(iii) For $e\in E,$ there exists $h\in H_{E_{0}}^{2}$ such that $ze-P_{E}%
U(z)h(z)\in K(z)$ for $z\in\mathbb{T}.$ Hence by (i) $K(z)\oplus U_{E}(z)E_{0}
= K(z)\oplus P_{E}U(z)E_{0} =E.$
\end{proof}

Let $N_{3}=U H_{E_{0}}^{2}\oplus M_{K}$ be such that (\ref{twocond}) holds. If
$U =0$, then $N_{3}=M_{K}\supset zH_{E}^{2}$ simply implies that $N_{3}%
=L_{E}^{2}.$
%So we assume $UH_{E_{0}}^{2}\neq\{0\}$ unless otherwise stated.
We make the following definition.

\begin{definition}
\label{typeiandii} Let $N$ be an invariant subspace of $X.$ We say $N$ is type
I if the corresponding $N_{3}=UH_{E_{0}}^{2}.$ We say $N$ is type II if the
corresponding $N_{3}=UH_{E_{0}}^{2}\oplus M_{K}$ where $K \neq0$.
\end{definition}

First we study the type I invariant subspace of $X.$ If $N_{3}=U H_{E_{0}}%
^{2}$ is such that $U $ is unitary-valued, then we have the following result.
Recall that for an operator-valued $A,$ we will use short-hand notation $A\in
H^{\infty}$ instead of $A\in H_{B(E,F)}^{\infty}$ where $B(E,F)$ is clear from
the context.

\begin{proposition}
\label{full}Assume $N_{3}=UH_{E_{0}}^{2}$ is such that $U \in L^{\infty
}_{B(E_{0},E\oplus F)}$ is unitary-valued, $E_{0} \subset E \oplus F$. Write%
\[
U(z)=\left[
\begin{array}
[c]{cc}%
U_{E}(z)\\
U_{F}(z)
\end{array}
\right]  :E_{0}\rightarrow E\oplus F
\]
Then $zH_{E}^{2}\subset N_{3}\subset L_{E}^{2}\oplus H_{F}^{2}$ if and only if
$U_{F}, zU_{E}^{*} \in H^{\infty}$.
\end{proposition}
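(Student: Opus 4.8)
The plan is to verify the equivalence by unwinding the two conditions in (\ref{twocond}) using the structure of the unitary-valued symbol $U$. Since $N_3 = UH_{E_0}^2$ is already of the form appearing in Lemma \ref{condition} with $M_K = \{0\}$ (so the range function is trivial), the containment $N_3 \subset L_E^2 \oplus H_F^2$ reduces by part (i) of that lemma simply to $U_F \in H^\infty_{B(E_0,F)}$; the other two conditions in (\ref{inside}) are vacuous here because $K = 0$. So the real content is to show that, given $U_F \in H^\infty$, the remaining condition $zH_E^2 \subset N_3$ is equivalent to $zU_E^* \in H^\infty$.

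For that, I would use Lemma \ref{condition}(ii): $zH_E^2 \subset N_3$ holds if and only if $zE \subset N_3 = UH_{E_0}^2$. Spelling this out, for each $e \in E$ there must exist $h \in H^2_{E_0}$ with $ze = U h$ in $L^2_{E\oplus F}$, i.e. $zU_E(z)^* \cdot$ (something) — more precisely, since $U(z)$ is unitary from $E_0$ onto $E\oplus F$ for a.e.\ $z$, we have $h(z) = U(z)^* (ze, 0)^T = z U_E(z)^* e$ pointwise a.e., and the requirement is exactly that this function lie in $H^2_{E_0}$ for every $e \in E$. Testing against $\bar z^n$ for $n \geq 1$ and using that $\{e\}$ ranges over a dense subset of $E$, this says $\int \langle z U_E(z)^* e, \bar z^n e_0\rangle |dz| = 0$ for all $n \geq 1$ and all $e, e_0$, which is precisely the statement that the $B(E,E_0)$-valued function $z \mapsto zU_E(z)^*$ has vanishing negative Fourier coefficients, i.e. $zU_E^* \in H^\infty_{B(E,E_0)}$. (Boundedness is free since $U$ is unitary-valued, hence $\|U_E(z)\| \leq 1$ a.e.) One must also check the converse direction, that $U_F \in H^\infty$ and $zU_E^* \in H^\infty$ together force both inclusions in (\ref{twocond}); the first inclusion is the computation just given run backwards, and $N_3 \subset L_E^2 \oplus H_F^2$ follows from Lemma \ref{condition}(i) applied with $K = 0$.

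I expect the main subtlety — rather than obstacle — to be the bookkeeping in passing between "$Uh$ represents $ze$ for some $h \in H^2_{E_0}$" and "$zU_E^*$ is analytic," specifically making sure the unitarity of $U(z)$ is used correctly to invert pointwise and that the density argument over $e \in E$ interacts properly with the a.e.\ identifications. A second point to handle carefully is the degenerate case $E_0 = \{0\}$, i.e. $U = 0$: then $N_3 = \{0\}$ cannot contain $zH_E^2$ unless $E = \{0\}$, and on the other side the condition $zU_E^* \in H^\infty$ is vacuously true but $U_F \in H^\infty$ is also vacuous, so the proposition should be read (consistently with the surrounding conventions) under the standing assumption $E_0 \neq \{0\}$ and the remark immediately preceding it that handles $U = 0$ separately. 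Modulo these conventions, the argument is a direct translation of the two membership conditions through Lemma \ref{condition} and the pointwise unitarity of $U$, with no appeal to deeper machinery needed.
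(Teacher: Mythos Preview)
Your proposal is correct and follows essentially the same route as the paper: use Lemma~\ref{condition}(i) to reduce $N_3\subset L_E^2\oplus H_F^2$ to $U_F\in H^\infty$, and use the pointwise unitarity of $U$ to rewrite $zf\in UH_{E_0}^2$ as $zU_E^*f=U^*(zf\oplus 0)\in H_{E_0}^2$. The only cosmetic difference is that the paper argues directly for all $f\in H_E^2$ rather than first reducing to constants via Lemma~\ref{condition}(ii) and then testing Fourier coefficients; both arrive at $zU_E^*\in H^\infty$ in the same way.
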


\begin{proof}
By (\ref{inside}), $N_{3}\subset L_{E}^{2}\oplus H_{F}^{2}$ if and only if
$U_{F} \in H^{\infty}$. Now $zH^{2}_{E} \subset N_{3}$ if and only if $\forall
f \in H^{2}_{E}, zf \in UH^{2}_{E_{0}}$, if and only if $zU_{E}^{*} f =
U^{*}(zf \oplus0) \in H^{2}_{E_{0}}$. Thus $zH_{E}^{2}\subset N_{3}$ if and
only if $zU_{E}^{*} \in H^{\infty}$.
\end{proof}

For the more general case $N_{3}=U H_{E_{0}}^{2}$ where $U$ is
isometry-valued, we first introduce the following assumption. Note that in
this case, if $zH_{E}^{2}\subset N_{3} = U H_{E_{0}}^{2}$, then by the same
argument as above, we necessarily have $zU_{E}^{*} \in H^{\infty}$, where
$U_{E}(z) =P_{E}U(z)$.

\begin{definition}
Let $U \in L_{B(E_{0},E\oplus F)}^{\infty}$ be isometry-valued with
$E_{0}\subset E\oplus F.$ Set $E_{0}^{\perp}=\left(  E\oplus F\right)  \ominus
E_{0}.$ We say $U $ is complementary isometry-valued if there exists $V \in
L_{B(E_{0}^{\perp},E\oplus F)}^{\infty}$ such that $\left[
\begin{array}
[c]{cc}%
U & V
\end{array}
\right]  $ is unitary-valued.
\end{definition}

When $E$ and $F$ are finite dimensional, if $U \in L_{B(E_{0},E\oplus
F)}^{\infty}$ is isometry-valued, then it is not hard to see that $U $ is
complementary isometry-valued. In this case, by applying a constant unitary
operator $W$ as in Theorem \ref{Helson}, if necessary we may assume
$E_{0}\subset E$ if $\dim E_{0}\leq\dim E$ and $E_{0}=E\oplus F_{1}$ where
$F_{1}\subset F$ if $\dim E_{0}\geq\dim E.$

For a matrix-valued function $A ,$ $\text{rank} A(z)$ is in general not a
constant function. But if either $A $ or $A^{\ast}\in H^{\infty},$ then
$\text{rank} A(z)$ is a constant function. This is probably known. We state
this as a lemma and include a proof.

\begin{lemma}
\label{fixed}Let $A $ be a (finite) matrix-valued function. If either $A $ or
$A^{\ast}\in H^{\infty},$ then $\text{rank} A(z)$ is a constant function for
a.e. $z \in\mathbb{T}$. If $\text{rank} A(z)=m,$ then there are $m$ fixed
columns (or $m$ fixed rows) of $A(z)$ that are linearly independent for a.e.
$z\in\mathbb{T}.$ Here fixed columns means independent of $z.$
\end{lemma}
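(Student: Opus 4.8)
\textbf{Proof proposal for Lemma \ref{fixed}.}

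The plan is to treat the two cases ($A$ analytic, or $A^{\ast}$ analytic) symmetrically, since $\operatorname{rank}A(z)=\operatorname{rank}A(z)^{\ast}$ and $A\in H^\infty$ iff the entries of $A$ are in $H^\infty$. So assume $A\in H_{B(\mathbb{C}^n,\mathbb{C}^k)}^{\infty}$ is an $k\times n$ matrix-valued function whose entries are bounded analytic functions on $\mathbb{D}$. First I would set $m=\max_{z\in\mathbb{D}}\operatorname{rank}A(z)$, where the maximum is taken over points of the open disk (the entries extend analytically to $\mathbb{D}$). Pick a point $z_0\in\mathbb{D}$ achieving this maximum; then some $m\times m$ minor of $A$, say the one using a fixed set $R$ of $m$ rows and a fixed set $C$ of $m$ columns, is nonzero at $z_0$. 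That minor is itself a bounded analytic function on $\mathbb{D}$ (a polynomial in the entries), hence is not identically zero, hence its zero set is a discrete subset of $\mathbb{D}$ and, crucially, its boundary values are nonzero a.e. on $\mathbb{T}$ (an $H^\infty$ function that is not identically zero cannot vanish on a set of positive measure, by the F. and M. Riesz / Privalov uniqueness theorem). Therefore for a.e. $z\in\mathbb{T}$ the columns of $A(z)$ indexed by $C$ are linearly independent, so $\operatorname{rank}A(z)\ge m$ for a.e. $z\in\mathbb{T}$.

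For the reverse inequality, I would argue that $\operatorname{rank}A(z)\le m$ for \emph{every} $z\in\mathbb{T}$ where $A(z)$ is defined via nontangential limits — or at least for a.e. such $z$. The clean way: every $(m+1)\times(m+1)$ minor of $A$ is a bounded analytic function on $\mathbb{D}$ that vanishes identically on $\mathbb{D}$ (since $\operatorname{rank}A(z)\le m$ throughout $\mathbb{D}$ by choice of $m$), hence its a.e.-defined boundary values vanish a.e. on $\mathbb{T}$. Taking the countable (here finite) intersection over all choices of $m+1$ rows and $m+1$ columns, we get that for a.e. $z\in\mathbb{T}$ all $(m+1)\times(m+1)$ minors of $A(z)$ vanish, i.e. $\operatorname{rank}A(z)\le m$ a.e. Combined with the previous paragraph, $\operatorname{rank}A(z)=m$ for a.e. $z\in\mathbb{T}$, and moreover the fixed column set $C$ (independent of $z$) exhibits $m$ linearly independent columns a.e.; replacing columns by rows and using $R$ gives the ``fixed rows'' version.

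I expect the only real subtlety to be the boundary-value step: passing from ``this $H^\infty$ function is not identically zero on $\mathbb{D}$'' to ``its nontangential boundary values are nonzero a.e. on $\mathbb{T}$,'' which is exactly the statement that a nonzero $H^\infty$ function has log-integrable, hence a.e.-nonzero, modulus on the circle. This is standard (e.g. a consequence of the canonical inner-outer factorization, or the F.\ and M.\ Riesz theorem), so the proof is short; one just needs to invoke it cleanly and apply it to finitely many minors at once so that the exceptional null set is a finite union of null sets. The passage in the other direction (minors vanishing identically on $\mathbb{D}$ forces them to vanish a.e.\ on $\mathbb{T}$) is even easier, just continuity of boundary values in $L^2$. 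No other obstacles arise; the matrices being finite is what keeps all the minor-index sets finite.
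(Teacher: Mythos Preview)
Your proof is correct and follows essentially the same idea as the paper: both arguments isolate a single $m\times m$ minor, observe it is a nonzero scalar $H^{\infty}$ function, and invoke the fact that such a function is nonzero a.e.\ on $\mathbb{T}$. The only cosmetic difference is that you define $m$ as the maximum rank over the open disk and then pass to boundary values, whereas the paper defines it as the essential maximum of $\operatorname{rank}A(z)$ directly on $\mathbb{T}$; your version is slightly more explicit about the upper bound (vanishing of all $(m+1)\times(m+1)$ minors), which the paper absorbs into the maximality of its chosen $k$.
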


\begin{proof}
Let $A\in H^{\infty}$ be a (finite) matrix-valued function. Let $k$ be the
maximum such that $\text{rank}A(z)=k$ for $z$ in a subset $\tau$ of
$\mathbb{T}$ with the measure of $\tau$ not zero. Then there exists a
submatrix $A_{k}(z)$ (by choosing $k$ rows and $k$ columns of $A(z)$
independent of $z$) such that $\det A_{k}(z)\neq0$ for $z\in\tau.$ But $\det
A_{k}\in H^{\infty},$ so $\det A_{k}(z)\neq0$ for a.e. $z\in\mathbb{T}$, and
hence $\text{rank}A(z)$ is a constant function on the circle.
\end{proof}

\begin{proposition}
\label{ab1}Let $N_{3}=U H_{E_{0}}^{2}$ be such that $E_{0}\subset E\oplus F$
and $U $ is complementary isometry-valued. Write $E_{0}^{\perp}=\left(
E\oplus F\right)  \ominus E_{0},$
\[
\left[
\begin{array}
[c]{cc}%
U(z) & V(z)
\end{array}
\right]  =\left[
\begin{array}
[c]{cc}%
U_{E}(z) & V_{E}(z)\\
U_{F}(z) & V_{F}(z)
\end{array}
\right]  :E_{0}\oplus E_{0}^{\perp}\rightarrow E\oplus F.
\]
Then the following two statements hold. \newline(i) $zH_{E}^{2}\subset
N_{3}\subset L_{E}^{2}\oplus H_{F}^{2}$ if and only if $U_{F} ,zU_{E}^{\ast
}\in H^{\infty}$ and there exists $V $ with $V_{E} =0$ such that $\left[
\begin{array}
[c]{cc}%
U & V
\end{array}
\right]  $ is unitary-valued. \newline(ii) When $E$ and $F$ are finite
dimensional, $zH_{E}^{2}\subset N_{3}\subset L_{E}^{2}\oplus H_{F}^{2}$ if and
only if $U_{F} ,zU_{E} ^{\ast}\in H^{\infty}$ and $\text{rank}U_{F}(z)=\dim
E_{0}-\dim E$.
\end{proposition}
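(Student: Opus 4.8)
The plan is to build on Proposition \ref{ab1}(i): we already know that $zH_{E}^{2}\subset N_{3}\subset L_{E}^{2}\oplus H_{F}^{2}$ is equivalent to $U_{F},zU_{E}^{\ast}\in H^{\infty}$ together with the existence of a complementary $V$ having $V_{E}=0$, i.e. $V=\bigl[\begin{smallmatrix}0\\ V_{F}\end{smallmatrix}\bigr]$ with $\bigl[\begin{smallmatrix}U & V\end{smallmatrix}\bigr]$ unitary-valued. So the whole task in the finite-dimensional case is to show that, given $U_{F},zU_{E}^{\ast}\in H^{\infty}$, the existence of such a $V$ is equivalent to the single rank condition $\operatorname{rank}U_{F}(z)=\dim E_{0}-\dim E$.

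First I would pin down the numerics. Unitarity of $\bigl[\begin{smallmatrix}U & V\end{smallmatrix}\bigr]$ forces $\dim E_{0}^{\perp}=\dim(E\oplus F)-\dim E_{0}=\dim F-(\dim E_{0}-\dim E)$, so if we write $r:=\dim E_{0}-\dim E$ then $\dim E_{0}^{\perp}=\dim F-r$; note $r\ge 0$ automatically because $U_{E}(z)$ maps $E_0$ into $E$ and, since $zU_E^*\in H^\infty$ while $U$ is an isometry, $U_E(z)$ must be surjective onto $E$ a.e. (the image $U_E(z)E_0 \supset zE$ by the argument already used for $zH_E^2\subset N_3$, hence equals $E$ a.e.; then $\operatorname{rank}U_E(z)=\dim E$ a.e.). Consequently from $U(z)^{*}U(z)=I_{E_{0}}$, i.e. $U_{E}^{*}U_{E}+U_{F}^{*}U_{F}=I_{E_{0}}$, one reads off $\operatorname{rank}U_{F}(z)=\dim E_{0}-\operatorname{rank}U_{E}(z)=r$ whenever the intersection behaves generically; but this needs care, so I would instead argue via the column space. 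The key linear-algebra fact: for the isometry $U(z):E_0\to E\oplus F$ with $U_E(z)$ surjective onto $E$, the orthogonal complement $(\operatorname{ran}U(z))^{\perp}$ in $E\oplus F$ is contained in $\{0\}\oplus F$ if and only if $\operatorname{ran}U(z)\supset E\oplus\{0\}$, i.e. iff $e\oplus 0\in\operatorname{ran}U(z)$ for every $e\in E$; and because $\operatorname{ran}U(z)=\operatorname{ran}\bigl[\begin{smallmatrix}U_E\\ U_F\end{smallmatrix}\bigr]$, this happens iff $\ker U_E(z)^{*}$... — more precisely $E\oplus\{0\}\subset\operatorname{ran}U(z)$ iff the map $\ker U_F(z)\to E$, $x\mapsto U_E(z)x$ is onto, which, combined with surjectivity of $U_E$, is an open generic condition that I would translate into $\operatorname{rank}U_F(z)=\dim E_0-\dim E$.

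Here is the cleaner route I actually intend to follow. A complementary $V$ with $V_E=0$ exists iff $(\operatorname{ran}U(z))^{\perp}\subset\{0\}\oplus F$ for a.e.\ $z$, equivalently iff $E\oplus\{0\}\subset\operatorname{ran}U(z)$ for a.e.\ $z$ (then take $V$ to be any measurable, in fact $L^{\infty}$, isometry onto the remaining complement inside $\{0\}\oplus F$; measurability/boundedness of $V$ follows since everything is finite-dimensional and $\operatorname{rank}$ is constant by Lemma \ref{fixed} applied to $U_F\in H^{\infty}$). Now $E\oplus\{0\}\subset\operatorname{ran}U(z)$: write a generic $e\oplus 0=U(z)x=(U_E(z)x)\oplus(U_F(z)x)$, which forces $U_F(z)x=0$, so we need $U_E(z)(\ker U_F(z))=E$. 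Since $\dim\ker U_F(z)=\dim E_0-\operatorname{rank}U_F(z)$, this surjectivity onto $E$ requires $\dim E_0-\operatorname{rank}U_F(z)\ge\dim E$, i.e. $\operatorname{rank}U_F(z)\le r$; and conversely if $\operatorname{rank}U_F(z)=r$ then $\dim\ker U_F(z)=\dim E$, and I must check $U_E$ restricted to $\ker U_F(z)$ is injective — but $U_E x=0=U_F x$ gives $U(z)x=0$, contradicting $U(z)$ isometric, so the restriction is injective, hence (dimension count) onto $E$. So the condition is exactly $\operatorname{rank}U_F(z)\le r$; and the reverse inequality $\operatorname{rank}U_F(z)\ge r$ is automatic: since $U_E(z)$ is surjective onto $E$, $\dim\ker U_E(z)=\dim E_0-\dim E=r$, and on $\ker U_E(z)$ the map $U_F(z)$ is injective (again because $U(z)$ is isometric), giving $\operatorname{rank}U_F(z)\ge r$. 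Therefore $E\oplus\{0\}\subset\operatorname{ran}U(z)$ a.e. iff $\operatorname{rank}U_F(z)=r=\dim E_0-\dim E$, which combined with Proposition \ref{ab1}(i) is the claim.

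The main obstacle, I expect, is the measurability/regularity of the complementary function $V$ — asserting $V\in L^{\infty}$ with $V_E=0$ from the pointwise rank condition. This is where finite-dimensionality and Lemma \ref{fixed} do the real work: constancy of $\operatorname{rank}U_F(z)$ (from $U_F\in H^{\infty}$) lets one choose fixed rows/columns and obtain a measurable, bounded selection of an orthonormal basis for $(\operatorname{ran}U(z))^{\perp}\subset\{0\}\oplus F$; alternatively one realizes $V$ via a bounded measurable square root of the projection $I-U(z)U(z)^{*}$ onto that complement. The pure linear algebra above is routine; packaging it cleanly so that the ``only if'' direction correctly produces $zU_E^*\in H^\infty$ (which I would simply carry over verbatim from Proposition \ref{ab1}(i), since that part is identical) is the other bookkeeping point to be careful about.
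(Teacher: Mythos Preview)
Your proposal (which addresses only part~(ii), taking part~(i) as given) is correct and follows essentially the same plan as the paper: reduce to part~(i), then show that, given $U_F, zU_E^{\ast}\in H^{\infty}$, the existence of a complementary $V$ with $V_E=0$ is equivalent to $\operatorname{rank}U_F(z)=\dim E_0-\dim E$. The only real difference is in how the linear algebra is packaged. In the forward direction the paper writes the terse (and not self-evident) identity $\operatorname{rank}U_F=\operatorname{rank}U-\operatorname{rank}U_E$, while you obtain both inequalities separately from $E\oplus\{0\}\subset\operatorname{ran}U(z)$ and the injectivity of $U_F$ on $\ker U_E$; your argument here is in fact the cleaner of the two. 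In the reverse direction the paper constructs $V_F$ explicitly: it invokes Lemma~\ref{fixed} to pick $\dim E_0-\dim E$ fixed columns of $U_F$ spanning its column space, and completes them to an orthonormal basis of $F$. You instead realize $V$ as a measurable isometry onto $(\operatorname{ran}U(z))^{\perp}\subset\{0\}\oplus F$; this comes to the same thing, and both routes ultimately lean on Lemma~\ref{fixed} (constancy of $\operatorname{rank}U_F$) to guarantee a bounded measurable selection.
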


\begin{proof}
(i) By (\ref{inside}), $N_{3}\subset L_{E}^{2}\oplus H_{F}^{2}$ if and only if
$U_{F}\in H^{\infty}.$ If $zH_{E}^{2}\subset N_{3}$, then $\forall e \in E$,
there exists $g \in H_{E_{0}}^{2}$ such that $Ug = ze$. Note that $\left[
\begin{array}
[c]{cc}%
U & V
\end{array}
\right]  $ is unitary-valued, the equation%
\[
U g =\left[
\begin{array}
[c]{cc}%
U & V
\end{array}
\right]  (g \oplus0)=ze
\]
is the same as
\[
g \oplus0=\left[
\begin{array}
[c]{cc}%
U & V
\end{array}
\right]  ^{*} (ze\oplus0) = zU_{E} ^{\ast}e\oplus zV_{E} ^{\ast}e,
\]
So $zU_{E}^{\ast}\in H^{\infty}$ and $V_{E} = 0$. Conversely, if $U_{F}
,zU_{E}^{\ast}\in H^{\infty}$ and there exists $V $ with $V_{E} =0$ such that
$\left[
\begin{array}
[c]{cc}%
U & V
\end{array}
\right]  $ is unitary-valued, then for $f \in H^{2}_{E}$, let $g = zU_{E}
^{\ast} f$, we have $g \in H^{2}_{E_{0}}$ and by the above discussion $Ug=zf$.
Thus $zH_{E}^{2}\subset N_{3}$.

(ii) Assume $E$ and $F$ are finite dimensional. Note that $zH_{E}^{2}\subset U
H_{E_{0}}^{2}$ implies that $\dim E_{0}\geq\dim E.$ Assume $U_{F}
,zU_{E}^{\ast}\in H^{\infty}$ and there exists $V $ with $V_{E} =0$ such that
$\left[
\begin{array}
[c]{cc}%
U & V
\end{array}
\right]  $ is unitary-valued. Then $\text{rank}U_{E}(z) = \dim E$,
\begin{align*}
\text{rank} U_{F}(z) = \text{rank}U(z) - \text{rank}U_{E}(z) = \dim E_{0} -
\dim E.
\end{align*}

On the other hand, assume $n_{1} =rankU_{F}(z)=\dim E_{0}-\dim E.$ By Lemma
\ref{fixed}, there exists $n_{1}$ columns of $U_{F}(z)$ which is a basis of
the column space of $U_{F}(z).$ Denote these $n_{1}$ columns by $\left\{
c_{1}(z),\cdots,c_{n_{1}}(z)\right\}  .$ Extend $\left\{  c_{1}(z),\cdots
,c_{n_{1}}(z)\right\}  $ to $\left\{  c_{1}(z),\cdots,c_{n_{1}}(z),d_{1}%
(z),\cdots,d_{n_{2}}(z)\right\}  $ so that
\begin{align*}
Span\left\{  c_{1}(z),\cdots,c_{n_{1}}(z),d_{1}(z),\cdots,d_{n_{2}%
}(z)\right\}   &  =F,\\
Span\left\{  c_{1}(z),\cdots,c_{n_{1}}(z)\right\}   &  \perp Span\left\{
d_{1}(z),\cdots,d_{n_{2}}(z)\right\}  ,
\end{align*}
and $\left\{  d_{1}(z),\cdots,d_{n_{2}}(z)\right\}  $ are orthonormal vectors.
Let $V_{F}(z)$ be the matrix whose columns are $\left\{  d_{1}(z),\cdots
,d_{n_{2}}(z)\right\}  .$ Then $\left[
\begin{array}
[c]{cc}%
U & V
\end{array}
\right]  $ is unitary-valued with $V_{E} =0.$ This proves (ii) and (iii) are equivalent.
\end{proof}

Now we look at the type II invariant subspaces of $X$. When $N_{3}=UH_{E_{0}%
}^{2}\oplus M_{K},$ where $E_{0}\subseteq E\oplus F,$ $U\in L_{B(E_{0},E\oplus
F)}^{\infty}$ is isometry-valued and $K$ is a nonzero measurable range
function of $E$, it is more difficult to see when $N_{3}\supset zH_{E}^{2}.$
In the case $E$ and $F$ are finite dimensional, a precise answer of when
$N_{3}\supset zH_{E}^{2}$ is obtained in Proposition \ref{type2condition}
below. First in the case $\dim E<\infty$, we have the following result.

\begin{lemma}
\label{costant}Assume $\dim E=m<\infty.$ Let $N_{3}=UH_{E_{0}}^{2}\oplus
M_{K}$ where $U\in L_{B(E_{0},E\oplus F)}^{\infty}$ is isometry-valued and $K$
is a nonzero measurable range function of $E$, $E_{0} \subset E \oplus F$. If
$zH_{E}^{2}\subset N_{3}\subset L_{E}^{2}\oplus H_{F}^{2}$, then $N_{3}
\supset(L^{2}_{E} \ominus\overline{\Theta H^{2}_{E_{2}}})$ for some $E_{2}
\subset E$ with $\dim E_{2} < m$, and $\Theta\in H^{\infty}_{B(E_{2},E)}$ zero
or left inner.
\end{lemma}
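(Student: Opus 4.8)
The plan is to study the horizontal slice $N_{3}\cap L_{E}^{2}$, classify it by Beurling--Lax--Halmos, and then observe that the nonzero reducing summand $M_{K}$ forces this slice to have strictly positive defect in $L_{E}^{2}$. The defect is exactly what produces the required $\Theta$, and the bound $\dim E_{2}<m$ will come for free from finite dimensionality of $E$.

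First I would note that by Lemma \ref{condition}(i) the hypothesis $N_{3}\subset L_{E}^{2}\oplus H_{F}^{2}$ gives $M_{K}\subset L_{E}^{2}$, so $M_{K}\subset N_{3}\cap L_{E}^{2}$, while $zH_{E}^{2}\subset N_{3}$ gives $zH_{E}^{2}\subset N_{3}\cap L_{E}^{2}$. Now $N_{3}\cap L_{E}^{2}$ is an $M_{z}$-invariant subspace of $L_{E}^{2}$ containing $zH_{E}^{2}$, so $L_{E}^{2}\ominus(N_{3}\cap L_{E}^{2})$ is $M_{z}^{\ast}$-invariant and sits inside $L_{E}^{2}\ominus zH_{E}^{2}$. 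Applying the unitary $J_{1}$ (which conjugates $M_{z}^{\ast}$ to $M_{z}$ and carries $L_{E}^{2}\ominus zH_{E}^{2}$ onto $H_{E}^{2}$), the space $J_{1}\bigl(L_{E}^{2}\ominus(N_{3}\cap L_{E}^{2})\bigr)$ is an $M_{z}$-invariant subspace of $H_{E}^{2}$; by Theorem \ref{BLH}(i) (together with the trivial cases) it equals $\Theta_{0}H_{E_{0}'}^{2}$ for some $E_{0}'\subseteq E$ and $\Theta_{0}\in H_{B(E_{0}',E)}^{\infty}$ which is zero or left inner. Undoing $J_{1}$ (and the ``conjugation'' the paper uses when it writes $\overline{zH_{E}^{2}}=L_{E}^{2}\ominus H_{E}^{2}$) exhibits $N_{3}\cap L_{E}^{2}$ in the stated form $L_{E}^{2}\ominus\overline{\Theta H_{E_{2}}^{2}}$, with $E_{2}=E_{0}'$ and $\Theta$ a zero-or-left-inner function with $\dim E_{2}=\dim E_{0}'$. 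Since $N_{3}\supset N_{3}\cap L_{E}^{2}$, it remains only to show $\dim E_{0}'<m$.

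For this I would argue by contradiction. Suppose $\dim E_{0}'=m=\dim E$; as $E$ is finite dimensional, a left-inner function between equidimensional spaces is unitary-valued, so $\Theta_{0}(z)$ is unitary for a.e.\ $z$. Then multiplication $W$ by $z\mapsto\Theta_{0}(\overline{z})$ is a unitary operator on $L_{E}^{2}$ commuting with $M_{z}$ and $M_{z}^{\ast}$, and a direct computation gives $J_{1}(\Theta_{0}H_{E}^{2})=W(L_{E}^{2}\ominus zH_{E}^{2})$, hence $N_{3}\cap L_{E}^{2}=L_{E}^{2}\ominus J_{1}(\Theta_{0}H_{E}^{2})=W(zH_{E}^{2})$. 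If $M_{G}$ were a nonzero reducing subspace of $M_{z}$ inside $W(zH_{E}^{2})$, then $W^{\ast}M_{G}$ would be a nonzero reducing subspace inside $zH_{E}^{2}$; but a reducing subspace inside $zH_{E}^{2}\subset H_{E}^{2}$ is contained in $z^{n}H_{E}^{2}$ for every $n\ge1$, hence is $\{0\}$. So $N_{3}\cap L_{E}^{2}$ contains no nonzero reducing subspace, contradicting $0\neq M_{K}\subset N_{3}\cap L_{E}^{2}$. Therefore $\dim E_{0}'<m$ (and when $E_{0}'=\{0\}$ one has $\Theta_{0}=0$, i.e.\ $M_{K}=L_{E}^{2}$, $\dim E_{2}=0<m$), completing the proof.

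The main obstacle I anticipate is precisely this last dichotomy ``square left-inner $\Rightarrow$ unitary-valued'', which is what allows the reducing subspace $M_{K}$ to detect that the defect of $N_{3}\cap L_{E}^{2}$ in $L_{E}^{2}$ is positive, and which genuinely needs $m<\infty$ (for $\dim E=\infty$ a constant left-inner $\Theta_{0}$ equal to a proper isometry shows the slice can be full-defect yet still carry a reducing subspace). Everything else is routine: the Beurling--Lax--Halmos reduction of $M_{z}$-invariant subspaces of $L_{E}^{2}$ containing $zH_{E}^{2}$, and the bookkeeping relating complex conjugation, $J_{1}$, and the involution $\Theta\mapsto\overline{\Theta(\overline{z})}$ on left-inner functions, which can be carried out once and then suppressed.
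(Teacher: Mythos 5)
Your proof is correct, but it reaches the key dimension bound by a genuinely different mechanism than the paper, so a comparison is worthwhile. You work with the slice $N_{3}\cap L_{E}^{2}$: it is $M_{z}$-invariant and contains both $zH_{E}^{2}$ and $M_{K}$, and after reflecting its orthocomplement in $L_E^2$ by $J_{1}$ you classify it by Beurling--Lax--Halmos, obtaining the exact identity $N_{3}\cap L_{E}^{2}=L_{E}^{2}\ominus\overline{\Theta H_{E_{2}}^{2}}$; the inequality $\dim E_{2}<m$ then follows from your dichotomy that a square left-inner function on a finite-dimensional space is unitary-valued, so that $\dim E_2=m$ would force $N_{3}\cap L_{E}^{2}=W(zH_{E}^{2})$ for a unitary $W$ commuting with $M_{z}^{\pm 1}$, and a nonzero doubly invariant subspace (namely $M_{K}$) cannot sit inside such a space since it would lie in $\bigcap_{n\geq 1}z^{n}H_{E}^{2}=\{0\}$. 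The paper instead works with the complement of all of $N_{3}$ in $L_{E}^{2}\oplus H_{F}^{2}$: it truncates $K$ to the set where $\dim K(z)$ attains its maximal value $m_{1}\geq 1$, builds an explicit complementary range function $K_{2}$, shows $(L_{E}^{2}\oplus H_{F}^{2})\ominus N_{3}\subset(\overline{H_{E}^{2}}\cap M_{K_{2}})\oplus H_{F}^{2}$, identifies $\overline{H_{E}^{2}}\cap M_{K_{2}}=\overline{\Theta H_{E_{2}}^{2}}$ by BLH, and bounds the dimension fiberwise via $\Theta(z)E_{2}\subset K_{2}(\overline{z})$. What each buys: the paper's pointwise count gives the quantitatively sharper bound $\dim E_{2}\leq m-m_{1}$, while your route never needs the auxiliary range functions $K_{1},K_{2}$ and yields the structurally sharper fact that the containment in the lemma can be taken with $L_{E}^{2}\ominus\overline{\Theta H_{E_{2}}^{2}}$ equal to the full slice $N_{3}\cap L_{E}^{2}$, at the price of only $\dim E_{2}\leq m-1$; both suffice for the lemma as stated and for its use in Corollary \ref{costant1}. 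Two minor remarks: the parenthetical claim $M_{K}=L_{E}^{2}$ in your degenerate case $\Theta=0$ is not needed (only $L_{E}^{2}\subset N_{3}$ matters there, though the claim is in fact true since $M_K$ is the doubly invariant part of $N_3$), and your bookkeeping identifying $J_{1}(\Theta_{0}H_{E_{2}}^{2})$ with the paper's $\overline{\Theta H_{E_{2}}^{2}}$ is harmless because the reflected symbol is again left inner.
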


\begin{proof}
Let $m_{1}$ be maximal value of $\dim K(z).$ Then $1\leq m_{1}\leq m$, and
there is $\tau\subset\mathbb{T}$ with $|\tau|>0$ ($|\tau|$ is the measure of
$\tau$) such that $\dim K(z)=m_{1},z\in\tau$. Let $K_{1}$ be the range
function defined by $K_{1}(z)=K(z)$ if $z\in\tau$ and $K_{1}(z)=\{0\}$ if
$z\in\mathbb{T}\backslash\tau.$ Then $M_{K_{1}}\subset M_{K}.$ By the
discussion before Theorem \ref{main}, $N_{3}\supset zH_{E}^{2}$ if and only if
$J_{2}\left[  \left(  L_{E}^{2}\oplus H_{F}^{2}\right)  \ominus N_{3}\right]
\subset H_{E}^{2}\oplus H_{F}^{2},$ if and only if $(L_{E}^{2}\oplus H_{F}%
^{2})\ominus N_{3}\subset\overline{H_{E}^{2}}\oplus H_{F}^{2}.$ Note that%
\begin{align*}
\left(  L_{E}^{2}\oplus H_{F}^{2}\right)  \ominus N_{3}  &  \subset\left(
L_{E}^{2}\oplus H_{F}^{2}\right)  \ominus M_{K}\\
&  \subset\left(  L_{E}^{2}\oplus H_{F}^{2}\right)  \ominus M_{K_{1}}=\left(
L_{E}^{2}\ominus M_{K_{1}}\right)  \oplus H_{F}^{2}.
\end{align*}
Let $\left\{  v_{1}(z),\cdots,v_{m_{1}}(z)\right\}  $ be an orthogonal\ set of
unit vectors such that $\left\{  v_{1}(z)\chi_{\tau}(z),\cdots,v_{m_{1}%
}(z)\chi_{\tau}(z)\right\}  $ is an orthonormal basis of $K_{1}(z)$ for
$z\in\tau.$ Extend $\left\{  v_{1}(z),\cdots,v_{m_{1}}(z)\right\}  $ to
$\left\{  v_{1}(z),\cdots,v_{m}(z)\right\}  $ to be an orthonormal basis of
$E.$ Let $K_{2}$ be the range function defined by%
\[
K_{2}(z)=Span\left\{  v_{1}(z)(1-\chi_{\tau}(z)),\cdots,v_{m_{1}}%
(z)(1-\chi_{\tau}(z)),v_{m_{1}+1}(z),\cdots,v_{m}(z)\right\}  .
\]
So $L_{E}^{2}\ominus M_{K_{1}}=M_{K_{2}}$, and
\begin{align}
\label{containmentrel}\left(  L_{E}^{2}\oplus H_{F}^{2}\right)  \ominus
N_{3}\subset\left(  \overline{ H_{E}^{2}}\oplus H_{F}^{2}\right)  \cap\left[
M_{K_{2}}\oplus H_{F}^{2}\right] .
\end{align}
Note that $\overline{H_{E}^{2}}\cap M_{K_{2}}=\overline{H_{E}^{2}\cap
M_{K_{2}(\overline{z})}}$. If $H_{E}^{2}\cap M_{K_{2}(\overline{z})}=\{0\}$,
then we are done. Now suppose $H_{E}^{2}\cap M_{K_{2}(\overline{z})}\neq
\{0\}$. Since $H_{E}^{2}\cap M_{K_{2}(\overline{z})}$ is an invariant subspace
of $T_{z}$ on $H_{E}^{2}$, by BLH Theorem we have $H_{E}^{2}\cap
M_{K_{2}(\overline{z})}=\Theta H_{E_{2}}^{2}$ for some $E_{2}\subset E$ and
$\Theta$ left inner. Then $\Theta(z)E_{2}\subset K_{2}(\overline{z})$ for
$z\in\mathbb{T}$. So
\[
\dim E_{2}=\dim\Theta(z)E_{2}\leq\dim K_{2}(\overline{z})\leq m-m_{1},z\in
\tau.
\]
Thus (\ref{containmentrel}) implies $N_{3}\supset(L_{E}^{2}\ominus
\overline{\Theta H_{E_{2}}^{2}})$ with $\dim E_{2}<m$.
\end{proof}

If $\dim E = 1$, then the $\Theta$ in the above lemma is zero, and we obtain
the following corollary.

\begin{corollary}
\label{costant1}Assume $\dim E=1.$ Let $N_{3}=U H_{E_{0}}^{2}\oplus M_{K}$
where $U\in L_{B(E_{0},E\oplus F)}^{\infty}$ is zero or isometry-valued and
$K$ is a nonzero measurable range function of $E$, $E_{0} \subset E \oplus F$.
If $zH_{E}^{2}\subset N_{3}\subset L_{E}^{2}\oplus H_{F}^{2}$, then
$N_{3}\supset L_{E}^{2}.$ Consequently, the corresponding $N$ in Theorem
\ref{main} is equal to $\{0\} \oplus H^{2}_{F}$ or $\{0\} \oplus(H^{2}_{F}
\ominus\Lambda H^{2}_{F_{1}})$, where $F_{1} \subset F, \Lambda\in H^{\infty
}_{B(F_{1},F)}$ is left inner.
\end{corollary}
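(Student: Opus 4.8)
The plan is to obtain Corollary~\ref{costant1} almost immediately from Lemma~\ref{costant}, after disposing of the degenerate case $U=0$ by hand. When $U=0$ the hypothesis reads $zH_E^2\subset M_K\subset L_E^2\oplus H_F^2$; by Lemma~\ref{condition}(i) this forces $M_K\subset L_E^2$, and since $M_K$ is doubly invariant, hence reducing for $M_z$ on $L_E^2$, the inclusion $M_K\supset zH_E^2$ propagates under multiplication by $z$ and $\overline z$ to give $M_K=L_E^2$, so $N_3=L_E^2$. (This is exactly the remark made just before Definition~\ref{typeiandii}.)

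When $U$ is isometry-valued I would apply Lemma~\ref{costant} with $m=\dim E=1$: it yields an $E_2\subset E$ with $\dim E_2<1$ together with a $\Theta\in H^\infty_{B(E_2,E)}$ that is zero or left inner and satisfies $N_3\supset L_E^2\ominus\overline{\Theta H_{E_2}^2}$. But $\dim E_2<1$ forces $E_2=\{0\}$, so $\Theta H_{E_2}^2=\{0\}$ and $L_E^2\ominus\overline{\Theta H_{E_2}^2}=L_E^2$; hence $N_3\supset L_E^2$. This settles the first assertion.

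For the ``consequently'' part, using $L_E^2\oplus\{0\}\subset N_3\subset L_E^2\oplus H_F^2$ I would split $N_3=\big(L_E^2\oplus\{0\}\big)\oplus\big(\{0\}\oplus N'\big)$, where $N'=\{h\in H_F^2:0\oplus h\in N_3\}$; this is legitimate because any $(f,g)\in N_3$ differs from $(f,0)\in N_3$ by $(0,g)$. Since $N_3$ is $M_z\oplus M_z$-invariant and $L_E^2\oplus\{0\}$ reduces $M_z\oplus M_z$, the space $N'$ is $T_z$-invariant in $H_F^2$, so Theorem~\ref{BLH}(i) gives $N'=\Lambda H_{F_1}^2$ for some $F_1\subset F$ and left inner $\Lambda\in H^\infty_{B(F_1,F)}$, or $N'=\{0\}$. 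Then by Theorem~\ref{main},
\[
N=\big(L_E^2\oplus H_F^2\big)\ominus J_2N_3=J_2\Big[\big(L_E^2\oplus H_F^2\big)\ominus N_3\Big]=J_2\big(\{0\}\oplus(H_F^2\ominus\Lambda H_{F_1}^2)\big),
\]
and since $J_2$ acts as the identity on $\{0\}\oplus H_F^2$, this equals $\{0\}\oplus(H_F^2\ominus\Lambda H_{F_1}^2)$, which is $\{0\}\oplus H_F^2$ when $N'=\{0\}$.

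I do not expect a real obstacle: Lemma~\ref{costant} already carries all the analytic content, and what remains is the orthogonal-complement bookkeeping inside Theorem~\ref{main}. The only points needing a little care are checking that the complement of $L_E^2\oplus\{0\}$ inside $N_3$ really has the form $\{0\}\oplus N'$ with $N'$ genuinely $T_z$-invariant, and that $J_2$ is trivial on the surviving $F$-component.
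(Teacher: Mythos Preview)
Your proposal is correct and follows essentially the same approach as the paper: the paper's entire proof is the one-line remark ``If $\dim E=1$, then the $\Theta$ in the above lemma is zero,'' invoking Lemma~\ref{costant} with $m=1$, which is precisely your main step. Your handling of the degenerate case $U=0$ and your explicit derivation of the ``consequently'' clause (via the decomposition $N_3=L_E^2\oplus N'$, BLH on $N'$, and the observation that $J_2$ is the identity on the $F$-component) simply spell out details the paper leaves implicit.
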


If we also assume $\dim E_{0} < \infty$ in Lemma \ref{costant}, then we have
the following characterization of $M_{K}$.

\begin{lemma}
\label{typeiifinite} Assume $\dim E=m<\infty.$ Let $N_{3}=UH_{E_{0}}^{2}\oplus
M_{K}$ where $U\in L_{B(E_{0},E\oplus F)}^{\infty}$ is isometry-valued and $K$
is a nonzero measurable range function of $E$, $E_{0} \subset E \oplus F$.
Assume also $\dim E_{0} = n < \infty$. If $zH_{E}^{2}\subset N_{3}\subset
L_{E}^{2}\oplus H_{F}^{2}$, then there are $E_{1}\subset E\oplus F, E_{1}
\perp E_{0}$, $\dim E_{1} + \dim E_{0} \geq\dim E$, $\Omega\in L_{B(E_{1}%
,E)}^{\infty}$ isometry-valued such that $M_{K} = \Omega L^{2}_{E_{1}}$.
\end{lemma}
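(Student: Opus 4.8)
The goal is to show that the reducing part $M_K$ of a type II invariant subspace $N_3 = UH_{E_0}^2\oplus M_K$ (with $\dim E=m<\infty$, $\dim E_0=n<\infty$, and $zH_E^2\subset N_3\subset L_E^2\oplus H_F^2$) is itself of the form $\Omega L_{E_1}^2$ for an isometry-valued $\Omega\in L_{B(E_1,E)}^\infty$ with $E_1\perp E_0$ and $\dim E_1+\dim E_0\geq m$. Since $M_K$ is doubly (equivalently, reducing) invariant for $M_z\oplus M_z$ on $L_E^2\oplus L_F^2$, Wiener's theorem already gives $M_K=M_K$ for a measurable range function $K$; the content is (a) that $K(z)$ has constant dimension and can be ``trivialized'' by a single isometry-valued symbol, and (b) the orthogonality and dimension constraints with $E_0$. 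I would first use Lemma \ref{condition}(i) to record that $M_K\subset L_E^2$, that $K(z)\perp U_E(z)E_0$ a.e., and that $U_F\in H_{B(E_0,F)}^\infty$; and Lemma \ref{condition}(iii) to get $K(z)\oplus U_E(z)E_0=E$ a.e., which forces $\dim K(z)=m-\dim U_E(z)E_0$ a.e.

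First I would pin down that $\dim U_E(z)E_0$ is a.e. constant. Since $zH_E^2\subset N_3$ forces $zU_E^\ast\in H^\infty$ (noted after Proposition \ref{full}), Lemma \ref{fixed} applies to the finite matrix-valued function $U_E^\ast$ (equivalently $zU_E^\ast\in H^\infty$), so $\operatorname{rank}U_E(z)=\dim U_E(z)E_0$ is a.e. equal to some constant $r$; hence $\dim K(z)=m-r$ a.e. Set $E_1$ to be a fixed Hilbert space of dimension $m-r$ (so automatically $\dim E_1+\dim E_0=(m-r)+n\geq m$, since $r\leq n$). Next I would produce the symbol $\Omega$. The natural route: since $K(z)$ is a measurable range function of constant dimension $m-r$ inside the finite-dimensional $E$, the projection $P_K(z)$ onto $K(z)$ is a measurable, essentially bounded projection-valued function; applying a measurable Gram--Schmidt process to a fixed basis of $E$, pushed through $P_K(z)$ and discarding the a.e. zero outputs via Lemma \ref{fixed} (choosing a fixed set of $m-r$ coordinate directions whose images under $P_K(z)$ stay independent a.e.), yields an $L^\infty$ isometry-valued $\Omega\in L_{B(E_1,E)}^\infty$ with $\Omega(z)E_1=K(z)$ a.e. Then $M_K=\Omega L_{E_1}^2$ by definition of $M_K$ and of multiplication by $\Omega$. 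Finally, $E_1\perp E_0$: here I would interpret the claim as saying we may take $E_1$ inside $E\oplus F$ orthogonal to $E_0$ — indeed $K(z)\perp U_E(z)E_0$ together with $U_F\in H^\infty$; composing with a constant unitary (as permitted by Theorem \ref{Helson}) lets us arrange $E_0$ and $E_1$ to sit orthogonally in $E\oplus F$ once the dimensions are fixed, since $\Omega$ maps into $E\subset E\oplus F$ and $K(z)\perp U_E(z)E_0$ gives the pointwise orthogonality that can be upgraded to a choice of orthogonal initial spaces via a constant unitary rearrangement.

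The main obstacle I expect is the measurable selection needed to build $\Omega$: turning the measurable range function $K$ of a.e. constant finite dimension into an honest isometry-valued $L^\infty$ symbol. The constant-rank fact from Lemma \ref{fixed} is the key enabler — it lets me fix, once and for all, an $(m-r)$-element subset $S$ of the coordinate directions of $E$ such that $\{P_K(z)e_j : j\in S\}$ are linearly independent for a.e.\ $z$ — and then a measurable (pointwise) Gram--Schmidt orthonormalization of this fixed finite family produces $\Omega(z)$; boundedness of $\Omega^{-1}$ on its range (equivalently, a uniform lower bound on the relevant Gram determinants) again follows because $\det$ of the Gram matrix is bounded away from $0$ a.e., using that it is the boundary value of an $H^\infty$ function after the appropriate normalization. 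A secondary subtlety is the bookkeeping that makes $E_1\perp E_0$ literally true rather than just true ``up to constant unitary,'' which is handled exactly as in the discussion following Theorem \ref{Helson} and Proposition \ref{ab1}(ii).
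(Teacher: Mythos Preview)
Your overall strategy is correct and reaches the conclusion, but the crucial step differs from the paper's. Both arguments hinge on proving that $\dim K(z)$ is a.e.\ constant. The paper does this via $U_F$: it takes $m_1=\operatorname{ess\,sup}\dim K(z)$, uses Lemma~\ref{condition}(iii) to get $\operatorname{rank}U_E(z)=m-m_1$ on a set $\tau$ of positive measure, passes to $\operatorname{rank}U_F(z)$ on $\tau$, and then invokes Lemma~\ref{fixed} for $U_F\in H^\infty$ to propagate the value to all of $\mathbb{T}$; finally it reads back $\operatorname{rank}U_E$ and $\dim K$. You instead observe that $zU_E^\ast\in H^\infty$ and apply Lemma~\ref{fixed} directly to this finite matrix, obtaining $\operatorname{rank}U_E(z)=r$ constant and hence $\dim K(z)=m-r$ constant in one stroke. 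Your route is shorter and has the advantage that $U_E$ is a genuine finite matrix regardless of $\dim F$, so Lemma~\ref{fixed} applies without qualification.

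Two points need tightening. First, the remark after Proposition~\ref{full} that you cite for $zU_E^\ast\in H^\infty$ is stated only for the type~I case $N_3=UH_{E_0}^2$. In the present type~II setting you need one extra line: for $f\in H_E^2$ write $zf=Ug+h$ with $g\in H_{E_0}^2$ and $h\in M_K$; by Lemma~\ref{condition}(i) one has $K(z)\perp U_E(z)E_0$, so $U_E(z)^\ast h(z)=0$ a.e., whence $zU_E^\ast f=U^\ast(zf\oplus 0)=g\in H_{E_0}^2$. This yields $zU_E^\ast\in H^\infty$ as you want.

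Second, your construction of $\Omega$ is over-engineered and the justifications you give do not apply. Lemma~\ref{fixed} concerns functions with $A$ or $A^\ast$ in $H^\infty$; the projection $P_K$ is only measurable, so you cannot invoke Lemma~\ref{fixed} to select a fixed subset of coordinate directions, nor are the Gram determinants boundary values of $H^\infty$ functions. None of this is needed: once $\dim K(z)=m-r$ a.e., simply take any measurable orthonormal basis $\{v_1(z),\ldots,v_{m-r}(z)\}$ of $K(z)$ (standard for measurable range functions of constant finite dimension in a finite-dimensional $E$) and let $\Omega(z)$ have these as columns. Then $\Omega$ is isometry-valued by construction and $M_K=\Omega L_{E_1}^2$ is immediate; this is exactly what the paper does. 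The placement of $E_1\perp E_0$ inside $E\oplus F$ via a constant unitary, as in Theorem~\ref{Helson}, matches the paper.
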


\begin{proof}
Let $m_{1}$ be the maximal value of $\dim K(z).$ Then $1\leq m_{1}\leq m$, and
there is $\tau\subset\mathbb{T}$ with $|\tau|>0$ such that $\dim
K(z)=m_{1},z\in\tau$. Recall that $U_{E}(z) =P_{E}U(z) ,U_{F}(z) =P_{F}U(z)$.
By Lemma \ref{condition} (iii), $\text{rank}U_{E}(z)=m-m_{1},z\in\tau$. So
$\text{rank}U_{F}(z)=n-(m-m_{1})=n+m_{1}-m,z\in\tau$. Then $n+m_{1}\geq m$.
Since $U_{F}\in H^{\infty}$, by Lemma \ref{fixed}, we have $\text{rank}%
U_{F}(z)=n+m_{1}-m,z\in\mathbb{T}$. Then $\text{rank}U_{E}(z)=m-m_{1}%
,z\in\mathbb{T}$ and $\dim K(z)=m_{1},z\in\mathbb{T}$. Let $\left\{
v_{1}(z),\cdots,v_{m_{1}}(z)\right\}  $ be an orthonormal basis of $K(z)$ and
$\Omega(z)$ be the matrix-valued function whose columns are $\left\{
v_{1}(z),\cdots,v_{m_{1}}(z)\right\}  .$ Then $\Omega$ is isometry-valued with
$\Omega(z)^{\ast}\Omega(z)=I_{E_{1}}$ a.e. $z\in\mathbb{T}$ and $M_{K}=\Omega
L_{E_{1}}^{2}$ with $\dim E_{1}=m_{1}$. By applying a constant unitary $W$ as
in Theorem \ref{Helson}, we can assume $E_{1}\subset E\oplus F,E_{1}\perp
E_{0}$. The proof is complete.
\end{proof}

When $E$ and $F$ are finite dimensional, the following result gives a complete
characterization of type II invariant subspace $N$ of $S_{E}\oplus S_{F}%
^{\ast}$ in terms of $N_{3}.$

\begin{proposition}
\label{type2condition} Assume $E$ and $F$ are finite dimensional. Then the
following hold. \newline(i) Let $N$ be a type II invariant subspace of
$S_{E}\oplus S_{F}^{\ast}$. Then $N=J_{2}\left[  \left(  L_{E}^{2}\oplus
H_{F}^{2}\right)  \ominus N_{3}\right]  $ where $N_{3}=UH_{E_{0}}^{2}%
\oplus\Omega L_{E_{1}}^{2},$ $E_{0},E_{1}\subset E\oplus F$, $E_{1}\perp
E_{0},$ $\dim E_{1}+\dim E_{0}\geq\dim E$, $U\in L_{B(E_{0},E\oplus
F)}^{\infty}$ is zero or isometry-valued, and $\Omega\in L_{B(E_{1}%
,E)}^{\infty}$ is isometry-valued. \newline(ii) Let $N_{3}=U H_{E_{0}}%
^{2}\oplus\Omega L_{E_{1}}^{2}$ where $E_{0}, E_{1} \subset E \oplus F$,
$E_{1} \perp E_{0}$ and $U\in L_{B(E_{0},E\oplus F)}^{\infty}$, $\Omega\in
L_{B(E_{1},E)}^{\infty}$ are isometry-valued. Write
\[
U(z)=\left[
\begin{array}
[c]{c}%
U_{E}(z)\\
U_{F}(z)
\end{array}
\right]  :E_{0}\rightarrow E\oplus F.
\]
Then $zH_{E}^{2}\subset N_{3}\subset L_{E}^{2}\oplus H_{F}^{2}$ if and only if
$U_{F} ,zU_{E} ^{\ast}\in H^{\infty}$ and $\text{rank} U_{F}(z)=\dim
E_{0}+\dim E_{1}-\dim E.$
\end{proposition}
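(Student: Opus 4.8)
The plan is to obtain part (i) directly from the structure theorems of Section 3, and to prove the equivalence in part (ii) by a pointwise dimension count, using that over finite-dimensional $E,F$ every isometry-valued symbol completes to a unitary-valued one. For part (i): by Theorem \ref{main}, $N$ corresponds to an invariant subspace $N_3$ of $M_z\oplus M_z$ with $zH_E^2\subset N_3\subset L_E^2\oplus H_F^2$, and by Theorem \ref{Helson} we may write $N_3=UH_{E_0}^2\oplus M_K$ with $U$ zero or isometry-valued and $K(z)\perp U(z)E_0$ a.e.; being type II, $K\neq 0$. If $U=0$ then $N_3=M_K$ is reducing and contains $zH_E^2$, hence contains $L_E^2$, and a reducing subspace of $M_z\oplus M_z$ contained in $L_E^2\oplus H_F^2$ that contains $L_E^2$ must equal $L_E^2$ (the only reducing subspace of $M_z$ on $L_F^2$ sitting inside $H_F^2$ is $\{0\}$); then take $E_1=E$, $\Omega=I_E$. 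If $U$ is isometry-valued, then $\dim E_0\le\dim(E\oplus F)<\infty$, so Lemma \ref{typeiifinite} applies and yields $E_1\subset E\oplus F$ with $E_1\perp E_0$, $\dim E_1+\dim E_0\ge\dim E$, and an isometry-valued $\Omega\in L_{B(E_1,E)}^\infty$ with $M_K=\Omega L_{E_1}^2$. This is exactly the asserted form of $N_3$.

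For part (ii), I first reduce the two containments to conditions on the symbol. The direct-sum notation carries the implicit orthogonality $U(z)E_0\perp\Omega(z)E_1$ a.e.; since $K(z):=\Omega(z)E_1\subset E$, this reads $K(z)\perp U_E(z)E_0$. By Lemma \ref{condition}(i), $N_3\subset L_E^2\oplus H_F^2$ is then equivalent to $U_F\in H^\infty$ (the conditions $M_K\subset L_E^2$ and $K(z)\perp U_E(z)E_0$ being automatic here, the first because $\Omega$ is $B(E_1,E)$-valued), and by Lemma \ref{condition}(ii) the remaining requirement $zH_E^2\subset N_3$ is equivalent to $zE\subset N_3$. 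So it suffices to show, under the standing assumption $U_F\in H^\infty$: $zE\subset N_3$ if and only if $zU_E^\ast\in H^\infty$ and $\text{rank}\,U_F(z)=\dim E_0+\dim E_1-\dim E$. Throughout write $\widetilde\Omega$ for $\Omega$ followed by the inclusion $E\hookrightarrow E\oplus F$, so that $U^\ast\widetilde\Omega=0$ and $\widetilde\Omega L_{E_1}^2=\Omega L_{E_1}^2$.

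For the forward implication, given $zE\subset N_3$ and $e\in E$, choose $g\in H_{E_0}^2$ and $h\in L_{E_1}^2$ with $ze\oplus 0=Ug+\widetilde\Omega h$; applying $U^\ast$ (and using $U^\ast U=I$, $U^\ast\widetilde\Omega=0$) gives $g=zU_E^\ast e$, so $zU_E^\ast\in H^\infty$, while comparing $E$- and $F$-components gives $U_F(z)g(z)=0$ and $ze-U_E(z)g(z)\in K(z)$ a.e. Letting $e$ range over a basis of $E$ yields $E=U_E(z)(\ker U_F(z))+K(z)$ a.e.; the sum is direct since $U_E(z)E_0\perp K(z)$, and $U_E(z)$ is injective on $\ker U_F(z)$ because $U(z)$ is, so $\dim\ker U_F(z)=\dim E-\dim E_1$, which is the rank equality. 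For the converse, assume $zU_E^\ast\in H^\infty$ and that rank equality. Since $\ker U_E(z)\cap\ker U_F(z)=\{0\}$ one always has $\text{rank}\,U_E(z)+\text{rank}\,U_F(z)\ge\dim E_0$, and $U_E(z)E_0\perp K(z)$ gives $\text{rank}\,U_E(z)\le\dim E-\dim E_1$; hence $\text{rank}\,U_E(z)=\dim E-\dim E_1$ and $U_E(z)E_0=K(z)^\perp$ in $E$. The isometry-valued function $\Xi_0:E_0\oplus E_1\to E\oplus F$ obtained by juxtaposing the columns of $U$ and of $\widetilde\Omega$ completes (in finite dimensions) to a unitary-valued $\Xi$ on $E\oplus F$ with remaining block $V'\in L_{B(E_2',E\oplus F)}^\infty$, $E_2'=(E\oplus F)\ominus(E_0\oplus E_1)$; write $V_E'=P_EV'$, $V_F'=P_FV'$. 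Because the $F$-rows of $\Xi$ are orthonormal and the $F$-rows of $\widetilde\Omega$ vanish, $\text{rank}\,V_F'(z)=\dim F-\text{rank}\,U_F(z)=\dim E_2'$, so $V'(z)$ is injective and $V'(z)E_2'$ is the orthogonal complement in $E\oplus F$ of $U(z)E_0\oplus(K(z)\oplus\{0\})$. Then $V_E'=0$ is equivalent to $E\oplus\{0\}\subset U(z)E_0\oplus(K(z)\oplus\{0\})$, i.e. to $E=(U(z)E_0\cap E)+K(z)$ (viewing $E$ as $E\oplus\{0\}$); but $U(z)E_0\cap E=U_E(z)(\ker U_F(z))$, which has dimension $\dim\ker U_F(z)=\dim E-\dim E_1$ and lies inside $U_E(z)E_0=K(z)^\perp$, hence equals $K(z)^\perp$, and the identity holds. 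Thus $V_E'=0$, and then $\Xi(z)^\ast(ze\oplus 0)=(zU_E(z)^\ast e,\,z\Omega(z)^\ast e,\,0)$ gives $ze\oplus 0=U(zU_E^\ast e)+\widetilde\Omega(z\Omega^\ast e)\in UH_{E_0}^2\oplus\Omega L_{E_1}^2=N_3$ (using $zU_E^\ast e\in H_{E_0}^2$), so $zE\subset N_3$.

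The main obstacle is the converse direction of part (ii), and within it the step $V_E'=0$: one must recover the full block structure of the unitary completion $\Xi$ from the single numerical datum $\text{rank}\,U_F(z)=\dim E_0+\dim E_1-\dim E$, which is precisely where finite-dimensionality and the orthogonality relations imposed by unitarity have to be played off against one another. By contrast, the forward implication and part (i) are comparatively routine.
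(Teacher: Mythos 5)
Your proof is correct, and it works inside the same framework as the paper's, so only the internal organization differs. Part (i) is handled as in the paper (it is exactly Lemma \ref{typeiifinite}; your separate treatment of the degenerate case $U=0$, giving $N_{3}=L_{E}^{2}$, matches the paper's remark preceding Definition \ref{typeiandii}), and your reading of the direct sum as carrying the pointwise orthogonality $U(z)E_{0}\perp\Omega(z)E_{1}$ agrees with how the paper invokes Lemma \ref{condition} to get that $\left[\begin{array}{cc}U & \Omega\end{array}\right]$ is isometry-valued. In part (ii) the paper runs both directions through a unitary completion $\left[\begin{array}{ccc}U & \Omega & V\end{array}\right]$: in the forward direction it reads off $zU_{E}^{\ast}\in H^{\infty}$ and $V_{E}=0$ from the completed equation and then obtains the rank identity from the existence of a completion with vanishing $E$-block, while in the converse it \emph{constructs} such a completion by the column-extension argument of Proposition \ref{ab1} (via Lemma \ref{fixed}). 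You proceed a little differently: forward, you apply $U(z)^{\ast}$ directly and compare $E$- and $F$-components to get the pointwise decomposition $E=U_{E}(z)\ker U_{F}(z)\oplus K(z)$, from which the rank equality is an immediate dimension count (no completion needed); converse, you take an arbitrary unitary completion and show the rank hypothesis forces its $E$-block to vanish, then conclude exactly as the paper does. Both variants are sound; the paper's converse produces the completion explicitly, yours shows the vanishing is automatic for any completion. A minor remark: your one-line justification that $\text{rank}\,V_{F}'(z)=\dim E_{2}'$ needs the assumed rank of $U_{F}(z)$ in addition to the orthonormality of the $F$-rows, but that computation is in any case not needed for the key step $V_{E}'=0$, which you establish independently from the subspace identity $E\oplus\{0\}\subset U(z)E_{0}+\left(K(z)\oplus\{0\}\right)$.
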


\begin{proof}
Part (i) follows from Lemma \ref{typeiifinite}. Now we prove part (ii). Write%
\[
\Omega(z)=\left[
\begin{array}
[c]{c}%
\Omega_{E}(z)\\
0
\end{array}
\right]  ,V(z)=\left[
\begin{array}
[c]{c}%
V_{E}(z)\\
V_{F}(z)
\end{array}
\right]
\]
By Lemma \ref{condition}, $\left[
\begin{array}
[c]{cc}%
U & \Omega
\end{array}
\right]  $ is isometry-valued. Thus there exists $V\in L_{B((E_{0}\oplus
E_{1})^{\perp},E\oplus F)}^{\infty}$ such that $\left[
\begin{array}
[c]{ccc}%
U & \Omega & V
\end{array}
\right]  $ is unitary-valued. Assume $zH_{E}^{2}\subset N_{3}\subset L_{E}%
^{2}\oplus H_{F}^{2}$. For $e\in E,$ $g\in H_{E_{0}}^{2},f\in L_{E_{1}}%
^{2},0\in L_{(E_{0}\oplus E_{1})^{\perp}}^{2},$ the equation%
\[
\left[
\begin{array}
[c]{ccc}%
U & \Omega & V
\end{array}
\right]  (g\oplus f\oplus0)=ze = ze\oplus0
\]
is the same as
\[
g\oplus f\oplus0=zU_{E}^{\ast}e\oplus z\Omega_{E}^{\ast}e\oplus zV_{E}^{\ast
}e.
\]
Thus $zU_{E}^{\ast}\in H^{\infty}$ and $zV_{E}^{\ast}=0.$ Note that there
exists $V$ with $V_{E}=0$ such that $\left[
\begin{array}
[c]{ccc}%
U & \Omega & V
\end{array}
\right]  $ is unitary-valued if and only if
\[
\text{rank}V_{F}(z)=\dim E+\dim F-\dim E_{0}-\dim E_{1},
\]
if and only if $\text{rank}U_{F}(z)=\dim F-\text{rank}V_{F}(z)=\dim E_{0}+\dim
E_{1}-\dim E.$ For the converse part, we apply the same argument as in
Proposition \ref{ab1} to obtain the conclusion.
\end{proof}

\section{Invariant subspaces of $S_{E}\oplus S_{F}^{\ast}$ as kernels of
$W_{\Psi}$ or ranges of $V_{\Phi}.$}

The following lemma presents a condition for $W_{\Psi}$ and $V_{\Phi}$ to be
partial isometries.

\begin{lemma}
\label{partial1}Assume $U \in L_{B(E_{0},E\oplus F)}^{\infty}$ is
isometry-valued and
\[
U(z)=\left[
\begin{array}
[c]{c}%
U_{E}(z)\\
U_{F}(z)
\end{array}
\right]  =\left[
\begin{array}
[c]{c}%
A(z)\\
C(z)
\end{array}
\right]  :E_{0}\rightarrow E\oplus F
\]
where $C ,zA ^{\ast}\in H^{\infty}.$ The following three statements
hold.\newline(i) If $A(z)A(z)^{\ast}=I_{E}$ a.e. $z \in\mathbb{T}$, then
$W_{\Psi}$ is a partial isometry, where
\[
W_{\Psi} =\left[
\begin{array}
[c]{cc}%
H_{\overline{z}A}^{\ast} & T_{C}^{\ast}%
\end{array}
\right]  ,\Psi(z)=\left[
\begin{array}
[c]{c}%
\overline{z}A(z)\\
C(z)
\end{array}
\right]  .
\]
(ii) If $C(z)C(z)^{\ast}=I_{F}$ a.e. $z \in\mathbb{T}$, then $V_{\Phi}$ is a
partial isometry, where%
\[
V_{\Phi}=\left[
\begin{array}
[c]{c}%
T_{zA(\overline{z})}\\
H_{C(\overline{z})}%
\end{array}
\right]  ,\Phi(z)=\left[
\begin{array}
[c]{c}%
zA(\overline{z})\\
C(\overline{z})
\end{array}
\right]  .
\]
(iii) If $U$ is unitary-valued, then $W_{\Psi}^{*} W_{\Psi}+ V_{\Phi}V_{\Phi
}^{*} = I_{H^{2}_{E\oplus F}}$ and $\ker(W_{\Psi}) = R(V_{\Phi})$.
\end{lemma}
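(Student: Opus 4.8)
The plan is to reduce Lemma~\ref{partial1} to a few product formulas for block Toeplitz and Hankel operators, together with the elementary fact recalled just before Question~\ref{pisop} that an operator $G$ is a partial isometry iff $GG^{*}$ (equivalently $G^{*}G$) is a projection. I would first record, for operator-valued symbols of compatible sizes, the identities
\[
T_{\Phi}T_{\Psi}=T_{\Phi\Psi}-H_{\Phi(\overline{z})}H_{\Psi},\qquad
H_{\Phi}^{*}H_{\Psi}=T_{\Phi^{*}\Psi}-T_{\Phi^{*}}T_{\Psi},\qquad
H_{\Phi}H_{\Psi}=T_{\Phi(\overline{z})\Psi}-T_{\Phi(\overline{z})}T_{\Psi},\qquad
H_{\Phi}T_{\Psi^{*}}=H_{\Phi\Psi^{*}}-T_{\Phi(\overline{z})}H_{\Psi^{*}},
\]
each of which follows by decomposing $M_{\Phi}M_{\Psi}=M_{\Phi\Psi}$ on $L^{2}$ along $H^{2}\oplus\overline{zH^{2}}$ and using $J^{2}=I$, $JQ=PJ$, $JP=QJ$, $JM_{\Phi}J=M_{\Phi(\overline{z})}$; I would also use $H_{\Phi}=0$ for $\Phi\in H^{\infty}$, $T_{\Phi}^{*}=T_{\Phi^{*}}$, and $H_{\Phi}^{*}=H_{\Phi(\overline{z})^{*}}$. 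Writing $A=U_{E}$, $C=U_{F}$: since $U$ is isometry-valued, $A^{*}A+C^{*}C=I_{E_{0}}$; since $zA^{*},C\in H^{\infty}$, the functions $\overline{z}A=(zA^{*})^{*}$, $(C(\overline{z}))^{*}$ and $zA(\overline{z})=((zA^{*})(\overline{z}))^{*}$ are analytic, so that $T_{\overline{z}A}=T_{zA^{*}}^{*}$, $T_{C(\overline{z})}=T_{(C(\overline{z}))^{*}}^{*}$, $H_{\overline{z}A}^{*}=H_{(zA^{*})(\overline{z})}$, and $H_{C(\overline{z})}^{*}=H_{C^{*}}$.

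For part~(i) I would compute $W_{\Psi}W_{\Psi}^{*}=H_{\overline{z}A}^{*}H_{\overline{z}A}+T_{C}^{*}T_{C}$. The second identity above with $\Phi=\Psi=\overline{z}A$ (so $\Phi^{*}=zA^{*}$ and $\Phi^{*}\Phi=A^{*}A$) gives $H_{\overline{z}A}^{*}H_{\overline{z}A}=T_{A^{*}A}-T_{zA^{*}}T_{zA^{*}}^{*}$, and $C\in H^{\infty}$ gives $T_{C}^{*}T_{C}=T_{C^{*}C}=I-T_{A^{*}A}$; hence $W_{\Psi}W_{\Psi}^{*}=I-T_{zA^{*}}T_{zA^{*}}^{*}$, not yet using the extra hypothesis. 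Now $AA^{*}=I_{E}$ says precisely $(zA^{*})^{*}(zA^{*})=AA^{*}=I_{E}$, i.e.\ $zA^{*}$ is left inner, so $T_{zA^{*}}$ is an isometry, $T_{zA^{*}}T_{zA^{*}}^{*}$ a projection, and $W_{\Psi}$ a partial isometry. Part~(ii) is the mirror computation: the same steps give $V_{\Phi}^{*}V_{\Phi}=T_{(A^{*}A)(\overline{z})}+\big(I-T_{(A^{*}A)(\overline{z})}-T_{(C(\overline{z}))^{*}}T_{(C(\overline{z}))^{*}}^{*}\big)=I-T_{(C(\overline{z}))^{*}}T_{(C(\overline{z}))^{*}}^{*}$, and $CC^{*}=I_{F}$ makes $(C(\overline{z}))^{*}$ left inner, so $V_{\Phi}$ is a partial isometry.

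For part~(iii), $U$ unitary-valued gives in addition $AA^{*}=I_{E}$, $CC^{*}=I_{F}$ and $AC^{*}=CA^{*}=0$, so by~(i) and~(ii) both $W_{\Psi}$ and $V_{\Phi}$ are partial isometries; hence $W_{\Psi}^{*}W_{\Psi}$ is the orthogonal projection onto $(\ker W_{\Psi})^{\perp}$ and $V_{\Phi}V_{\Phi}^{*}$ the orthogonal projection onto the closed subspace $R(V_{\Phi})$. It therefore suffices to prove $W_{\Psi}^{*}W_{\Psi}+V_{\Phi}V_{\Phi}^{*}=I_{H_{E\oplus F}^{2}}$, for then the two projections are complementary and $\ker W_{\Psi}=R(V_{\Phi})$. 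I would check this operator identity on the $2\times2$ block decomposition of $H_{E}^{2}\oplus H_{F}^{2}$: the $(1,1)$ block is $H_{\overline{z}A}H_{\overline{z}A}^{*}+T_{zA(\overline{z})}T_{zA(\overline{z})}^{*}$, and the third identity with $\Phi=\overline{z}A$, $\Psi=(zA^{*})(\overline{z})$, together with $AA^{*}=I_{E}$, collapses $H_{\overline{z}A}H_{\overline{z}A}^{*}$ to $I-T_{zA(\overline{z})}T_{zA(\overline{z})}^{*}$; the $(2,2)$ block $T_{C}T_{C}^{*}+H_{C(\overline{z})}H_{C(\overline{z})}^{*}$ is handled symmetrically with the first identity, $CC^{*}=I_{F}$ and $H_{C(\overline{z})}^{*}=H_{C^{*}}$; and the $(1,2)$ block vanishes because the fourth identity gives $H_{\overline{z}A}T_{C}^{*}=H_{\overline{z}AC^{*}}-T_{zA(\overline{z})}H_{C^{*}}=-T_{zA(\overline{z})}H_{C^{*}}$ by $AC^{*}=0$, which cancels $T_{zA(\overline{z})}H_{C(\overline{z})}^{*}=T_{zA(\overline{z})}H_{C^{*}}$; the $(2,1)$ block is the adjoint of $(1,2)$.

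The obstacle is computational rather than conceptual: it lies in establishing the four product identities for rectangular operator-valued symbols and then applying them while keeping disciplined track of the substitutions $z\mapsto\overline{z}$, of the various pointwise adjoints, and of which symbols are analytic or co-analytic (so the product rules degenerate as needed). The cross-term $(1,2)$ block, where $AC^{*}=0$ must be fed in and $H_{C(\overline{z})}^{*}$ reduced to $H_{C^{*}}$, is where a sign or an adjoint is easiest to mishandle.
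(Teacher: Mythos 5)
Your proposal is correct and follows essentially the same route as the paper: both proofs compute $W_{\Psi}W_{\Psi}^{\ast}$ and $V_{\Phi}^{\ast}V_{\Phi}$ via the standard Toeplitz--Hankel product identities together with $A^{\ast}A+C^{\ast}C=I_{E_{0}}$, identify the result as $I$ minus the range projection of an analytic left inner Toeplitz operator, and in (iii) verify $W_{\Psi}^{\ast}W_{\Psi}+V_{\Phi}V_{\Phi}^{\ast}=I$ blockwise (the "straightforward computation" the paper omits) and conclude $\ker(W_{\Psi})=R(V_{\Phi})$ from complementary projections. Your bookkeeping of the substitutions $z\mapsto\overline{z}$, the adjoint symbol identities, and the cross-term cancellation via $AC^{\ast}=0$ is accurate.
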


\begin{proof}
(i) Note that $A(z)A(z)^{\ast}=I_{E}$ a.e. $z \in\mathbb{T}$ just says
$zA^{\ast}$ is left inner. Thus $T_{zA^{\ast}}$ is a partial isometry. Now%
\begin{align*}
W_{\Psi}W_{\Psi}^{\ast}  &  =\left[
\begin{array}
[c]{cc}%
H_{\overline{z}A }^{\ast} & T_{C }^{\ast}%
\end{array}
\right]  \left[
\begin{array}
[c]{c}%
H_{\overline{z}A }\\
T_{C }%
\end{array}
\right]  =H_{\overline{z}A }^{\ast}H_{\overline{z}A }+T_{C }^{\ast}T_{C }\\
&  =T_{zA ^{\ast}\overline{z}A }-T_{\overline{z}A }^{\ast}T_{\overline{z}A
}+T_{C ^{\ast}C }=I_{H^{2}_{E_{0}}}-T_{\overline{z}A }^{\ast}T_{\overline{z}A
},
\end{align*}
where in the last equality we used that $A(z)^{*}A(z)+C(z)^{*}C(z) =
U(z)^{*}U(z) = I_{E_{0}}$ a.e. $z \in\mathbb{T}$. Therefore, $W_{\Psi}$ is a
partial isometry and $W_{\Psi}$ maps $\ker(W_{\Psi})^{\perp}$ isometrically
onto the model space $K_{zA^{\ast}}=H_{E_{0}}^{2}\ominus zA^{\ast}H_{E}^{2}.$

(ii) Note that $C(z)C(z)^{\ast}=I_{F}$ a.e. $z \in\mathbb{T}$ just says
$C(\overline{z})^{\ast}$ is left inner. Thus $T_{C(\overline{z})^{\ast}}$ is a
partial isometry. Now%
\begin{align*}
V_{\Phi}^{\ast}V_{\Phi}  &  =\left[
\begin{array}
[c]{cc}%
T_{zA(\overline{z})}^{\ast} & H_{C(\overline{z})}^{\ast}%
\end{array}
\right]  \left[
\begin{array}
[c]{c}%
T_{zA(\overline{z})}\\
H_{C(\overline{z})}%
\end{array}
\right]  =T_{A(\overline{z}) ^{\ast}A(\overline{z}) }+H_{C(\overline{z}%
)}^{\ast}H_{C(\overline{z})}\\
&  =T_{A(\overline{z})^{\ast}A(\overline{z})}+T_{C(\overline{z})^{\ast
}C(\overline{z})}-T_{C(\overline{z})}^{\ast}T_{C(\overline{z})}=I_{H^{2}%
_{E_{0}}}-T_{C(\overline{z})^{\ast}}T_{C(\overline{z})^{\ast}}^{\ast}.
\end{align*}
Therefore, $V_{\Phi}$ is a partial isometry and $V_{\Phi}^{*}$ maps
$\ker(V_{\Phi}^{*}) ^{\perp}$ isometrically model space $K_{C(\overline
{z})^{\ast}}=H_{E_{0}}^{2}\ominus C(\overline{z})^{\ast}H_{F}^{2}.$

(iii) If $U$ is unitary-valued, then for a.e. $z \in\mathbb{T}$
\[
A(z)A(z)^{\ast}=I_{E}, C(z)C(z)^{\ast}=I_{F}, A(z)C(z)^{*} = 0.
\]
So $W_{\Psi}$ and $V_{\Phi}$ are partial isometries. By a straightforward
computation as in (i) and (ii), we have $W_{\Psi}^{*} W_{\Psi}+ V_{\Phi
}V_{\Phi}^{*} = I_{H^{2}_{E\oplus F}}$. It then follows that $\ker(W_{\Psi}) =
R(V_{\Phi})$.
\end{proof}

\begin{remark}
\label{wpsivphi} In the above lemma, $W_{\Psi}$ and $V_{\Phi}$ are a little
different from the expressions (\ref{wpsi}) and (\ref{defvfphi}) in the
introduction. In this paper, we are in the situation that $E_{0} \subset E
\oplus F$, so the above $W_{\Psi}$ and $V_{\Phi}$ are operators from
$H^{2}_{E} \oplus H^{2}_{F}$ to $H^{2}_{E} \oplus H^{2}_{F}$. By applying a
constant unitary $W$ as in Theorem \ref{Helson}, we may assume either $E_{0}
\subset E$ or $E \subset E_{0}$. Now let $E_{0}^{\perp}= (E\oplus F) \ominus
E_{0}$,
\[
W_{\widetilde{\Psi}}=\left[
\begin{array}
[c]{cc}%
H_{\overline{z}A}^{\ast} & T_{C}^{\ast}\\
0 & 0
\end{array}
\right]  , \widetilde{\Psi}(z) =\left[
\begin{array}
[c]{cc}%
\overline{z}A(z) & 0\\
C(z) & 0
\end{array}
\right]  :E_{0}\oplus E_{0}^{\perp}\rightarrow E\oplus F.
\]
Note that when $E_{0} \subset E$, $E_{0} \oplus E_{0}^{\perp}= [E_{0}
\oplus(E\ominus E_{0})]\oplus F$, and when $E_{0} \supset E$, $E_{0} \oplus
E_{0}^{\perp}= E \oplus[(E_{0} \ominus E) \oplus F_{1}]$ with $F_{1} = F
\ominus(E_{0} \ominus E)$. Thus $W_{\widetilde{\Psi}}$ has the form
(\ref{wpsi}), and $W_{\widetilde{\Psi}}$ and $W_{\Psi}$ have the same action
on $H^{2}_{E} \oplus H^{2}_{F}$. The same can be said about $V_{\Phi}$.
\end{remark}

\begin{definition}
Assume $\Phi\in L^{\infty}_{B(E)}$. $\Phi$ is called partial isometry-valued
if $\Phi(z)^{*}\Phi(z) = I_{E_{0}}$ a.e. $z \in\mathbb{T}$ for some $E_{0}
\subset E$.
\end{definition}

In Lemma \ref{partial1}, $\Psi\in L^{\infty}_{B(E_{0},E\oplus F)}$ is
isometry-valued with $\Psi(z)^{\ast}\Psi(z)=I_{E_{0}}$ a.e. $z \in\mathbb{T}$.
Since $\widetilde{\Psi}(z)^{*} \widetilde{\Psi}(z) = \Psi(z)^{*} \Psi(z)
\oplus0 = I_{E_{0}} \oplus0$ a.e. $z \in\mathbb{T}$, we have $\widetilde{\Psi}
\in L^{\infty}_{B(E\oplus F)}$ is partial isometry-valued. In the following,
we will not differentiate $W_{\Psi}$ and $W_{\widetilde{\Psi}}$, $V_{\Phi}$
and $V_{\widetilde{\Phi}}$. The only difference is whether the symbol is
isometry-valued or partial isometry-valued, which should be clear from the context.

\begin{theorem}
\label{type1k}Let $N_{3}=UH_{E_{0}}^{2}$ be such that $zH_{E}^{2}\subset
N_{3}\subset L_{E}^{2}\oplus H_{F}^{2}$, where $U \in L_{B(E_{0},E\oplus
F)}^{\infty}$ is isometry-valued, $E_{0} \subset E \oplus F$. Write%
\[
U(z)=\left[
\begin{array}
[c]{c}%
U_{E}(z)\\
U_{F}(z)
\end{array}
\right]  =\left[
\begin{array}
[c]{c}%
A(z)\\
C(z)
\end{array}
\right]  :E_{0}\rightarrow E\oplus F \label{u00}%
\]
Let $N=\left(  L_{E}^{2}\oplus H_{F}^{2}\right)  \ominus J_{2}N_{3}$. Then $N
=\ker(W_{\Psi})$, where%
\begin{align}
\label{wpsiandpsi}W_{\Psi}=\left[
\begin{array}
[c]{cc}%
H_{\overline{z}A }^{\ast} & T_{C }^{\ast}%
\end{array}
\right]  ,\Psi(z)=\left[
\begin{array}
[c]{c}%
\overline{z}A(z)\\
C(z)
\end{array}
\right]  .
\end{align}
Furthermore, if $U $ is complementary isometry-valued, then $W_{\Psi}$ is a
partial isometry.
\end{theorem}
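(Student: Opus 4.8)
The plan is to identify $N = (L_E^2 \oplus H_F^2) \ominus J_2 N_3$ directly as $\ker(W_\Psi)$ by computing orthogonality relations. First I would unravel the action of $J_2$: since $J_2 = J_1 \oplus I$ and $J_1 f(z) = f(\bar z)$, an element $g \oplus h \in H_E^2 \oplus H_F^2$ lies in $N$ iff it is orthogonal in $L_E^2 \oplus H_F^2$ to $J_2 N_3 = J_2 (U H_{E_0}^2)$, i.e. iff $\langle g, J_1 (U_E k) \rangle_{L_E^2} + \langle h, U_F k \rangle_{L_F^2} = 0$ for all $k \in H_{E_0}^2$. Here I would use that $U_E = A$ with $zA^* \in H^\infty$ and $U_F = C \in H^\infty$, so both $J_1(U_E k)$ and $U_F k$ can be handled via the $J$-identities ($JM_z^* = M_z J$, $JP = QJ$) that were recorded in the introduction; the point is that $J_1$ on $L_E^2$ is essentially $z$ times the earlier $J$ on $L_E^2$, so $J_1(Ak) = z \cdot \overline{z A(\bar z)} \cdots$ — I would convert the anti-analytic pairing into a Hankel-operator pairing. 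Concretely, $\langle g, J_1(Ak)\rangle = \langle g, \text{(an element expressible through } H_{\bar z A}) \rangle$, and $\langle h, Ck \rangle = \langle h, T_C k \rangle = \langle T_C^* h, k\rangle$.

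The key computation is to show that $\langle g, J_1(A k)\rangle_{L^2_E} = \langle H_{\bar z A}^* g, k \rangle_{H^2_{E_0}}$ for all $k \in H^2_{E_0}$. Recall $H_\Phi h = JQ[\Phi h]$, so $H_{\bar z A}$ maps $H^2_E$ to $H^2_{E_0}$, and its adjoint is $H_{(\bar z A)(\bar z)^*} = H_{z A(\bar z)^*}$ acting on... — one must be careful about which space is the domain, since here the symbol $\bar z A$ maps $E \to E_0$ and $A$ is not analytic but $zA^*$ is. I would verify the identity by writing out both sides as integrals over $\mathbb{T}$: the left side is $\int \langle g(z), \overline{z} A(\bar z) k(\bar z)\rangle_E \frac{|dz|}{2\pi}$, and after the substitution $z \mapsto \bar z$ this becomes $\int \langle g(\bar z), z A(z) k(z)\rangle \frac{|dz|}{2\pi}$; expanding $g(\bar z)$ and $A(z)k(z)$ in Fourier series and matching against the definition $H_{\bar z A}^* = H_{(\bar z A(\bar z))^*}$ gives the claim. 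Once this is done, the orthogonality condition becomes $H_{\bar z A}^* g + T_C^* h = 0$, i.e. $g \oplus h \in \ker(W_\Psi)$ with $W_\Psi = [\,H_{\bar z A}^* \ \ T_C^*\,]$ as in \eqref{wpsiandpsi}.

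For the converse inclusion I would run the same chain of equalities backwards: any $g \oplus h \in H^2_E \oplus H^2_F$ with $H_{\bar z A}^* g + T_C^* h = 0$ is orthogonal to all of $J_2(U H^2_{E_0})$, hence lies in $N$ — provided we also know $N \subset H^2_E \oplus H^2_F$, which follows from the hypothesis $z H^2_E \subset N_3 \subset L^2_E \oplus H^2_F$ via Theorem~\ref{main} (that is exactly the content of the reduction $N = J_2[(L_E^2\oplus H_F^2)\ominus N_3]$ being inside $H^2_E \oplus H^2_F$). The last sentence is immediate from Lemma~\ref{partial1}(i): if $U$ is complementary isometry-valued then by Proposition~\ref{ab1}(i) we have $A(z)A(z)^* = I_E$ a.e. (the $V_E = 0$ condition forces $U_E$ to have orthonormal rows), so Lemma~\ref{partial1}(i) applies verbatim and $W_\Psi$ is a partial isometry. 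The main obstacle is purely bookkeeping: getting the $J_1$-versus-$J$ distinction and the domain/codomain of the non-analytic Hankel symbol $\bar z A$ exactly right so that the pairing identity $\langle g, J_1(Ak)\rangle = \langle H_{\bar z A}^* g, k\rangle$ holds on the nose, with no stray factor of $z$.
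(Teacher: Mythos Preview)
Your proposal is correct and follows essentially the same route as the paper. The paper streamlines the pairing computation by writing $f(\bar z)\oplus g \perp UH_{E_0}^2$ as $PU^*[f(\bar z)\oplus g]=0$ and then identifying $P[A^* f(\bar z)] = PJ[\bar z A^*(\bar z) f] = H_{\bar z A^*(\bar z)}f = H_{\bar z A}^* f$ directly via the $J$-identities (rather than by Fourier expansion), but otherwise the argument---including the appeal to Proposition~\ref{ab1} and Lemma~\ref{partial1} for the partial-isometry claim---is identical.
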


\begin{proof}
By Lemma \ref{condition} and the remark after Proposition \ref{full}, $C,
zA^{*}\in H^{\infty}.$ Note that $N \subset H^{2}_{E} \oplus H^{2}_{F}$. Let
$f\in H_{E}^{2}$ and $g\in H_{F}^{2},$ then $f \oplus g \perp J_{2}N_{3}$,
i.e. $f(\overline{z})\oplus g \perp UH_{E_{0}}^{2}$ if and only if $U ^{\ast
}\left[  f(\overline{z})\oplus g \right]  \perp H_{E_{0}}^{2},$ if and only if
($P$ is the projection to the analytic part of a vector-valued $L^{2}$
function)
\begin{align*}
0  &  =PU ^{\ast}\left[
\begin{array}
[c]{c}%
f(\overline{z})\\
g
\end{array}
\right]  =P\left[  A^{\ast} f(\overline{z})+C ^{\ast}g\right] \\
&  =H_{\overline{z}A }^{\ast}f +T_{C }^{\ast}g =W_{\Psi}\left[
\begin{array}
[c]{c}%
f\\
g
\end{array}
\right]  ,
\end{align*}
where in the last equality, we used that $P\left[  A^{\ast} f(\overline
{z})\right]  =PJ\left[  \overline{z}A^{\ast}(\overline{z})f \right]
=H_{\overline{z}A^{\ast}(\overline{z})}f=H_{\overline{z}A }^{\ast}f.$ Thus
$N=\ker(W_{\Psi}).$

If $U $ is complementary isometry-valued, then by Proposition \ref{ab1}, there
exists $V_{2} $ such that
\[
\left[
\begin{array}
[c]{cc}%
U & V
\end{array}
\right]  =\left[
\begin{array}
[c]{cc}%
A & 0\\
C & V_{2}%
\end{array}
\right]  \text{ is unitary-valued.}%
\]
Hence $\left[
\begin{array}
[c]{cc}%
U(z) & V(z)
\end{array}
\right]  \left[
\begin{array}
[c]{cc}%
U(z) & V(z)
\end{array}
\right]  ^{\ast}=I_{E\oplus F}$ implies that $A(z)A(z)^{\ast}=I_{E}.$ By Lemma
\ref{partial1}, $W_{\Psi}$ is a partial isometry.
\end{proof}

Let $\Psi\in L^{\infty}_{B(E_{0},E\oplus F)}$ be defined by (\ref{wpsiandpsi})
with $C \in H^{\infty}$. Then it is not hard to verify that $\ker(W_{\Psi})$
is an invariant subspace of $S_{E}\oplus S_{F}^{\ast}$, see also (\ref{funda2}).

\begin{corollary}
\label{maink1} Assume $E$ and $F$ are finite dimensional. A closed subspace
$N$ of $H_{E}^{2}\oplus H_{F}^{2}$ is a type I invariant subspace of
$S_{E}\oplus S_{F}^{\ast},$ if and only if $N=\ker(W_{\Psi})$ where $\Psi$ is
isometry-valued $\Psi(z)^{\ast}\Psi(z)=I_{E_{0}}$ a.e. $z\in\mathbb{T}$ with
$E_{0}\subset E\oplus F,$ $W_{\Psi}$ is a partial isometry, and $\Psi$ can be
written as
\[
\Psi(z)=\left[
\begin{array}
[c]{c}%
\overline{z}A(z)\\
C(z)
\end{array}
\right]  :E_{0}\rightarrow E\oplus F
\]
with $zA^{\ast},C\in H^{\infty}$ and $\text{rank}C(z)=\dim E_{0}-\dim E$.
Furthermore, the representation of $N=\ker(W_{\Psi})$ is unique in the
following sense: if there exists another $\Psi_{1}$ such that $N=\ker
(W_{\Psi_{1}})$ where $\Psi_{1}$ is isometry-valued $\Psi_{1}(z)^{\ast}%
\Psi_{1}(z)=I_{E_{1}}$ a.e. $z\in\mathbb{T}$ with $E_{1}\subset E\oplus F,$
and $\Psi_{1}$ can be written as
\[
\Psi_{1}(z)=\left[
\begin{array}
[c]{c}%
\overline{z}A_{1}(z)\\
C_{1}(z)
\end{array}
\right]  :E_{1}\rightarrow E\oplus F
\]
with $zA_{1}^{\ast},C_{1}\in H^{\infty}$ and $\text{rank}C_{1}(z)=\dim
E_{1}-\dim E$, then there exists a constant unitary $W:E_{0}\rightarrow E_{1}$
such that $\Psi=\Psi_{1}W.$
\end{corollary}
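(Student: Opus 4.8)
The plan is to combine three ingredients already established: the classification of type I invariant subspaces via $N_3 = UH_{E_0}^2$ in Proposition \ref{ab1}(ii), the identification $N = \ker(W_\Psi)$ in Theorem \ref{type1k}, and the uniqueness clause in Theorem \ref{Helson}. For the forward direction, suppose $N$ is a type I invariant subspace of $S_E \oplus S_F^*$. By Theorem \ref{main} and Definition \ref{typeiandii}, $N$ corresponds to $N_3 = UH_{E_0}^2$ with $zH_E^2 \subset N_3 \subset L_E^2 \oplus H_F^2$, and by Theorem \ref{Helson} (after applying a constant unitary) we may take $U$ isometry-valued with $E_0 \subset E \oplus F$. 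Since $E, F$ are finite dimensional, $U$ is automatically complementary isometry-valued, so Proposition \ref{ab1}(ii) gives $U_F = C$, $zU_E^* = zA^*$ in $H^\infty$ and $\text{rank}\, C(z) = \dim E_0 - \dim E$. Theorem \ref{type1k} then yields $N = \ker(W_\Psi)$ with $\Psi$ of the stated form, and its last sentence gives that $W_\Psi$ is a partial isometry. Conversely, given such a $\Psi$, set $U = \begin{bmatrix} A \\ C \end{bmatrix}$; the condition $\Psi(z)^*\Psi(z) = I_{E_0}$ says $U$ is isometry-valued, $zA^*, C \in H^\infty$ together with the rank condition put us exactly in the hypotheses of Proposition \ref{ab1}(ii), so $N_3 := UH_{E_0}^2$ satisfies $zH_E^2 \subset N_3 \subset L_E^2 \oplus H_F^2$. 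By Theorem \ref{main} the corresponding $N = (L_E^2 \oplus H_F^2) \ominus J_2 N_3$ is an $X$-invariant subspace, it is type I by construction, and Theorem \ref{type1k} confirms $N = \ker(W_\Psi)$.

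For the uniqueness clause, suppose $\Psi$ and $\Psi_1$ both represent $N = \ker(W_\Psi) = \ker(W_{\Psi_1})$ in the indicated form. The key observation is that the representation in Theorem \ref{type1k} is reversible: from $W_\Psi$ one recovers $U$ (hence $N_3 = UH_{E_0}^2$) because the computation in the proof of Theorem \ref{type1k} shows $f \oplus g \perp J_2 N_3 \iff W_\Psi \begin{bmatrix} f \\ g \end{bmatrix} = 0$, i.e. $J_2 N_3 = N^\perp \cap (H_E^2 \oplus H_F^2)$ determines $N_3$ uniquely from $N$. Thus $\Psi$ and $\Psi_1$ give rise to the same invariant subspace $N_3 = U H_{E_0}^2 = U_1 H_{E_1}^2$ of $M_z \oplus M_z$. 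Now invoke the uniqueness statement in Theorem \ref{Helson}: since $U$ and $U_1$ are both isometry-valued and $U H_{E_0}^2 = U_1 H_{E_1}^2$, there is a constant unitary $W : E_0 \to E_1$ with $U = U_1 W$. Reading off the two block rows gives $A = A_1 W$ and $C = C_1 W$, hence $\overline{z}A = \overline{z}A_1 W$ and therefore $\Psi = \Psi_1 W$, as claimed.

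The main obstacle I anticipate is not any single hard estimate but rather bookkeeping: one must be careful that the passage "type I invariant subspace $\leftrightarrow$ $N_3 = UH_{E_0}^2$" genuinely loses no information, in particular that the constant-unitary normalization in Theorem \ref{Helson} used to place $E_0 \subset E \oplus F$ is compatible with the rank condition and does not interfere with the uniqueness assertion (it doesn't, since the uniqueness is stated \emph{up to} a constant unitary). A second point requiring care is verifying that the $\Psi$ produced in the converse direction really is isometry-valued with the correct $E_0$ and that $W_\Psi$ is then a partial isometry — this uses that the finite-dimensionality of $E, F$ forces $U$ to be complementary isometry-valued, so the hypotheses of Lemma \ref{partial1}(i) are met via $A(z)A(z)^* = I_E$ (which follows from the rank condition exactly as in the proof of Theorem \ref{type1k}). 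Apart from these structural checks, the argument is a direct assembly of the cited results.
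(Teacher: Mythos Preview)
Your proposal is correct and follows essentially the same approach as the paper: combine Proposition \ref{ab1}(ii) with Theorem \ref{type1k} for the characterization, and invoke the uniqueness clause of Theorem \ref{Helson} (via the one-to-one correspondence of Theorem \ref{main}) for the uniqueness of $\Psi$ up to a constant unitary. One small slip: you write $J_2 N_3 = N^\perp \cap (H_E^2 \oplus H_F^2)$, but the correct relation from Theorem \ref{main} is $J_2 N_3 = (L_E^2 \oplus H_F^2) \ominus N$; this does not affect the argument since the point is simply that $N$ determines $N_3$ uniquely.
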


\begin{proof}
The first part follows from Proposition \ref{ab1} and Theorem \ref{type1k}.
For uniqueness, by the proof of above theorem, $N=\left(  L_{E}^{2}\oplus
H_{F}^{2}\right)  \ominus J_{2}N_{3}$ where%
\[
N_{3}=UH_{E_{0}}^{2}=U_{1}H_{E_{1}}^{2},U(z)=\left[
\begin{array}
[c]{c}%
A(z)\\
C(z)
\end{array}
\right]  ,U_{1}(z)=\left[
\begin{array}
[c]{c}%
A_{1}(z)\\
C_{1}(z)
\end{array}
\right]  .
\]
By the uniqueness part of Theorem \ref{Helson}, there exists a constant
unitary $W:E_{0}\rightarrow E_{1}$ such that $U=U_{1}W.$ Consequently,
$\Psi=\Psi_{1}W.$
\end{proof}

Let $\Psi\in L^{\infty}_{B(E_{0},E\oplus F)}$ be defined by (\ref{wpsiandpsi})
with $C \in H^{\infty}$. If we don't put any other conditions on $\Psi$, then
the uniqueness question for $\Psi$ with $N = \ker(W_{\Psi})$ seems
complicated. See also Theorem \ref{typeiikernel}. We state the following question.

\begin{question}
\label{kernelunique} Let $N=\ker(W_{\Psi_{i}})$ be invariant subspaces of
$S_{E}\oplus S_{F}^{\ast}$, where $\Psi_{i}\in L_{B(E_{i},E\oplus F)}^{\infty
}$ is isometry-valued for $i=1,2$, what is the relationship between $\Psi_{1}$
and $\Psi_{2}?$
\end{question}

Next we discuss type II invariant subspace of $S_{E}\oplus S_{F}^{\ast}.$ We
first prove a lemma. Recall from the introduction that a left inner function
$\Theta$ is right extremal if $\Theta$ can not be factored as $\Theta
_{1}\Delta$ where $\Theta_{1}$ is left inner and $\Delta$ is a non-constant
square inner function. The concept of right extremal inner function first
appeared in \cite{GuDOParkj}, where the kernels of Hankel operators and their
connection to invariant subspaces of $S_{E}$ were studied. See also \cite{CHL}
where the invariant subspaces of $S_{E}$ of infinite multiplicity is discussed.

\begin{lemma}
\label{rightextremal}Let $M_{K}\subset L_{E}^{2}$ where $K$ is a range
function of $E.$ Let $H_{1}=\left\{  h \in H_{E}^{2}:h \perp M_{K}\right\}  .$
Then $H_{1}$ is an invariant subspace of $S_{E}$. If $H_{1} \neq\{0\}$, then
there exists a left inner, right extremal function $\Theta$ such that
$H_{1}=\Theta H_{E_{1}}^{2}$ for some $E_{1} \subset E$.
\end{lemma}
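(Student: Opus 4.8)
The plan is: verify $S_E$-invariance directly, obtain a Beurling--Lax--Halmos representation $H_1=\Theta H_{E_1}^2$, and then prove that the particular $\Theta$ attached to $H_1$ must be right extremal by exploiting that $H_1$ is the largest part of $H_E^2$ sitting inside the reducing subspace $M_K^{\perp}$.

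First I would record that $H_1=H_E^2\cap M_K^{\perp}$ is closed and that $M_K^{\perp}$ equals $M_{K'}$, the reducing subspace with range function $K'(z)=E\ominus K(z)$ (a function $f\in L_E^2$ is orthogonal to $M_K$ iff $f(z)\perp K(z)$ a.e.). Invariance is then immediate: for $h\in H_1$ and $g\in M_K$ we have $\overline z g\in M_K$ (its values still lie in $K(z)$ a.e.), so $\langle zh,g\rangle=\langle h,\overline z g\rangle=0$; since $zh\in H_E^2$ as well, $zh\in H_1$. By Theorem \ref{BLH}(i) there are $E_1\subset E$ and a left inner $\Theta\in H_{B(E_1,E)}^{\infty}$ with $H_1=\Theta H_{E_1}^2$ (when $H_1=H_E^2$ we take $\Theta=I_E$, which is trivially right extremal). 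Note the $\Theta$ here is essentially forced on us by $H_1$, so the task is to show \emph{this} $\Theta$ is right extremal, not to replace it by a better one.

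Suppose for contradiction that $\Theta=\Theta_1\Delta$ with $\Theta_1\in H_{B(E_1,E)}^{\infty}$ left inner and $\Delta\in H_{B(E_1)}^{\infty}$ a non-constant square inner function. Since $\Delta(z)$ is unitary on $E_1$ for a.e.\ $z$, we have $\Delta(z)E_1=E_1$, hence $\Theta_1(z)E_1=\Theta_1(z)\Delta(z)E_1=\Theta(z)E_1$ a.e. From $\Theta H_{E_1}^2=H_1\subset M_{K'}$ we read off $\Theta(z)E_1\subset K'(z)$ a.e.\ (using separability of $E_1$ to fix a single null set), and therefore $\Theta_1(z)E_1\subset K'(z)$ a.e., i.e.\ $\Theta_1 H_{E_1}^2\subset M_{K'}$. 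Since also $\Theta_1 H_{E_1}^2\subset H_E^2$, we get $\Theta_1 H_{E_1}^2\subset H_E^2\cap M_{K'}=H_1=\Theta_1\Delta H_{E_1}^2$. As $M_{\Theta_1}$ is an isometry on $L_{E_1}^2$ and hence injective, this yields $H_{E_1}^2\subset\Delta H_{E_1}^2$, so $\Delta H_{E_1}^2=H_{E_1}^2$. But a non-constant square inner $\Delta$ always satisfies $\Delta H_{E_1}^2\subsetneq H_{E_1}^2$ (otherwise $T_\Delta$ would be a surjective isometry on $H_{E_1}^2$ commuting with $T_z$, forcing $T_\Delta^{-1}=T_\Delta^{*}$ to be an analytic Toeplitz operator and hence $\Delta$ to be a constant unitary). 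This contradiction shows $\Theta$ is right extremal.

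\textbf{Main obstacle.} The one place that needs a little care is the final implication---that a non-constant square inner function cannot be invertible in $H^{\infty}$ (equivalently, cannot satisfy $\Delta H_{E_1}^2=H_{E_1}^2$); in finite dimensions this is a determinant/maximum-modulus argument, and in general it follows by comparing Fourier coefficients of $\Delta$ and $\Delta^{-1}=\Delta^{*}\in H^{\infty}$. All the remaining steps are routine manipulations with range functions together with the uniqueness clause of the Beurling--Lax--Halmos theorem.
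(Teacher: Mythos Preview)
Your proof is correct and follows essentially the same route as the paper's: establish $S_E$-invariance, apply BLH, then derive a contradiction from a factorization $\Theta=\Theta_1\Delta$ by showing $\Theta_1 H_{E_1}^2\subset H_1$. The only cosmetic difference is that the paper phrases the key step as $\Theta(z)^*f(z)=0$ for $f\in M_K$ (citing the Helson--Lowdenslager decomposition), whereas you reach the same conclusion via the complementary range function $K'=K^{\perp}$ and the pointwise equality $\Theta_1(z)E_1=\Theta(z)E_1$; your version is arguably a touch more self-contained and also makes explicit why a non-constant square inner $\Delta$ cannot satisfy $\Delta H_{E_1}^2=H_{E_1}^2$, which the paper leaves implicit.
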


\begin{proof}
Note that $H_{1} = H^{2}_{E} \cap M_{K}^{\perp}$, so it is closed. If $h \in
H_{1}, h \perp M_{K},$ then $S_{E}h \perp S_{E}M_{K}$ since $S_{E}$ is an
isometry. But $S_{E}M_{K}=M_{K}.$ Thus $z h\perp M_{K}$ and $zh\in H_{1}$. In
conclusion, $H_{1}$ is an invariant subspace of $S_{E}.$ By BLH Theorem, if
$H_{1} \neq\{0\}$, then $H_{1}=\Theta H_{E_{1}}^{2}$ for some $E_{1} \subset
E$ and $\Theta$ left inner. Note that $H_{1} \oplus M_{K}$ is an invariant
subspace of $M_{z}$ on $L^{2}_{E}$, Theorem \ref{Helson} then implies that for
any $f \in M_{K}$, $\Theta(z)^{*}f(z)=0$ a.e. $z \in\mathbb{T}$, also see Page
165 \cite{AlemanMalman}. If $\Theta$ is not right extremal, then
$\Theta=\Theta_{1}\Delta$, where $\Theta_{1} $ is left inner and $\Delta$ is a
non-constant square inner function. So for any $f \in M_{K}$, $\Theta
_{1}(z)^{*} f(z) = \Delta(z)\Theta(z)^{*}f(z)=0$ a.e. $z \in\mathbb{T}$. This
clearly implies that $\Theta_{1}H^{2}_{E_{1}} \perp M_{K}$ and $\Theta
_{1}H^{2}_{E_{1}} \subset\Theta H^{2}_{E_{1}}$. However, this contradicts the
fact that $\Delta$ is a non-constant square inner function. Therefore,
$\Theta$ is right extremal.
\end{proof}

\begin{theorem}
\label{type2k}Let $N_{3}=U H_{E_{0}}^{2}\oplus M_{K}$ be such that $zH_{E}%
^{2}\subset N_{3}\subset L_{E}^{2}\oplus H_{F}^{2},$ where $U \in
L_{B(E_{0},E\oplus F)}^{\infty}$ is isometry-valued and $K$ is a range
function of $E$, $E_{0} \subset E \oplus F$. Write
\[
U(z)=\left[
\begin{array}
[c]{c}%
U_{E}(z)\\
U_{F}(z)
\end{array}
\right]  =\left[
\begin{array}
[c]{c}%
zA(\overline{z})\\
C(z)
\end{array}
\right]  . \label{u1}%
\]
Let $N=\left(  L_{E}^{2}\oplus H_{F}^{2}\right)  \ominus J_{2}N_{3}$. Then
$N=\ker(W_{\Psi})\cap( \Theta H_{E_{1}}^{2}\oplus H_{F}^{2}) $, where%
\begin{equation}
W_{\Psi}=\left[
\begin{array}
[c]{cc}%
H_{A(\overline{z})}^{\ast} & T_{C}^{\ast}%
\end{array}
\right]  :H_{E}^{2}\oplus H_{F}^{2}\rightarrow H_{E_{0}}^{2},\Psi(z)=\left[
\begin{array}
[c]{c}%
A(\overline{z})\\
C(z)
\end{array}
\right]  , \label{u2}%
\end{equation}
and $\Theta$ is zero or left inner, right extremal with $\Theta(z)^{\ast
}\Theta(z)=I_{E_{1}}$ a.e. $z \in\mathbb{T}$ and $E_{1}\subset E.$
Furthermore, if $A \in H^{\infty},$ then $A =\Theta\Gamma$ where $\Gamma\in
H_{B(E_{0},E_{1})}^{\infty},$ and
\[
N=\left\{  \left[
\begin{array}
[c]{c}%
\Theta f\\
g
\end{array}
\right]  \in H_{E}^{2}\oplus H_{F}^{2}:\left[
\begin{array}
[c]{c}%
f\\
g
\end{array}
\right]  \in\ker(W_{\Psi_{1}})\subset H_{E_{1}}^{2}\oplus H_{F}^{2}\right\}
\]
with $\Psi_{1} = \left[
\begin{array}
[c]{cc}%
\Gamma(\overline{z}) & C
\end{array}
\right]  ^{tr} \in L^{\infty}_{B(E_{0},E_{1} \oplus F)}$ isometry-valued.
\end{theorem}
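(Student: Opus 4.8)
The plan is to unwind the definition $N=\left(L_{E}^{2}\oplus H_{F}^{2}\right)\ominus J_{2}N_{3}$ directly and to split the resulting orthogonality condition using the orthogonal decomposition $N_{3}=UH_{E_{0}}^{2}\oplus M_{K}$, which is orthogonal because $K(z)\perp U(z)E_{0}$ and, by Lemma \ref{condition}(i), $M_{K}\subset L_{E}^{2}$. First I would note that $zH_{E}^{2}\subset N_{3}$ gives $J_{2}(zH_{E}^{2})=\overline{zH_{E}^{2}}\oplus\{0\}\subset J_{2}N_{3}$, hence $N\subset H_{E}^{2}\oplus H_{F}^{2}$. For $f\in H_{E}^{2}$, $g\in H_{F}^{2}$, since $J_{1}$ is unitary and self-adjoint, $f\oplus g\perp J_{2}N_{3}$ if and only if $J_{1}f\oplus g\perp N_{3}$, i.e.\ if and only if $J_{1}f\oplus g\perp UH_{E_{0}}^{2}$ (condition A) and $J_{1}f\oplus g\perp M_{K}$; since $M_{K}\subset L_{E}^{2}$ the latter is $J_{1}f\perp M_{K}$ (condition B).

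For condition A: it says $P[U^{\ast}(J_{1}f\oplus g)]=0$, and since $U_{E}(z)=zA(\overline{z})$, $U_{F}(z)=C(z)$, the $L_{E_{0}}^{2}$-function $U^{\ast}(J_{1}f\oplus g)$ equals pointwise $\overline{z}A(\overline{z})^{\ast}f(\overline{z})+C(z)^{\ast}g(z)$. Using $(J[\Phi h])(z)=\overline{z}\Phi(\overline{z})h(\overline{z})$ one gets $P[\overline{z}A(\overline{z})^{\ast}f(\overline{z})]=H_{A(z)^{\ast}}f=H_{A(\overline{z})}^{\ast}f$ (the last equality by $H_{\Phi}^{\ast}=H_{\Phi(\overline{z})^{\ast}}$) and $P[C(z)^{\ast}g(z)]=T_{C}^{\ast}g$; so condition A is exactly $W_{\Psi}(f\oplus g)=0$ with $W_{\Psi},\Psi$ as in (\ref{u2}). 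For condition B: setting $K^{\prime}(z)=K(\overline{z})$ gives $J_{1}M_{K}=M_{K^{\prime}}\subset L_{E}^{2}$, so condition B reads $f\perp M_{K^{\prime}}$; thus the admissible first components form $H_{1}=H_{E}^{2}\cap M_{K^{\prime}}^{\perp}$, and Lemma \ref{rightextremal} applied to $K^{\prime}$ gives $H_{1}=\Theta H_{E_{1}}^{2}$ with $\Theta$ zero or left inner and right extremal, $\Theta(z)^{\ast}\Theta(z)=I_{E_{1}}$, $E_{1}\subset E$. Combining conditions A and B yields $N=\ker(W_{\Psi})\cap(\Theta H_{E_{1}}^{2}\oplus H_{F}^{2})$.

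For the ``furthermore'' part, assume $A\in H^{\infty}$; if $\Theta=0$ the argument below forces $A=0$ and every assertion is trivial, so assume $\Theta\neq0$. By Lemma \ref{condition}(iii), $K(w)\oplus U_{E}(w)E_{0}=E$ a.e.; putting $w=\overline{z}$ and using $U_{E}(\overline{z})=\overline{z}A(z)$ gives $A(z)E_{0}=E\ominus K^{\prime}(z)\perp K^{\prime}(z)$ a.e., so $\langle Ag,h\rangle_{L_{E}^{2}}=0$ for all $g\in H_{E_{0}}^{2}$, $h\in M_{K^{\prime}}$; since $A\in H^{\infty}$ also gives $Ag\in H_{E}^{2}$, we conclude $R(T_{A})\subset H_{1}=\Theta H_{E_{1}}^{2}$. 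As $\Theta$ is left inner, $T_{\Theta}$ is an isometry onto $\Theta H_{E_{1}}^{2}$, so $\Gamma:=T_{\Theta}^{\ast}T_{A}$ satisfies $T_{\Theta}\Gamma=T_{A}$; cancelling the injective $T_{\Theta}$ in $T_{\Theta}\Gamma T_{z}=T_{A}T_{z}=T_{z}T_{A}=T_{\Theta}T_{z}\Gamma$ shows $\Gamma T_{z}=T_{z}\Gamma$, hence $\Gamma=T_{\Gamma}$ with $\Gamma\in H_{B(E_{0},E_{1})}^{\infty}$ and $A=\Theta\Gamma$. Then for $f\in H_{E_{1}}^{2}$, $g\in H_{F}^{2}$, using $\Theta(\overline{z})^{\ast}\Theta(\overline{z})=I_{E_{1}}$ one computes $H_{A(\overline{z})}^{\ast}(\Theta f)=H_{\Gamma(\overline{z})}^{\ast}f$, so $W_{\Psi}(\Theta f\oplus g)=W_{\Psi_{1}}(f\oplus g)$; since each element of $\Theta H_{E_{1}}^{2}$ is $\Theta f$ for a unique $f\in H_{E_{1}}^{2}$, this is the displayed description of $N$. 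Finally $\Psi_{1}$ is isometry-valued because $I_{E_{0}}=U(z)^{\ast}U(z)=A(\overline{z})^{\ast}A(\overline{z})+C(z)^{\ast}C(z)$ and $A(\overline{z})^{\ast}A(\overline{z})=\Gamma(\overline{z})^{\ast}\Theta(\overline{z})^{\ast}\Theta(\overline{z})\Gamma(\overline{z})=\Gamma(\overline{z})^{\ast}\Gamma(\overline{z})$, whence $\Psi_{1}(z)^{\ast}\Psi_{1}(z)=I_{E_{0}}$ a.e.

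I expect the routine obstacle to be the bookkeeping among $J$, $J_{1}$, the substitution $z\mapsto\overline{z}$, and the identity $H_{\Phi}^{\ast}=H_{\Phi(\overline{z})^{\ast}}$, where a sign or a conjugation is easily misplaced. The substantive step is the factorization $A=\Theta\Gamma$ through the left inner, right extremal $\Theta$: verifying $R(T_{A})\subset\Theta H_{E_{1}}^{2}$ and then that the induced intertwiner $\Gamma$ is analytic. This is the only place where the extra hypothesis $A\in H^{\infty}$ is genuinely used; without it one only gets the general form $N=\ker(W_{\Psi})\cap(\Theta H_{E_{1}}^{2}\oplus H_{F}^{2})$.
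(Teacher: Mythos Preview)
Your proof is correct and follows essentially the same route as the paper: split the orthogonality against $J_{2}N_{3}$ into the part against $J_{2}UH_{E_{0}}^{2}$ (giving $\ker W_{\Psi}$ exactly as in Theorem~\ref{type1k}) and the part against $J_{2}M_{K}$ (giving $\Theta H_{E_{1}}^{2}$ via Lemma~\ref{rightextremal}), then use Lemma~\ref{condition}(iii) to see $A(z)E_{0}\perp K(\overline{z})$ and hence $R(T_{A})\subset\Theta H_{E_{1}}^{2}$. The only cosmetic difference is how you verify $\Gamma\in H_{B(E_{0},E_{1})}^{\infty}$: the paper simply observes the pointwise identity $\Gamma=\Theta^{\ast}A$, which is bounded and analytic since $\Theta^{\ast}\in L^{\infty}$ and $A\in H^{\infty}$, whereas you argue via the intertwining relation $\Gamma T_{z}=T_{z}\Gamma$; both are standard and equivalent.
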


\begin{proof}
Since $N_{3}=U H_{E_{0}}^{2}\oplus M_{K},$ let $f \in H_{E}^{2}$ and $g \in
H_{F}^{2},$ then $f \oplus g \perp J_{2}N_{3}$ if and only if (i)
$f(\overline{z})\perp M_{K }$ (equivalently $f $ $\perp M_{K(\overline{z})}$)
and (ii) $f(\overline{z})\oplus g \perp U H_{E_{0}}^{2}$.

For (i), $f $ $\perp M_{K(\overline{z})}$. Set $H_{1} =\left\{  h \in
H_{E}^{2}:h \perp M_{K(\overline{z})}\right\}  .$ By previous lemma,
$H_{1}\subset H_{E}^{2}$ is an invariant subspace of $S_{E}$, and when $H_{1}
\neq\{0\}$, there exists a left inner, right extremal function $\Theta$ such
that $H_{1}=\Theta H_{E_{1}}^{2}$ for some $E_{1}\subset E$ and $\Theta
(z)^{\ast}\Theta(z)=I_{E_{1}} $ a.e. $z \in\mathbb{T}$.

For (ii), let $H_{2} =\left\{  f \oplus g \in H_{E}^{2}\oplus H_{F}%
^{2}:f(\overline{z})\oplus g \perp U H_{E_{0}}^{2}\right\}  $. By the proof of
Theorem \ref{type1k}, $H_{2}=\ker(W_{\Psi}),$ where $W_{\Psi}$ is defined in
(\ref{u2}). In conclusion, $N=\ker(W_{\Psi})\cap( \Theta H_{E_{1}}^{2}\oplus
H_{F}^{2})$.

Assume $A \in H^{\infty}.$ By Lemma \ref{condition}, $C \in H^{\infty}$ and
$K(z) \oplus A(\overline{z})E_{0} = E$ a.e. $z \in\mathbb{T}$. Then
$A(z)E_{0}\perp K(\overline{z})$ a.e. $z \in\mathbb{T}$. So for any $e_{0}\in
E_{0},A e_{0}\in H_{1}$. If $H_{1} = \{0\}$, then $A = 0 = \Theta$ and the
conclusion holds. If $H_{1} \neq\{0\}$, then $H_{1}=\Theta H_{E_{1}}^{2}$, and
$A =\Theta\Gamma$ where $\Gamma\in H_{B(E_{0},E_{1})}^{\infty}$. A prior, we
only have $\Gamma e_{0}\in H_{E_{1}}^{2}$ for each $e_{0}\in E_{0},$ then
$\Gamma=\Theta^{\ast}A $ tell us that $\Gamma\in H_{B(E_{0},E_{1})}^{\infty}.$
Note that $U(z)^{\ast}U(z)=I_{E_{0}}$ implies
\begin{equation}
I_{E_{0}}=A(\overline{z})^{\ast}A(\overline{z})+C(z)^{\ast}C(z)=\Gamma
(\overline{z})^{\ast}\Gamma(\overline{z})+C(z)^{\ast}C(z). \label{isome}%
\end{equation}
Let $f \in H_{E_{1}}^{2}$ and $g \in H_{F}^{2}.$ Then $\Theta f\oplus g\in N$
if and only if
\begin{align*}
0  &  =W_{\Psi}( \Theta f\oplus g) =H_{A(\overline{z})}^{\ast}\Theta f+T_{C
}^{\ast}g=H_{A ^{\ast}}\Theta f+T_{C }^{\ast}g\\
&  =H_{\Gamma^{\ast}}f+T_{C }^{\ast}g=W_{\Psi_{1}}( f\oplus g)
\end{align*}
where
\begin{align*}
\label{psionetypeii}W_{\Psi_{1}}=\left[
\begin{array}
[c]{cc}%
H_{\Gamma(\overline{z})}^{\ast} & T_{C }^{\ast}%
\end{array}
\right]  :H_{E_{1}}^{2}\oplus H_{F}^{2}\rightarrow H_{E_{0}}^{2},\Psi
_{1}(z)=\left[
\begin{array}
[c]{c}%
\Gamma(\overline{z})\\
C(z)
\end{array}
\right]  ,
\end{align*}
and by (\ref{isome}) $\Psi_{1} $ is isometry-valued.
\end{proof}

\begin{remark}
When $E$ and $F$ are finite dimensional, if $N$ is a type II invariant
subspace in the above theorem, then by Proposition \ref{type2condition},
$M_{K}=\Omega L_{E_{2}}^{2}$ where $\Omega\in L_{B(E_{2},E)}^{\infty}$ is
isometry-valued. Also $A\in H^{\infty}$ ,$\text{rank} C(z)=\dim E_{0}+\dim
E_{2}-\dim E,$ and $\Theta\in H^{\infty}_{B(E_{1},E)}$ is zero or left inner
satisfying $\Omega^{\ast}(\overline{z}) \Theta=0.$
\end{remark}

In the above theorem if $K=0$, then $\Theta(z)=I_{E},E_{1}=E$ and Theorem
\ref{type2k} reduces to Theorem \ref{type1k}. If $N_{3}=M_{K}$ is such that
$zH_{E}^{2}\subset N_{3}\subset L_{E}^{2}\oplus H_{F}^{2},$ then as we have
noted in the remark above Definition \ref{typeiandii} that $N_{3}=L_{E}^{2}$.
So the corresponding $N=H_{F}^{2}$. If we let $\Psi=0,\Theta=0$, then
$N=\ker(W_{\Psi})\cap(\Theta H_{E_{1}}^{2}\oplus H_{F}^{2})$.

Now we are ready to prove the main Theorem \ref{thm1}. We restate this theorem here.

\begin{theorem}
\label{maink}A closed subspace $N$ of $H_{E}^{2}\oplus H_{F}^{2}$ is an
invariant subspace of $S_{E}\oplus S_{F}^{\ast},$ if and only if
$N=\ker(W_{\Psi})\cap( \Theta H_{E_{1}}^{2}\oplus H_{F}^{2}) $ where $\Psi
\in\Lambda_{E\oplus F}^{\infty}$ is zero or partial isometry-valued with
$\Psi(z)^{\ast}\Psi(z)=I_{E_{0}}$ a.e. $z \in\mathbb{T}$ and $E_{0}\subset
E\oplus F,$ and $\Theta$ is zero or left inner, right extremal with
$E_{1}\subset E.$
\end{theorem}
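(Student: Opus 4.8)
The plan is to prove the two implications of Theorem \ref{maink} separately; almost all of the real labor has already been packaged into the structure results of Sections 2 and 3, so the proof is essentially an assembly.

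\textbf{The ``if'' direction} is short and uses neither the partial-isometry nor the right-extremality hypothesis. For any $\Psi\in\Lambda_{E\oplus F}^{\infty}$, the intertwining identity (\ref{funda2}), $W_{\Psi}X=Y^{\ast}W_{\Psi}$, shows at once that $\ker(W_{\Psi})$ is invariant under $X=S_{E}\oplus S_{F}^{\ast}$: if $W_{\Psi}h=0$ then $W_{\Psi}Xh=Y^{\ast}W_{\Psi}h=0$. Also $\Theta H_{E_{1}}^{2}$ is $S_{E}$-invariant by the Beurling--Lax--Halmos Theorem \ref{BLH} (and equals $\{0\}$ when $\Theta=0$), while $H_{F}^{2}$ is $S_{F}^{\ast}$-invariant, so $\Theta H_{E_{1}}^{2}\oplus H_{F}^{2}$ is one of the splitting invariant subspaces of $X$ recorded after the definition of splitting. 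Since an intersection of two $X$-invariant subspaces is $X$-invariant, $N=\ker(W_{\Psi})\cap(\Theta H_{E_{1}}^{2}\oplus H_{F}^{2})$ is $X$-invariant.

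\textbf{The ``only if'' direction.} Let $N$ be an invariant subspace of $X$. By Theorem \ref{main} there is an invariant subspace $N_{3}$ of $M_{z}\oplus M_{z}$ on $L_{E}^{2}\oplus L_{F}^{2}$ with (\ref{twocond}) holding, i.e. $zH_{E}^{2}\subset N_{3}\subset L_{E}^{2}\oplus H_{F}^{2}$, and $N=(L_{E}^{2}\oplus H_{F}^{2})\ominus J_{2}N_{3}$. By the Helson--Lowdenslager Theorem \ref{Helson} write $N_{3}=UH_{E_{0}}^{2}\oplus M_{K}$ with $U$ zero or isometry-valued and $K$ a measurable range function with $K(z)\perp U(z)E_{0}$. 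Three cases occur. If $U=0$, then $N_{3}=M_{K}$ is doubly invariant and contains $zH_{E}^{2}$, hence contains and therefore equals $L_{E}^{2}$ (the remark just above Definition \ref{typeiandii}); thus $N=H_{F}^{2}$, and $\Psi=0$, $\Theta=0$ works, since $\ker(W_{0})\cap(\{0\}\oplus H_{F}^{2})=H_{F}^{2}$. If $U$ is isometry-valued and $K=0$ (type I), Theorem \ref{type1k} gives $N=\ker(W_{\Psi})$ with $W_{\Psi}$, $\Psi$ as in (\ref{wpsiandpsi}), $\Psi$ isometry-valued; taking $\Theta=I_{E}$ and $E_{1}=E$, which is left inner and, being constant-valued, right extremal, yields $N=\ker(W_{\Psi})\cap(\Theta H_{E_{1}}^{2}\oplus H_{F}^{2})$. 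If $U$ is isometry-valued and $K\neq0$ (type II), write $U_{E}(z)=zA(\overline{z})$ and apply Theorem \ref{type2k}, which gives precisely $N=\ker(W_{\Psi})\cap(\Theta H_{E_{1}}^{2}\oplus H_{F}^{2})$ with $W_{\Psi}$, $\Psi$ as in (\ref{u2}), $\Psi$ isometry-valued, $E_{1}\subset E$, and $\Theta$ zero or left inner and right extremal, the right-extremality being exactly what Lemma \ref{rightextremal} supplies. In each case $\Psi(z)^{\ast}\Psi(z)=I_{E_{0}}$ for some $E_{0}\subset E\oplus F$; passing to the zero-padded extension $\widetilde{\Psi}\in L_{B(E\oplus F)}^{\infty}$ of Remark \ref{wpsivphi} produces a partial isometry-valued symbol in $\Lambda_{E\oplus F}^{\infty}$ with $\ker(W_{\widetilde{\Psi}})=\ker(W_{\Psi})$ on $H_{E}^{2}\oplus H_{F}^{2}$, which is the required form.

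\textbf{Where the difficulty lies.} The genuinely substantive content --- the bijection between $X$-invariant subspaces and the Helson--Lowdenslager subspaces $N_{3}$ of $M_{z}\oplus M_{z}$ satisfying (\ref{twocond}), and the identification of $N$ with $\ker(W_{\Psi})$ in the type I case and with $\ker(W_{\Psi})\cap(\Theta H_{E_{1}}^{2}\oplus H_{F}^{2})$ in the type II case --- is carried entirely by Theorems \ref{main}, \ref{type1k} and \ref{type2k}, already established. For the present theorem the two remaining points needing care are the degenerate case $U=0$, which is not covered by the type I/II theorems since those assume $U$ isometry-valued, and the bookkeeping of Remark \ref{wpsivphi} converting the ``isometry-valued symbol on $E_{0}\subset E\oplus F$'' description into the ``partial isometry-valued symbol in $\Lambda_{E\oplus F}^{\infty}$'' description of the statement without changing the kernel. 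Neither is hard; the work has been front-loaded into the structure theory, and the right-extremality of $\Theta$ --- the feature making the representation as tight as possible --- is precisely why Lemma \ref{rightextremal} was proved.
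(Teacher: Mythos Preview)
Your proposal is correct and follows essentially the same approach as the paper's own proof, which is a two-sentence appeal to Theorem \ref{type2k} together with the preceding discussion covering the degenerate case $U=0$ (yielding $N=H_F^2$, handled by $\Psi=0,\Theta=0$) and the type I case (handled by $\Theta=I_E$). You have simply made the case split and the invocation of Remark \ref{wpsivphi} explicit where the paper leaves them implicit.
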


\begin{proof}
One direction is clear since both $\ker(W_{\Psi})$ and $\Theta H_{E_{1}}%
^{2}\oplus H_{F}^{2}$ are invariant subspace of $S_{E}\oplus S_{F}^{\ast}.$
The other direction follows from Theorem \ref{type2k} and the above discussion.
\end{proof}

Note that if $\Theta_{i} \in H_{B(E_{i},E)}^{\infty}$ with $E_{i}\subset E,$
$i=1,2$ are two left inner functions, then by BLH Theorem $\Theta_{1}H_{E_{1}%
}^{2}\cap\Theta_{2}H_{E_{2}}^{2}=\Theta H_{E_{3}}^{2},$ where $E_{3} \subset
E, \Theta\in H_{B(E_{3},E)}^{\infty}$ is zero or left inner. If $N=\ker
(W_{\Psi_{i}})\cap( \Theta_{i} H_{E_{i}}^{2}\oplus H_{F}^{2}) $ for $i=1,2,$
then $N=\ker(W_{\Psi_{i}})\cap( \Theta H_{E_{3}}^{2}\oplus H_{F}^{2}) $.
Therefore, for two different representations of $N,$ we may assume $\Theta_{1}
=\Theta_{2}.$ This leads to the following uniqueness question for invariant
subspaces of $S_{E}\oplus S_{F}^{\ast}.$ When $\Theta(z) = I_{E}, E_{3} = E$,
the following question reduces to Question \ref{kernelunique}.

\begin{question}
\label{mgeneral}Let $N=\ker(W_{\Psi_{i}})\cap( \Theta H_{E_{3}}^{2}\oplus
H_{F}^{2}) $ where $\Psi_{i}\in L_{B(E_{i},E\oplus F)}^{\infty}$ is
isometry-valued, $E_{i} \subset E\oplus F$ for $i=1,2$, $E_{3} \subset E$ and
$\Theta$ is zero or left inner, what is the relationship between $\Psi_{1}$
and $\Psi_{2}?$
\end{question}

Next we discuss how to express the invariant subspaces of $S_{E}\oplus
S_{F}^{\ast}$ using ranges of $V_{\Phi}.$ We first discuss type I invariant
subspaces where $U$ is unitary-valued.

\begin{theorem}
\label{mainr}Assume $N_{3}=UH_{E_{0}}^{2}$ is such that $zH_{E}^{2}\subset
N_{3}\subset L_{E}^{2}\oplus H_{F}^{2},$ $E_{0}\subset E\oplus F$ and $U \in
L_{B(E_{0},E\oplus F)}^{\infty}$ is unitary-valued. Write%
\[
U(z)=\left[
\begin{array}
[c]{c}%
U_{E}(z)\\
U_{F}(z)
\end{array}
\right]  =\left[
\begin{array}
[c]{c}%
zA(\overline{z})\\
C(z)
\end{array}
\right]  .
\]
Let $N=\left(  L_{E}^{2}\oplus H_{F}^{2}\right)  \ominus J_{2}N_{3}$. Then $N
=R(V_{\Phi})=\ker(W_{\Psi}),$ where%
\begin{align*}
V_{\Phi}  &  =\left[
\begin{array}
[c]{cc}%
T_{A }\\
H_{C(\overline{z})}
\end{array}
\right]  ,\Phi(z)=\left[
\begin{array}
[c]{cc}%
A(z) \\
C(\overline{z})
\end{array}
\right]  ,\\
W_{\Psi}  &  =\left[
\begin{array}
[c]{cc}%
H_{A(\overline{z})}^{\ast} & T_{C }^{\ast}%
\end{array}
\right]  ,\Psi(z)=\left[
\begin{array}
[c]{cc}%
A(\overline{z}) \\
C(z)
\end{array}
\right]  =\Phi(\overline{z}).
\end{align*}
Furthermore, $W_{\Psi}$ and $V_{\Phi}$ are partial isometries. If $\Phi_{1} $
is another unitary-valued function such that
\[
\Phi_{1}(z)=\left[
\begin{array}
[c]{cc}%
A_{1}(z)\\
C_{1}(\overline{z})
\end{array}
\right]
\]
with $A_{1} ,C_{1}\in H^{\infty}$ and $R(V_{\Phi_{1}})=R(V_{\Phi}),$ then
there exists a constant unitary $W\in B(E_{0})$ such that $\Phi_{1}=\Phi W.$
\end{theorem}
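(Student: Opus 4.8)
The plan is to read off the two representations $N=\ker(W_{\Psi})=R(V_{\Phi})$ from results already established, and then to deduce the uniqueness assertion by running the first part of the theorem in reverse and invoking the uniqueness clause of Theorem \ref{Helson}.

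First, since $zH_{E}^{2}\subset N_{3}=UH_{E_{0}}^{2}\subset L_{E}^{2}\oplus H_{F}^{2}$, Proposition \ref{full} gives $U_{F}=C\in H^{\infty}$ and $zU_{E}^{\ast}\in H^{\infty}$; writing $U_{E}(z)=zA(\overline{z})$, the latter condition reads $zU_{E}(z)^{\ast}=A(\overline{z})^{\ast}\in H^{\infty}$, which is equivalent to $A\in H^{\infty}$. I would then apply Theorem \ref{type1k}, with the role of its ``$A$'' played here by $U_{E}(z)=zA(\overline{z})$ (so that its ``$\overline{z}A$'' becomes $A(\overline{z})$), to obtain $N=\ker(W_{\Psi})$ with $W_{\Psi}$ and $\Psi$ exactly as displayed. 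Since $U$ is unitary-valued, Lemma \ref{partial1}(iii) applies (its hypotheses $C,zA^{\ast}\in H^{\infty}$ in that lemma's notation being precisely the conditions $U_{F},zU_{E}^{\ast}\in H^{\infty}$ just verified), and yields simultaneously that $W_{\Psi}$ and $V_{\Phi}$ are partial isometries and that $\ker(W_{\Psi})=R(V_{\Phi})$, where $V_{\Phi}$ is the operator furnished by Lemma \ref{partial1}(ii) from the same $U$. A direct substitution, using $z\,U_{E}(\overline{z})=z\overline{z}A(z)=A(z)$ and $U_{F}(\overline{z})=C(\overline{z})$, identifies that operator with the $V_{\Phi}$ and the symbol $\Phi(z)$ displayed in the statement, and the relation $\Psi(z)=\Phi(\overline{z})$ is immediate from the two formulas.

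For the uniqueness claim, suppose $\Phi_{1}$ is unitary-valued of the stated form, with $A_{1},C_{1}\in H^{\infty}$ and $R(V_{\Phi_{1}})=R(V_{\Phi})$. I would let $U_{1}$ be the function with $E$-component $zA_{1}(\overline{z})$ and $F$-component $C_{1}(z)$, i.e. $U_{1}(z)=\bigl[\begin{smallmatrix}zI_{E}&0\\0&I_{F}\end{smallmatrix}\bigr]\Phi_{1}(\overline{z})$; since $\Phi_{1}(\overline{z})$ is unitary-valued, so is $U_{1}$. Exactly as in the previous paragraph one checks that $zH_{E}^{2}\subset U_{1}H_{E_{1}}^{2}\subset L_{E}^{2}\oplus H_{F}^{2}$: the relation $(U_{1})_{F}=C_{1}\in H^{\infty}$ gives the right inclusion via Lemma \ref{condition}, while $z(U_{1})_{E}(z)^{\ast}=A_{1}(\overline{z})^{\ast}\in H^{\infty}$ gives the left inclusion via Proposition \ref{full}. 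Moreover, from $(U_{1})_{E}(z)=zA_{1}(\overline{z})$ and $(U_{1})_{F}(z)=C_{1}(z)$ one reads off that the symbol attached to $N_{3}':=U_{1}H_{E_{1}}^{2}$ under the recipe of the first part is precisely $\Phi_{1}$, so the already-proved first part yields $R(V_{\Phi_{1}})=(L_{E}^{2}\oplus H_{F}^{2})\ominus J_{2}N_{3}'$. Combining this with $R(V_{\Phi})=(L_{E}^{2}\oplus H_{F}^{2})\ominus J_{2}N_{3}$ and using that $J_{2}$ is unitary, the hypothesis $R(V_{\Phi_{1}})=R(V_{\Phi})$ forces $U_{1}H_{E_{1}}^{2}=UH_{E_{0}}^{2}$.

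Finally, the uniqueness clause of Theorem \ref{Helson} supplies a constant unitary $W$ with $U_{1}=UW$. Comparing the two components gives $A_{1}(\overline{z})=A(\overline{z})W$ and $C_{1}(z)=C(z)W$; replacing $z$ by $\overline{z}$ in the first of these yields $A_{1}(z)=A(z)W$, whence $\Phi_{1}(z)=\Phi(z)W$, as claimed (after identifying the two index spaces, $W\in B(E_{0})$). I expect the first two paragraphs to be routine bookkeeping with earlier results; the one genuinely delicate point is the uniqueness step, where one must observe that the correspondence $N_{3}\leftrightarrow\Phi$ is invertible within this class of symbols and then keep the $z\leftrightarrow\overline{z}$ substitutions straight when transporting Helson's constant unitary back to $A$ and $C$.
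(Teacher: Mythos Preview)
Your proposal is correct and follows essentially the same approach as the paper: invoke Proposition \ref{full} to get $A,C\in H^{\infty}$, then combine Theorem \ref{type1k} with Lemma \ref{partial1}(iii) for the kernel/range description and the partial-isometry claim, and for uniqueness pass back from $\Phi_{1}$ to the corresponding $U_{1}$ and apply the uniqueness clause of Theorem \ref{Helson} (this is exactly ``the same argument as in the proof of Corollary \ref{maink1}'' to which the paper refers). Your write-up simply spells out the bookkeeping that the paper leaves implicit.
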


\begin{proof}
By Proposition \ref{full}, we have $A, C \in H^{\infty}$. Then by Lemma
\ref{partial1} and Theorem \ref{type1k}, we have $N =R(V_{\Phi})=\ker(W_{\Psi
}),$ and $W_{\Psi}$ and $V_{\Phi}$ are partial isometries.

The formula $\Phi_{1} =\Phi W$ is the same as $\Psi_{1} =\Phi_{1}(\overline
{z})=\Phi(\overline{z})W=\Psi_{1} W$ which follows from the same argument as
in the proof of Corollary \ref{maink1}.
\end{proof}

In the more general case $N_{3}=U H_{E_{0}}^{2},$ where $U $ is not
unitary-valued, we have the following result. Note that by the remark after
Proposition \ref{full}, if $zH_{E}^{2}\subset N_{3} = U H_{E_{0}}^{2}$, then
$zU_{E}^{*} \in H^{\infty}$.

\begin{proposition}
Let $N_{3}=UH_{E_{0}}^{2}$ be such that $zH_{E}^{2}\subset N_{3}\subset
L_{E}^{2}\oplus H_{F}^{2}$, where $U \in L_{B(E_{0},E\oplus F)}^{\infty}$ is
isometry-valued, $E_{0} \subset E \oplus F$. Write%
\begin{equation}
U(z)=\left[
\begin{array}
[c]{c}%
U_{E}(z)\\
U_{F}(z)
\end{array}
\right]  =\left[
\begin{array}
[c]{c}%
zA(\overline{z})\\
C(z)
\end{array}
\right]  :E_{0}\rightarrow E\oplus F. \label{u3}%
\end{equation}
Let $N=\left(  L_{E}^{2}\oplus H_{F}^{2}\right)  \ominus J_{2}N_{3}$. Then
$N\supset R(V_{\Phi}),$ where%
\[
V_{\Phi}=\left[
\begin{array}
[c]{c}%
T_{A}\\
H_{C(\overline{z})}%
\end{array}
\right]  ,\Phi(z)=\left[
\begin{array}
[c]{c}%
A(z)\\
C(\overline{z})
\end{array}
\right]  .
\]
Furthermore, if $U$ is complementary isometry-valued, then by Proposition
\ref{ab1}, there exists $\Omega$ such that $\left[
\begin{array}
[c]{cc}%
U & \Omega
\end{array}
\right]  $ is unitary-valued and
\begin{equation}
\left[
\begin{array}
[c]{cc}%
U(z) & \Omega(z)
\end{array}
\right]  =\left[
\begin{array}
[c]{cc}%
zA(\overline{z}) & 0\\
C(z) & \Omega_{F}(z)
\end{array}
\right]  :E_{0}\oplus E_{0}^{\perp}\rightarrow E\oplus F, \label{u4}%
\end{equation}
where $A,C\in H^{\infty}.$ In this case, $N\supset R(V_{\widehat{\Phi}}),$
where%
\[
V_{\widehat{\Phi}}=\left[
\begin{array}
[c]{cc}%
T_{A} & 0\\
H_{C(\overline{z})} & H_{\Omega_{F}(\overline{z})}%
\end{array}
\right]  ,\widehat{\Phi}(z)=\left[
\begin{array}
[c]{cc}%
A(z) & 0\\
C(\overline{z}) & \Omega_{F}(\overline{z})
\end{array}
\right]  .
\]

\end{proposition}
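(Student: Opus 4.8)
The plan is to show directly that $R(V_{\Phi})\subset N=(L_{E}^{2}\oplus H_{F}^{2})\ominus J_{2}N_{3}$, and then to deduce the statement about $V_{\widehat\Phi}$ from the same computation. As in the proof of Theorem \ref{type1k} (via Lemma \ref{condition} and the remark after Proposition \ref{full}) we have $A,C\in H^{\infty}$: $C=U_{F}\in H^{\infty}$, while $zU_{E}^{\ast}\in H^{\infty}$ reads $A(\overline{z})^{\ast}\in H^{\infty}$, i.e. $A\in H^{\infty}$. Hence $T_{A}=M_{A}|_{H_{E_{0}}^{2}}$, $C(\overline{z})\in L^{\infty}$, and $V_{\Phi}$ maps $H_{E_{0}}^{2}$ into $H_{E}^{2}\oplus H_{F}^{2}\subset L_{E}^{2}\oplus H_{F}^{2}$; so to get $V_{\Phi}h\in N$ it only remains to check $V_{\Phi}h\perp J_{2}N_{3}$ for every $h\in H_{E_{0}}^{2}$. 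Since $U$ is isometry-valued, $N_{3}=\{Ug:g\in H_{E_{0}}^{2}\}$, hence $J_{2}N_{3}=\{J_{2}Ug:g\in H_{E_{0}}^{2}\}$, and it suffices to prove $\langle V_{\Phi}h,J_{2}Ug\rangle=0$ for all $g,h\in H_{E_{0}}^{2}$.

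The crux is this orthogonality identity. Using $J_{1}f(z)=f(\overline{z})$ together with $U_{E}(z)=zA(\overline{z})$ and $U_{F}(z)=C(z)$, one has $J_{2}Ug=\overline{z}A(z)g(\overline{z})\oplus C(z)g(z)$. Expanding $H_{C(\overline{z})}h=JQ[C(\overline{z})h]$ and using $J^{\ast}=J$ and the fact that $J$ carries $H_{F}^{2}$ onto $\overline{zH_{F}^{2}}$ (so that $Q$ may be dropped when pairing against $J(Cg)$), the two summands of $\langle V_{\Phi}h,J_{2}Ug\rangle$ become
\[
\int_{\mathbb{T}}z\,\langle A(z)^{\ast}A(z)h(z),g(\overline{z})\rangle\,\frac{|dz|}{2\pi}\qquad\text{and}\qquad\int_{\mathbb{T}}z\,\langle C(\overline{z})^{\ast}C(\overline{z})h(z),g(\overline{z})\rangle\,\frac{|dz|}{2\pi}.
\]
Substituting $z\mapsto\overline{z}$ in $U(z)^{\ast}U(z)=I_{E_{0}}$ gives $A(z)^{\ast}A(z)+C(\overline{z})^{\ast}C(\overline{z})=I_{E_{0}}$ a.e. on $\mathbb{T}$; adding the two integrals then collapses the bracket to $I_{E_{0}}$ and leaves $\int_{\mathbb{T}}z\,\langle h(z),g(\overline{z})\rangle\,\frac{|dz|}{2\pi}=\langle zh,g(\overline{z})\rangle_{L_{E_{0}}^{2}}$, which is $0$ because $zh$ has Fourier support in $\{1,2,\dots\}$ while $g(\overline{z})$ has Fourier support in $\{0,-1,-2,\dots\}$. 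Thus $R(V_{\Phi})\subset N$. (Equivalently, Theorem \ref{type1k} gives $N=\ker(W_{\Psi})$ for $W_{\Psi}=\bigl[\,H_{A(\overline{z})}^{\ast}\ \ T_{C}^{\ast}\,\bigr]$, and this computation is precisely the verification $W_{\Psi}V_{\Phi}=0$.)

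For the complementary isometry-valued case, Proposition \ref{ab1}(i) supplies $\Omega$ with $\Omega_{E}=0$ making $[\,U\ \Omega\,]$ unitary-valued as in (\ref{u4}). For $h_{0}\in H_{E_{0}}^{2}$ and $h_{1}\in H_{E_{0}^{\perp}}^{2}$ one has $V_{\widehat\Phi}(h_{0}\oplus h_{1})=V_{\Phi}h_{0}+\bigl(0\oplus H_{\Omega_{F}(\overline{z})}h_{1}\bigr)$. The first summand lies in $N$ by the previous step, and $0\oplus H_{\Omega_{F}(\overline{z})}h_{1}\in\{0\}\oplus H_{F}^{2}\subset L_{E}^{2}\oplus H_{F}^{2}$, so it is enough to check $0\oplus H_{\Omega_{F}(\overline{z})}h_{1}\perp J_{2}N_{3}$. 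Repeating the manipulation above, $\langle 0\oplus H_{\Omega_{F}(\overline{z})}h_{1},\,J_{2}Ug\rangle$ reduces to $\int_{\mathbb{T}}z\,\langle C(\overline{z})^{\ast}\Omega_{F}(\overline{z})h_{1}(z),g(\overline{z})\rangle\,\frac{|dz|}{2\pi}$; and the vanishing of the $(1,2)$-block $U(z)^{\ast}\Omega(z)=0$ of the unitary-valued $[\,U\ \Omega\,]$ gives, since $\Omega_{E}=0$, that $C(z)^{\ast}\Omega_{F}(z)=0$ a.e., hence $C(\overline{z})^{\ast}\Omega_{F}(\overline{z})=0$ a.e., so that integral vanishes. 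Therefore $R(V_{\widehat\Phi})\subset N$.

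I do not expect a real obstacle: once $J_{2}N_{3}$ is written out the whole argument is a pair of orthogonality computations. The two points needing care are (i) the substitutions $z\leftrightarrow\overline{z}$ — in particular one must use the isometry relation in the form $A(z)^{\ast}A(z)+C(\overline{z})^{\ast}C(\overline{z})=I_{E_{0}}$, coming from $U(\overline{z})^{\ast}U(\overline{z})=I_{E_{0}}$, rather than the bare relation $U(z)^{\ast}U(z)=I_{E_{0}}$ — and (ii) the Fourier-support bookkeeping that yields $\langle zh,g(\overline{z})\rangle_{L^{2}}=0$ and its analogue in the complementary case; both are routine.
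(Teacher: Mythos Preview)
Your proof is correct and follows essentially the same route as the paper: a direct verification that $\langle V_{\Phi}h, J_{2}Ug\rangle=0$ (resp.\ $\langle V_{\widehat\Phi}(h_{0}\oplus h_{1}), J_{2}Ug\rangle=0$) using the isometry relation $U^{\ast}U=I_{E_{0}}$ (resp.\ the unitarity of $[\,U\ \Omega\,]$). The only differences are organizational: the paper handles both parts at once (recovering the first by setting $\Omega=0$) and packages the cancellation abstractly as $\langle M_{U}Jh, M_{U}k\rangle=\langle Jh,k\rangle=0$, whereas you expand the pointwise identity $A(z)^{\ast}A(z)+C(\overline{z})^{\ast}C(\overline{z})=I_{E_{0}}$ and argue via Fourier support---but this is the same computation unwound.
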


\begin{proof}
If $U(z)$ has the form (\ref{u3}), then $C \in H^{\infty}$, and $zU_{E}^{*}
\in H^{\infty}$. So $A, C \in H^{\infty}$.

We prove the case $\left[
\begin{array}
[c]{cc}%
U & \Omega
\end{array}
\right]  ,$ the case without $\Omega$ is a modification by setting $\Omega=0.$
Let $h_{1}\in H_{E_{0}}^{2},h_{2}\in H_{E_{0}^{\perp}}^{2}.$ Set
\[
\left[
\begin{array}
[c]{c}%
f\\
g
\end{array}
\right]  =V_{\widehat{\Phi}}\left[
\begin{array}
[c]{c}%
h_{1}\\
h_{2}%
\end{array}
\right]  =\left[
\begin{array}
[c]{cc}%
T_{A} & 0\\
H_{C(\overline{z})} & H_{\Omega_{F}(\overline{z})}%
\end{array}
\right]  \left[
\begin{array}
[c]{c}%
h_{1}\\
h_{2}%
\end{array}
\right]  .
\]
Since $A\in H^{\infty},$
\[
\left[
\begin{array}
[c]{c}%
f\\
g
\end{array}
\right]  =\left[
\begin{array}
[c]{c}%
P\left[  Ah_{1}\right] \\
PJ\left[  C(\overline{z})h_{1}+\Omega_{F}(\overline{z})h_{2}\right]
\end{array}
\right]  =\left[
\begin{array}
[c]{c}%
Ah_{1}\\
P(CJh_{1}+\Omega_{F}Jh_{2})
\end{array}
\right]  .
\]
Therefore, for $k\in H_{E_{0}}^{2},$ $0\in H_{E_{0}^{\perp}}^{2}$%
\begin{align*}
&  \left\langle \left[
\begin{array}
[c]{c}%
f\\
g
\end{array}
\right]  ,J_{2}Uk\right\rangle =\left\langle \left[
\begin{array}
[c]{c}%
f\\
g
\end{array}
\right]  ,J_{2}\left[
\begin{array}
[c]{cc}%
U & \Omega
\end{array}
\right]  \left[
\begin{array}
[c]{c}%
k\\
0
\end{array}
\right]  \right\rangle \\
&  =\left\langle f(\overline{z}),zA(\overline{z})k\right\rangle +\left\langle
g,Ck\right\rangle \\
&  =\left\langle A(\overline{z})h_{1}(\overline{z}),zA(\overline
{z})k\right\rangle +\left\langle CJh_{1}+\Omega_{F}Jh_{2},Ck\right\rangle \\
&  =\left\langle \left[
\begin{array}
[c]{cc}%
U & \Omega
\end{array}
\right]  \left[
\begin{array}
[c]{c}%
Jh_{1}\\
Jh_{2}%
\end{array}
\right]  ,\left[
\begin{array}
[c]{cc}%
U & \Omega
\end{array}
\right]  \left[
\begin{array}
[c]{c}%
k\\
0
\end{array}
\right]  \right\rangle \\
&  =\left\langle \left[
\begin{array}
[c]{c}%
Jh_{1}\\
Jh_{2}%
\end{array}
\right]  ,\left[
\begin{array}
[c]{c}%
k\\
0
\end{array}
\right]  \right\rangle =0,
\end{align*}
where the third equality follows from $C\in H^{\infty},$ the second to the
last equality follows from the assumption that $\left[
\begin{array}
[c]{cc}%
U & \Omega
\end{array}
\right]  $ is unitary-valued.
\end{proof}

Let $\Psi(z) = \Phi(\overline{z})$, $W_{\Psi} =\left[
\begin{array}
[c]{cc}%
H_{A(\overline{z})}^{\ast} & T_{C }^{\ast}%
\end{array}
\right] $. Then by Theorem \ref{type1k}, $N = \ker(W_{\Psi})$. One can also
verify directly that $W_{\Psi} V_{\Phi} = 0$ and $W_{\Psi}V_{\widehat{\Phi}} =
0$.

We remark that by Proposition \ref{ab1}, when $E$ and $F$ are finite
dimensional, there exists $\Omega$ such that $\left[
\begin{array}
[c]{cc}%
U & \Omega
\end{array}
\right]  $ is unitary-valued and (\ref{u4}) holds. In fact, in this case there
exist infinitely many such $\Omega$ since $\Omega_{F}$ can be replaced by
$\Omega_{F}\Theta$ where $\Theta\in L^{\infty}_{B(E_{0}^{\perp})}$ is any
unitary-valued functions. By the above proposition, it seems difficult to
express a type I invariant subspace $N$ of $S_{E}\oplus S_{F}^{\ast}$ as
$R(V_{\Phi})$ since $N\supset R(V_{\widehat{\Phi}})$ for infinitely many
$V_{\widehat{\Phi}}.$ However, by taking "adjoint" of Theorem \ref{maink}, we
have the following result.

\begin{lemma}
\label{mainkr}A closed subspace $M$ of $H_{E}^{2}\oplus H_{F}^{2}$ is an
invariant subspace of $S_{E}^{\ast}\oplus S_{F},$ if and only if $M$ is
$Span\left\{  R(W_{\Psi}^{\ast}),K_{\Theta}\right\}  $ (the closed linear span
of $R(W_{\Psi}^{\ast})$ and $K_{\Theta}$) where $\Psi\in\Lambda_{E\oplus
F}^{\infty}$ is zero or partial isometry-valued, $\Theta\in H^{\infty
}_{B(E_{1},E)}$ is zero or left inner, right extremal, and $K_{\Theta}=
H^{2}_{E} \ominus\Theta H^{2}_{E_{1}}$.
\end{lemma}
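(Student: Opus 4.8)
The plan is to obtain Lemma \ref{mainkr} from Theorem \ref{maink} by the duality/"adjoint" trick already announced in the introduction, namely the fact that $S_E^{\ast}\oplus S_F$ is the adjoint of $S_E\oplus S_F^{\ast}$ on $H_E^2\oplus H_F^2$. A closed subspace $M$ is invariant for $S_E^{\ast}\oplus S_F$ if and only if $M^{\perp}$ (the orthogonal complement inside $H_E^2\oplus H_F^2$) is invariant for $(S_E^{\ast}\oplus S_F)^{\ast}=S_E\oplus S_F^{\ast}$. So the first step is to apply Theorem \ref{maink} to $N:=M^{\perp}$: this gives $\Psi\in\Lambda_{E\oplus F}^{\infty}$ zero or partial isometry-valued and $\Theta$ zero or left inner and right extremal with $E_1\subset E$ such that
\[
M^{\perp}=\ker(W_{\Psi})\cap\bigl(\Theta H_{E_1}^2\oplus H_F^2\bigr).
\]
Taking complements, $M=\overline{R(W_{\Psi}^{\ast})}+\bigl(\Theta H_{E_1}^2\oplus H_F^2\bigr)^{\perp}$ (closed linear span), since $(\,\ker A\cap V\,)^{\perp}=\overline{R(A^{\ast})+V^{\perp}}$ for any bounded $A$ and closed subspace $V$.

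**Identifying the pieces.** The next step is to recognize the two summands. Because $\Theta\in H^{\infty}_{B(E_1,E)}$ is left inner, $\Theta H_{E_1}^2$ is $S_E$-invariant in $H_E^2$, hence $\Theta H_{E_1}^2\oplus H_F^2$ is $S_E\oplus S_F^{\ast}$-invariant, and its orthogonal complement in $H_E^2\oplus H_F^2$ is $(H_E^2\ominus\Theta H_{E_1}^2)\oplus\{0\}=K_{\Theta}\oplus\{0\}$, which we identify with $K_{\Theta}$ as stated (with the convention $K_{\Theta}=H^2_E$ when $\Theta=0$). For the other summand: from \eqref{funda2} we have $W_{\Psi}X=Y^{\ast}W_{\Psi}$, equivalently $W_{\Psi}^{\ast}Y=X^{\ast}W_{\Psi}^{\ast}$ where $X^{\ast}=S_E^{\ast}\oplus S_F$; hence $R(W_{\Psi}^{\ast})$ and its closure are invariant for $S_E^{\ast}\oplus S_F$, so both summands appearing in the span are $(S_E^{\ast}\oplus S_F)$-invariant, which is consistent. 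Since $\Psi$ may be taken zero or partial isometry-valued, this produces exactly the asserted form of $M$.

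**The converse direction.** For the reverse implication, suppose $M=\operatorname{Span}\{R(W_{\Psi}^{\ast}),K_{\Theta}\}$ with $\Psi$ zero or partial isometry-valued and $\Theta$ zero or left inner, right extremal. Then $M^{\perp}=\overline{R(W_{\Psi}^{\ast})}^{\perp}\cap K_{\Theta}^{\perp}=\ker(W_{\Psi})\cap\bigl(\Theta H_{E_1}^2\oplus H_F^2\bigr)$, which by the "if" direction of Theorem \ref{maink} is an invariant subspace of $S_E\oplus S_F^{\ast}$; therefore $M=(M^{\perp})^{\perp}$ is invariant for its adjoint $S_E^{\ast}\oplus S_F$. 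This closes the equivalence.

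**Main obstacle.** The substantive points are purely bookkeeping: (a) checking that the subspace-arithmetic identity $(\ker A\cap V)^{\perp}=\overline{R(A^{\ast})+V^{\perp}}$ is applied with the right Hilbert space (the ambient $H_E^2\oplus H_F^2$, not a codomain), and (b) making sure that when $\Theta$ and $\Psi$ are replaced by $0$ the conventions $K_0=H^2_E$ and $W_0=0$ reproduce the degenerate invariant subspaces correctly. There is no real analytic difficulty here — the whole content has already been placed in Theorem \ref{maink} — so the "hard part" is only to phrase the adjoint correspondence cleanly and to confirm that the class of symbols $\{\Psi,\Theta\}$ allowed in Theorem \ref{maink} matches verbatim the class claimed in Lemma \ref{mainkr}, which it does.
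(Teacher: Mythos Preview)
Your proof is correct and follows exactly the same approach as the paper: pass to the orthogonal complement $N=M^{\perp}$, apply Theorem \ref{maink}, and take complements using $(\ker W_{\Psi}\cap V)^{\perp}=\operatorname{Span}\{R(W_{\Psi}^{\ast}),V^{\perp}\}$. The paper's proof is terser (it omits your explicit converse argument and the bookkeeping about degenerate cases), but the substance is identical.
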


\begin{proof}
Note that $M$ is an invariant subspace of $S_{E}^{\ast}\oplus S_{F}$ if and
only if $N:=\left(  H_{E}^{2}\oplus H_{F}^{2}\right)  \ominus M$ is an
invariant subspace of $S_{E}\oplus S_{F}^{\ast}.$ By Theorem \ref{maink},
$N=\ker(W_{\Psi})\cap( \Theta H_{E_{1}}^{2}\oplus H_{F}^{2}) .$ Therefore,%
\[
M=Span\left\{  \ker(W_{\Psi}) ^{\perp},( \Theta H_{E_{1}}^{2}\oplus H_{F}^{2})
^{\perp}\right\}  =Span\left\{  R(W_{\Psi}^{\ast}),K_{\Theta}\right\}  .
\]

\end{proof}

Now we can prove the main Theorem \ref{thm2}. We restate this theorem here.

\begin{theorem}
\label{range} A closed subspace $N$ of $H_{E}^{2}\oplus H_{F}^{2}$ is an
invariant subspace of $S_{E}\oplus S_{F}^{*},$ if and only if $N=Span\left\{
R(V_{\Phi}),K_{\Theta}\right\}  $ where $\Phi\in\Gamma_{E\oplus F}^{\infty}$
is zero or partial isometry-valued with $\Phi(z)^{\ast}\Phi(z)=I_{E_{0}}$ a.e.
$z \in\mathbb{T}$ and $E_{0}\subset E\oplus F,$ and $\Theta\in H^{\infty
}_{B(F_{1},F)}$ is zero or left inner, right extremal.
\end{theorem}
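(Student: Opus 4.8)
The plan is to deduce this from Lemma~\ref{mainkr} (equivalently, from the already‑proved Theorem~\ref{maink}), applied with the roles of $E$ and $F$ interchanged and transported through the coordinate‑flip unitary. The ``if'' direction is the routine half and is essentially contained in Section~2: if $\Phi\in\Gamma_{E\oplus F}^{\infty}$ then by (\ref{fundament}) we have $XV_{\Phi}=V_{\Phi}Y$ with $X=S_{E}\oplus S_{F}^{\ast}$, so $X R(V_{\Phi})\subset R(V_{\Phi})$ and hence $X\overline{R(V_{\Phi})}\subset\overline{R(V_{\Phi})}$; moreover $K_{\Theta}=H_{F}^{2}\ominus\Theta H_{F_{1}}^{2}$, viewed as a subspace of $\{0\}\oplus H_{F}^{2}$, is a model space, hence invariant under the restriction $0\oplus S_{F}^{\ast}$ of $X$. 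Since $X$ is bounded, $X$ maps the closed linear span $Span\{R(V_{\Phi}),K_{\Theta}\}$ into itself, so this subspace is invariant for every admissible $\Phi,\Theta$.

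For the ``only if'' direction, let $P\colon H_{E}^{2}\oplus H_{F}^{2}\to H_{F}^{2}\oplus H_{E}^{2}$ be the unitary $P(f\oplus g)=g\oplus f$. A direct check gives $P(S_{E}\oplus S_{F}^{\ast})P^{\ast}=S_{F}^{\ast}\oplus S_{E}$ on $H_{F}^{2}\oplus H_{E}^{2}$, so $N$ is invariant under $S_{E}\oplus S_{F}^{\ast}$ if and only if $PN$ is invariant under $S_{F}^{\ast}\oplus S_{E}$. But $S_{F}^{\ast}\oplus S_{E}$ on $H_{F}^{2}\oplus H_{E}^{2}$ is precisely the operator ``$S_{E}^{\ast}\oplus S_{F}$'' with $E$ and $F$ interchanged, and Lemma~\ref{mainkr} holds for arbitrary separable Hilbert spaces; applying it in that form yields
\[
PN=Span\{R(W_{\widehat{\Psi}}^{\ast}),K_{\Theta}\},
\]
where $\widehat{\Psi}\in\Lambda_{F\oplus E}^{\infty}$ is zero or partial isometry‑valued, $\Theta\in H_{B(F_{1},F)}^{\infty}$ with $F_{1}\subset F$ is zero or left inner and right extremal, and $K_{\Theta}=H_{F}^{2}\ominus\Theta H_{F_{1}}^{2}$ sits in the $H_{F}^{2}$ summand of $H_{F}^{2}\oplus H_{E}^{2}$.

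It remains to pull this back by $P^{\ast}$ and recognize the right‑hand side in the form required. Since $P$ is unitary, $R(P^{\ast}W_{\widehat{\Psi}}^{\ast})=R(P^{\ast}W_{\widehat{\Psi}}^{\ast}P)$, where $P^{\ast}W_{\widehat{\Psi}}^{\ast}P$ acts on $H_{E}^{2}\oplus H_{F}^{2}$. Writing $\widehat{\Psi}$ in its $F\oplus E$ block form and unwinding the definitions, $W_{\widehat{\Psi}}^{\ast}=\left[\begin{smallmatrix}H_{\widehat{C}}&H_{\widehat{D}}\\ T_{\widehat{A}}&T_{\widehat{B}}\end{smallmatrix}\right]$ on $H_{F}^{2}\oplus H_{E}^{2}$ with $\widehat{A}\in H_{B(F,E)}^{\infty}$, $\widehat{B}\in H_{B(E)}^{\infty}$, $\widehat{C}\in L_{B(F)}^{\infty}$, $\widehat{D}\in L_{B(E,F)}^{\infty}$, and a two‑by‑two block computation with $P$ gives
\[
P^{\ast}W_{\widehat{\Psi}}^{\ast}P=\left[\begin{array}{cc}T_{\widehat{B}}&T_{\widehat{A}}\\ H_{\widehat{D}}&H_{\widehat{C}}\end{array}\right]=V_{\Phi},\qquad \Phi(z)=\left[\begin{array}{cc}\widehat{B}(z)&\widehat{A}(z)\\ \widehat{D}(z)&\widehat{C}(z)\end{array}\right]\in\Gamma_{E\oplus F}^{\infty}.
\]
Here $\Phi(z)$ is obtained from $\widehat{\Psi}(z)$ by interchanging its two block rows and its two block columns, i.e. $\Phi(z)=\sigma_{1}\widehat{\Psi}(z)\sigma_{2}$ for constant flip unitaries $\sigma_{1},\sigma_{2}$ on $E\oplus F$; hence $\Phi$ is zero or partial isometry‑valued whenever $\widehat{\Psi}$ is, with $\Phi(z)^{\ast}\Phi(z)=I_{E_{0}}$ for a suitable $E_{0}\subset E\oplus F$. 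Finally $P^{\ast}$ carries the copy of $K_{\Theta}$ in the $H_{F}^{2}$ summand of $H_{F}^{2}\oplus H_{E}^{2}$ onto the copy of $K_{\Theta}$ in the $H_{F}^{2}$ summand of $H_{E}^{2}\oplus H_{F}^{2}$, so applying $P^{\ast}$ to the displayed identity for $PN$ gives $N=Span\{R(V_{\Phi}),K_{\Theta}\}$. The degenerate cases $\widehat{\Psi}=0$ or $\Theta=0$ pass through unchanged.

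The only genuinely delicate point is the block bookkeeping in the displayed identification of $P^{\ast}W_{\widehat{\Psi}}^{\ast}P$ with a $V_{\Phi}$: one must keep careful track of which of $E$ and $F$ is the domain and which the codomain of each entry of $\widehat{\Psi}$, and verify that conjugation by the flip trades the ``$W$‑pattern'' (Hankel entries on top, Toeplitz on the bottom) for the ``$V$‑pattern'' (Toeplitz on top, Hankel on the bottom), in accordance with the remark ``$V_{\Phi}$ is essentially $W_{\Psi}^{\ast}$'' following (\ref{defvfphi}). Everything else is formal.
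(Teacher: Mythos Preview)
Your proof is correct and follows essentially the same route as the paper's own argument: both deduce the ``only if'' direction from Lemma~\ref{mainkr} by conjugating with the coordinate-flip unitary $H_{E}^{2}\oplus H_{F}^{2}\to H_{F}^{2}\oplus H_{E}^{2}$, applying the lemma with the roles of $E$ and $F$ interchanged, and then performing the $2\times2$ block computation that turns $W_{\widehat{\Psi}}^{\ast}$ into a $V_{\Phi}$ (the paper writes this as $UR(W_{\Psi}^{\ast})=R(V_{\Phi})$ with $\Phi(z)=U_{0}^{\ast}\Psi(z)U_{0}$). Your block bookkeeping and the treatment of $K_{\Theta}$ match the paper's, and your handling of the ``if'' direction via~(\ref{fundament}) is the same observation the paper makes.
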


\begin{proof}
Since $\overline{R(V_{\Phi})}$ and $K_{\Theta}$ are invariant subspaces of
$S_{E}\oplus S_{F}^{*},$ we only need to show that the invariant subspaces of
$S_{E}\oplus S_{F}^{*}$ can be expressed by $Span\left\{  R(V_{\Phi
}),K_{\Theta}\right\}  $.

Let $U_{0}:E\oplus F\rightarrow F\oplus E$ be the unitary operator defined by
\[
U_{0}(e\oplus f)=f\oplus e,e\in E,f\in F.
\]
We can extend $U_{0}$ to the unitary operator $U:H_{E}^{2}\oplus H_{F}%
^{2}\rightarrow H_{F}^{2}\oplus H_{E}^{2}$ defined by
\[
U(g\oplus h)=h\oplus g,g\in H_{E}^{2},h\in H_{F}^{2}.
\]
We can also write $U=T_{U_{0}},$ where $U_{0}$ denote the constant
operator-valued function. Then $S_{E}^{\ast}\oplus S_{F}=U^{\ast}(S_{F}\oplus
S_{E}^{\ast})U.$ Thus $M_{2}$ is an invariant subspace of $S_{F}\oplus
S_{E}^{\ast}$ if and only if $M_{2}=UM_{1}$ where $M_{1}$ is an invariant
subspace of $S_{E}^{\ast}\oplus S_{F}.$ By the previous lemma, $M_{1}%
=Span\left\{  R(W_{\Psi}^{\ast}),K_{\Theta}\right\}  ,$ where $\Psi= 0$ or
\[
W_{\Psi}=\left[
\begin{array}
[c]{cc}%
H_{C }^{\ast} & T_{A }^{\ast}\\
H_{D }^{\ast} & T_{B }^{\ast}%
\end{array}
\right]  ,\Psi(z)=\left[
\begin{array}
[c]{cc}%
C(z) & D(z)\\
A(z) & B(z)
\end{array}
\right]  :E\oplus F\rightarrow E\oplus F,
\]
with $A ,B \in H^{\infty},$ $C ,D \in L^{\infty}$, $\Psi(z)^{\ast}%
\Psi(z)=I_{E_{0}}$ a.e. $z \in\mathbb{T}$ and $E_{0}\subset E\oplus F.$ Then%
\[
W_{\Psi}^{\ast}=\left[
\begin{array}
[c]{cc}%
H_{C } & H_{D }\\
T_{A } & T_{B }%
\end{array}
\right]
\]
and $UR(W_{\Psi}^{\ast})=R(V_{\Phi})$ where $\Phi\in\Gamma_{F\oplus E}%
^{\infty}$%
\[
V_{\Phi}=\left[
\begin{array}
[c]{cc}%
T_{B } & T_{A }\\
H_{D } & H_{C }%
\end{array}
\right]  ,\Phi(z)=\left[
\begin{array}
[c]{cc}%
B(z) & A(z)\\
D(z) & C(z)
\end{array}
\right]  :F\oplus E\rightarrow F\oplus E
\]
and $\Phi(z)=U_{0}^{\ast}\Psi(z)U_{0},$ $\Phi(z)^{\ast}\Phi(z)=U_{0}^{\ast
}\Psi(z)^{\ast}\Psi(z)U_{0}=I_{E_{1}},$ where $E_{1}=U_{0}^{\ast}E_{0}.$ Here
there is an abuse of the notation $V_{\Phi}$ as an operator on $H_{F}%
^{2}\oplus H_{E}^{2}$ instead of its definition as an operator on $H_{E}%
^{2}\oplus H_{F}^{2}.$

Similarly, since $K_{\Theta}\subset H_{E}^{2}\oplus\left\{  0\right\}  ,$
$UK_{\Theta}\subset\left\{  0\right\}  \oplus H_{E}^{2}$ and $UK_{\Theta
}=\left\{  0\right\}  \oplus K_{\Theta}.$ The result now follows by
interchanging $E$ and $F.$
\end{proof}

To answer the uniqueness question for representing invariant subspaces of
$S_{E}\oplus S_{F}^{\ast}$ as $Span$ $\left\{  R(V_{\Phi}),K_{\Theta}\right\}
,$ we state the following question.

\begin{question}
Let $N=Span\left\{  R(V_{\Phi_{i}}),K_{\Theta}\right\}  $ where $\Phi_{i}\in
L_{B(E_{i},E\oplus F)}^{\infty}$ is isometry-valued, $E_{i} \subset E \oplus
F$ for $i=1,2$ and $\Theta\in H^{\infty}_{B(F_{1},F)}$ is zero or left inner,
$F_{1} \subset F$, what is the relationship between $\Phi_{1}$ and $\Phi_{2}?$
\end{question}

\section{Applications}

When $\dim E = 1, \dim F < \infty$ or $\dim E<\infty, \dim F =1$, we do a more
detailed analysis to show more precise pictures of invariant subspaces of
$S_{E}\oplus S_{F}^{\ast}.$ This analysis also demonstrates that there are
still operator and function theoretic questions concerning the fine structures
of invariant subspaces $S_{E}\oplus S_{F}^{\ast}$ in general. We use lower
case letters $a, b, \ldots$ to denote scalar functions. We first discuss the
case $\dim E=\dim F=1$ which gives a different proof of Timotin's theorem.

\

\begin{proof}
[Proof of Theorem \ref{Timotinthm}]Assume $\dim E=\dim F=1.$ Let $N$ be an
invariant subspace of $S_{E}\oplus S_{F}^{\ast}.$ There are three cases.

\textbf{Case 1} $N$ is type II. By Corollary \ref{costant1} $N_{3} \supset
L^{2}_{E}$ and $N = \{0\} \oplus H^{2}_{F}$, or $N = \{0\} \oplus K_{\theta}$,
where $K_{\theta}= H^{2}_{F} \ominus\theta H^{2}_{F}$, $\theta$ inner
function. In this case, $N$ is splitting.

\textbf{Case 2} $N$ is type I with $N=\left(  L_{E}^{2}\oplus H_{F}%
^{2}\right)  \ominus J_{2}N_{3},$ where $N_{3}=UH_{E}^{2}.$ Then%
\[
U(z)=\left[
\begin{array}
[c]{c}%
za(\overline{z})\\
c(z)
\end{array}
\right]  ,a,c\in H^{\infty}.
\]
By Proposition \ref{ab1}, $\text{rank}\left[  c(z)\right]  =\dim E - \dim
E=0.$ So $c =0$ and $a $ is an inner function. Thus $N$ is splitting. In fact,
$N_{3} = za(\overline{z})H^{2}_{E}$ and $N = aH^{2}_{E} \oplus H^{2}_{F}$. Or
one applies Theorem \ref{type1k} to conclude this.

\textbf{Case 3} $N$ is type I with $N=\left(  L_{E}^{2}\oplus H_{F}%
^{2}\right)  \ominus J_{2}N_{3},$ where $N_{3}=U H_{E\oplus F}^{2}$ and $U $
is unitary-valued with
\[
U(z) = \left[
\begin{array}
[c]{cc}%
za(\overline{z}) & zb(\overline{z})\\
c(z) & d(z)
\end{array}
\right]  , a,b,c,d \in H^{\infty}%
\]
By Theorem \ref{mainr}, $N=R(V_{\Phi_{1}})$ where $\Phi_{1}$ is defined in
(\ref{v1}). Note that if $\Phi_{1}$ is given by (\ref{v1}) with $a,b,c,d \in
H^{\infty}$ and $\Phi_{1}$ is unitary-valued, then $V_{\Phi_{1}}$ is a partial
isometry and $R(V_{\Phi_{1}})$ is an invariant subspace of $S_{E} \oplus
S_{F}^{*}$.

The splitting invariant subspaces of $S_{E}\oplus S_{F}^{*}$ are of the form
$N = \theta_{1}H^{2}_{E} \oplus K_{\theta_{2}}$, $\theta_{1}, \theta_{2}$
inner functions or zero. If $\theta_{1} = 0$, then it is Case 1. If
$\theta_{1} \neq0, \theta_{2} = 0$, then it is Case 2. So $N$ is splitting in
Case 3 if and only if $N = \theta_{1}H^{2}_{E} \oplus K_{\theta_{2}}$ with
$\theta_{1}, \theta_{2}$ inner functions. Note that $N = \theta_{1}H^{2}_{E}
\oplus K_{\theta_{2}}$, $\theta_{1}, \theta_{2}$ inner functions, if and only
if $N_{3} = U H_{E\oplus F}^{2}$ with
\[
U(z) = \left[
\begin{array}
[c]{cc}%
za(\overline{z}) & zb(\overline{z})\\
c(z) & d(z)
\end{array}
\right]  = \left[
\begin{array}
[c]{cc}%
z\theta_{1}(\overline{z}) & 0\\
0 & \theta_{2}(z)
\end{array}
\right]  W,
\]
where $W$ is a constant unitary on $E\oplus F = \mathbb{C}^{2}$. Thus $N$ is
splitting in this case if and only if $a$ and $b$ are proportional, i.e. there
exists $(\alpha, \beta) \in\mathbb{C}^{2} \backslash\{(0,0)\}$ such that
$\alpha a + \beta b =0$.

The representation of non-splitting $N=R(V_{\Phi_{1}})$ is unique in the
following way. If there is another $\Phi_{1}^{\prime}$ such that
$N=R(V_{\Phi_{1}^{\prime}}),$ where $\Phi_{1}^{\prime}$ satisfies the same
condition of $\Phi_{1}$, then there exists a $2\times2$ constant unitary
matrix $W$ such that $\Phi_{1}^{\prime} =\Phi_{1} W.$ The proof is complete.
\end{proof}

The above theorem identifies all the non-splitting invariant subspaces of
$S_{E} \oplus S_{F}^{*}$ when $\dim E = \dim F =1$. In general we may ask
whether there exists non-splitting invariant subspace of $S_{E} \oplus
S_{F}^{*}$. The following are examples modified from \cite[Example 7.3]%
{CRoss}. When $E$ and$\ F$ are finite dimensional, let%
\begin{align}\label{modifiedexamp}
N=\left\{  \left[
\begin{array}
[c]{cccccc}%
f & \cdots & f & f(0) & \cdots & f(0)
\end{array}
\right]  ^{tr}:f\in H^{2}\right\}  ,
\end{align}
where the first $m =\dim E$ entries are $f$, the last $n=\dim F$ entries are
$f(0)$, then $N$ is a non-splitting invariant subspace. When $E$ and$\ F$ are
infinite dimensional, let%
\[
N=\left\{  \left[
\begin{array}
[c]{cccccccc}%
f & f /2 & f /2^{2} & \cdots & f(0) & f(0)/2 & f(0)/2^{2} & \cdots
\end{array}
\right]  ^{tr}:f\in H^{2}\right\}  ,
\]
then $N$ is a non-splitting invariant subspace.

The following result provides a positive answer for Question \ref{invaiantk}
in the case $\dim E=1$. When $\dim E=1$, it is known that there exists $a\in
L^{\infty}$ such that $\ker(H_{a})=\left\{  0\right\}  .$ In fact, write
$a(z)=f(z)+\overline{zg(z)},$ where $f,g\in H^{2},$ then $\ker(H_{a}%
)=\left\{  0\right\}  $ if and only if $Span\left\{  H_{\overline
{a(\overline{z})}}z^{n}:n\geq0\right\}  =Span\left\{  S_{E}^{\ast n}%
g:n\geq 0\right\}  =H^{2},$ if and only if
$g$ is a cyclic vector of $S_{E}^{\ast}$.
The cyclic vectors of $S_E^*$ are described analytically in \cite{DSS70}.
For example, let
$\overline{a}(z)=\sum_{j=1}^{\infty}\frac{c_{j}}{\overline{\lambda_{j}}}%
\frac{1}{1-\overline{\lambda_{j}}z}$, where $\{\lambda_{j}\}\subset\mathbb{D}$
is distinct and $\lambda_{j}\rightarrow0$, and $c_{j}>0$ are chosen to satisfy
$\sum_{j=1}^{\infty}\left\vert \frac{c_{j}}{\lambda_{j}}\right\vert \frac
{1}{1-|\lambda_{j}|}<\infty$, then $\ker(H_{a})=\left\{  0\right\}  $ (or see
the argument in \cite[Theorem 2.5]{LR15}).

\begin{theorem}
\label{typeiikernel} Assume $\dim E=1$. Let $N$ be an invariant subspace of
$S_{E}\oplus S_{F}^{\ast}$. Then $N=\ker(W_{\Psi})$ for some $\Psi\in
\Lambda_{E\oplus F}^{\infty}.$ If Furthermore, $\dim F < \infty$ and $N$ is a
type I invariant subspace, then there is a partial isometry $W_{\Psi}$ such
that $N=\ker(W_{\Psi})$.
\end{theorem}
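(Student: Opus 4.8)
The plan is to split invariant subspaces $N$ of $S_{E}\oplus S_{F}^{\ast}$ into the type I and type II cases of Definition \ref{typeiandii} and to treat each with the machinery of Section 3. For type I the result is essentially already available: the corresponding $N_{3}=UH_{E_{0}}^{2}$ has $U$ isometry-valued (necessarily $U\neq 0$, since $zH_{E}^{2}\subset N_{3}$), so Theorem \ref{type1k} gives directly $N=\ker(W_{\Psi})$ for the isometry-valued $\Psi$ exhibited there (built from the two blocks of $U$), and after the padding of Remark \ref{wpsivphi} this $\Psi$ lies in $\Lambda_{E\oplus F}^{\infty}$. When in addition $\dim F<\infty$, both $E$ and $F$ are finite dimensional, so by the remark following Proposition \ref{ab1} the isometry-valued $U$ is complementary isometry-valued, and then the last sentence of Theorem \ref{type1k} says $W_{\Psi}$ is a partial isometry. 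This already proves the final assertion of the theorem.

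The substance is the type II case, where $\dim E=1$, so Corollary \ref{costant1} applies: the corresponding $N_{3}$ contains $L_{E}^{2}$, and $N$ equals $\{0\}\oplus H_{F}^{2}$ or $\{0\}\oplus K_{\Lambda}$ with $K_{\Lambda}=H_{F}^{2}\ominus\Lambda H_{F_{1}}^{2}$, $F_{1}\subset F$ and $\Lambda\in H_{B(F_{1},F)}^{\infty}$ left inner. Allowing $\Lambda=0$ with the convention $K_{0}=H_{F}^{2}$, in all type II cases $N=\{0\}\oplus K_{\Lambda}$, so it remains only to realize each such $N$ as $\ker(W_{\Psi})$.

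I would do this with a diagonal symbol. Choose a scalar $C\in L^{\infty}$ whose Hankel operator $H_{C}^{\ast}$ has trivial kernel; such $C$ exists by the discussion immediately preceding the theorem (equivalently, a $C$ for which the anti-analytic part of $C(\overline{z})^{\ast}$ is a cyclic vector of $S_{E}^{\ast}$). Let $B\in H_{B(F)}^{\infty}$ be $\Lambda$ extended by $0$ on $F\ominus F_{1}$ (and $B=0$ when $\Lambda=0$), and set
\[
\Psi(z)=\left[\begin{array}{cc} C(z) & 0\\ 0 & B(z)\end{array}\right]:E\oplus F\to E\oplus F,\qquad W_{\Psi}=\left[\begin{array}{cc} H_{C}^{\ast} & 0\\ 0 & T_{B}^{\ast}\end{array}\right].
\]
Then $\Psi\in\Lambda_{E\oplus F}^{\infty}$, and by (\ref{funda2}) $\ker(W_{\Psi})$ is $S_{E}\oplus S_{F}^{\ast}$-invariant. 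Because $\ker H_{C}^{\ast}=\{0\}$, a vector $f\oplus g$ lies in $\ker(W_{\Psi})$ if and only if $f=0$ and $T_{B}^{\ast}g=0$; and a routine computation shows $\ker T_{B}^{\ast}=H_{F}^{2}\ominus BH_{F}^{2}=H_{F}^{2}\ominus\Lambda H_{F_{1}}^{2}=K_{\Lambda}$, the range $BH_{F}^{2}=\Lambda H_{F_{1}}^{2}$ being closed since $B(z)$ is a partial isometry for a.e.\ $z\in\mathbb{T}$. Hence $\ker(W_{\Psi})=\{0\}\oplus K_{\Lambda}=N$, and combining with the type I case completes the proof.

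The only non-formal ingredient is the existence of a scalar $L^{\infty}$ symbol whose Hankel operator has trivial kernel — precisely the fact recalled before the theorem, and the reason one does not expect a partial isometry $W_{\Psi}$ in the type II case. Everything else is bookkeeping: matching the diagonal $W_{\Psi}$ to the template of $\Lambda_{E\oplus F}^{\infty}$ through Remark \ref{wpsivphi}, and the identification $\ker T_{B}^{\ast}=K_{\Lambda}$, both of which are straightforward.
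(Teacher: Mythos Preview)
Your proposal is correct and follows essentially the same approach as the paper: split into type I and type II, invoke Theorem \ref{type1k} (together with the complementary isometry-valued property in finite dimensions) for type I, and for type II use Corollary \ref{costant1} to reduce to $\{0\}\oplus K_{\Lambda}$ and then build a diagonal symbol with a trivially-kerneled scalar Hankel block and the extended inner function as the analytic block. The only differences are notational (your $C,B$ are the paper's $a,C$) and that the paper cites Corollary \ref{maink1} rather than the last line of Theorem \ref{type1k} for the partial isometry assertion, which amounts to the same thing.
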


\begin{proof}
If $N$ is type I, then by Theorem \ref{type1k} $N=\ker(W_{\Psi})$ for some
isometry-valued function $\Psi$. If also $\dim F < \infty$, then Corollary
\ref{maink1} implies that we can take $W_{\Psi}$ to be a partial isometry so
that $N=\ker(W_{\Psi})$.

If $N$ is type II, since $\dim E =1$, Corollary \ref{costant1} implies that $N
= \{0\} \oplus M$ with $M = H^{2}_{F}$ or $H^{2}_{F} \ominus\Theta
H^{2}_{F_{1}}$ ($F_{1} \subset F, \Theta$ left inner) an invariant subspace of
$S_{F}^{*}$ on $H^{2}_{F}$. So $N$ is splitting. Let $\overline{a} \in
H^{\infty}$ be such that $\ker(H_{a}^{\ast})=\left\{  0\right\}  $. If $M =
H^{2}_{F}$, let $C = 0$, if $H^{2}_{F} \ominus\Theta H^{2}_{F_{1}}$, extend
$\Theta$ to be zero on $H^{2}_{F_{1}^{\perp}}$ and let $C = \Theta$, then
$\ker(T_{C}^{*}) = M$. Now let
\[
W_{\Psi}=\left[
\begin{array}
[c]{ccc}%
H_{a}^{\ast} & 0  \\
0 & T_{C}^{\ast}
\end{array}
\right]  ,\Psi(z)=\left[
\begin{array}
[c]{ccc}%
a(z) & 0 \\
0 & C(z)
\end{array}
\right]  .
\]
Then $N=\ker(W_{\Psi})$.
\end{proof}

If $\dim E=1,\dim F<\infty$, and $N$ is a non-splitting invariant subspace,
then from the above proof, we see that $N$ is type I and there is a partial
isometry $W_{\Psi}$ such that $N=\ker(W_{\Psi})$.

\

Now we consider the case $\dim E < \infty, \dim F =1$. We need the following lemma.

\begin{lemma}
\label{rangetypeii} Assume $E$ and $F$ are finite dimensional. Let
$N_{3}=UH_{E_{0}}^{2}\oplus\Omega L_{E_{2}}^{2}$ be such that $zH_{E}%
^{2}\subset N_{3}\subset L_{E}^{2}\oplus H_{F}^{2},$ where $E_{0}\perp
E_{2},E_{0}\oplus E_{2}=E\oplus F$, $U\in L_{B(E_{0},E\oplus F)}^{\infty
},\Omega\in L_{B(E_{2},E)}^{\infty}$ are isometry-valued. Let $N=\left(
L_{E}^{2}\oplus H_{F}^{2}\right)  \ominus J_{2}N_{3}$. Then there is a partial
isometry $V_{\Phi}$ such that $N=R(V_{\Phi})$.
\end{lemma}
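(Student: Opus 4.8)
The plan is to show that this ``full'' type II case reduces, after extracting a left inner factor, to the full type I case already settled by Theorem \ref{mainr}.

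First I would collect the structural facts. Since $\Omega L_{E_2}^2$ equals $M_K$ for the range function $K(z)=\Omega(z)E_2$ (because $\Omega(z)$ is an isometry onto $K(z)$), and $K$ takes values in $E$, we are in the setting of Theorem \ref{type2k} with $N_3=UH_{E_0}^2\oplus M_K$. By Theorem \ref{Helson}, $K(z)\perp U(z)E_0$ a.e.; since in addition $E_0\oplus E_2=E\oplus F$, the function $[U(z),\Omega(z)]:E_0\oplus E_2\to E\oplus F$ is an isometry between spaces of equal finite dimension, hence unitary-valued, and the $F$-component of $\Omega$ vanishes. By Proposition \ref{type2condition} and the remark following Theorem \ref{type2k}, the hypothesis $zH_E^2\subset N_3\subset L_E^2\oplus H_F^2$ yields $C:=U_F\in H^\infty$ and $U_E(z)=zA(\bar z)$ with $A\in H^\infty$; comparing the $(F,F)$-entry of $[U,\Omega][U,\Omega]^\ast=I_{E\oplus F}$ and using $\Omega_F=0$ gives the key identity $C(z)C(z)^\ast=U_F(z)U_F(z)^\ast=I_F$ a.e. Finally Theorem \ref{type2k} provides a (possibly zero) left inner, right extremal $\Theta\in H^\infty_{B(E_1,E)}$ with $A=\Theta\Gamma$, $\Gamma\in H^\infty_{B(E_0,E_1)}$, and
\[
N=(T_\Theta\oplus I_{H_F^2})\,\ker(W_{\Psi_1}),\qquad
\Psi_1(z)=\left[\begin{array}{c}\Gamma(\bar z)\\ C(z)\end{array}\right]\in L^\infty_{B(E_0,E_1\oplus F)}\ \text{isometry-valued.}
\]

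Next I would check that the type I datum $U_1(z)=\left[\begin{array}{c} z\Gamma(\bar z)\\ C(z)\end{array}\right]:E_0\to E_1\oplus F$ is \emph{unitary}-valued, not merely isometry-valued; this is the step I expect to be the main obstacle. Isometry-valuedness is immediate from $U_1(z)^\ast U_1(z)=\Gamma(\bar z)^\ast\Gamma(\bar z)+C(z)^\ast C(z)=\Psi_1(z)^\ast\Psi_1(z)=I_{E_0}$, so it suffices to verify $\dim E_0=\dim E_1+\dim F$. Here Lemma \ref{condition}(iii) gives $K(z)\oplus U_E(z)E_0=E$, so $\text{rank}\,A(z)=\text{rank}\,U_E(z)=\dim E-\dim E_2$; as $\Theta$ is left inner, hence pointwise injective, $A=\Theta\Gamma$ forces $\text{rank}\,\Gamma(z)=\dim E-\dim E_2$, and together with the a priori bound $\dim E_1\le\dim E-\dim E_2$ from the argument in Lemma \ref{costant} this gives $\dim E_1=\dim E-\dim E_2$. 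Since $\dim E_0=\dim E+\dim F-\dim E_2$, we obtain $\dim E_0=\dim E_1+\dim F$, so $U_1$ is unitary-valued. Moreover the $F$-component of $U_1$ is $C\in H^\infty$ and $z$ times the adjoint of its $E_1$-component is $\Gamma(\bar z)^\ast\in H^\infty$, so by Proposition \ref{full} the space $U_1H_{E_0}^2$ satisfies $zH_{E_1}^2\subset U_1H_{E_0}^2\subset L_{E_1}^2\oplus H_F^2$.

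Now I would invoke Theorem \ref{mainr} with $E$ replaced by $E_1$: since $U_1$ is unitary-valued of the displayed form, the corresponding invariant subspace of $S_{E_1}\oplus S_F^\ast$ is $\ker(W_{\Psi_1})=R(V_{\Phi_1})$, where $V_{\Phi_1}=\left[\begin{array}{c} T_\Gamma\\ H_{C(\bar z)}\end{array}\right]$ is a partial isometry. Hence, using $T_\Theta T_\Gamma=T_{\Theta\Gamma}=T_A$,
\[
N=(T_\Theta\oplus I_{H_F^2})\,R(V_{\Phi_1})=R\left((T_\Theta\oplus I_{H_F^2})V_{\Phi_1}\right)=R(V_\Phi),\qquad
V_\Phi=\left[\begin{array}{c} T_A\\ H_{C(\bar z)}\end{array}\right].
\]
Because $\Theta$ is left inner, $T_\Theta\oplus I$ is an isometry, and the composition of an isometry with a partial isometry is a partial isometry; alternatively, the identity $C(z)C(z)^\ast=I_F$ established above lets one quote Lemma \ref{partial1}(ii) directly to conclude that $V_\Phi$ is a partial isometry. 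The degenerate cases ($\Theta=0$, equivalently $E_1=\{0\}$, and/or $\Gamma=0$) are handled by the same formulas with the convention $T_0=0$, so they go through verbatim. Once $U_1$ is known to be unitary-valued everything else is routine, which is why I single out that dimension count as the crux.
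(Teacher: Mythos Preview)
Your proposal is correct and follows essentially the same route as the paper's proof: you establish $C(z)C(z)^\ast=I_F$ from the unitarity of $[U,\Omega]$, invoke Theorem~\ref{type2k} to factor $A=\Theta\Gamma$ and express $N$ via $\ker(W_{\Psi_1})$, verify that $\Psi_1$ (equivalently your $U_1$) is unitary-valued by the rank/dimension count $\dim E_1=\dim E-\dim E_2$, and then apply Theorem~\ref{mainr}/Lemma~\ref{partial1} to get $N=R(V_\Phi)$ with $V_\Phi$ a partial isometry. The only cosmetic differences are that the paper treats the case $\Theta=0$ separately rather than by degeneration, and derives the bound $\dim E_1\le\dim E-\dim E_2$ directly from $\Omega^\ast(\bar z)\Theta(z)=0$ (stated in the remark after Theorem~\ref{type2k}) rather than by pointing back to Lemma~\ref{costant}.
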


\begin{proof}
Write
\[
U(z)=\left[
\begin{array}
[c]{c}%
U_{E}(z)\\
U_{F}(z)
\end{array}
\right]  =\left[
\begin{array}
[c]{c}%
zA(\overline{z})\\
C(z)
\end{array}
\right]  :E_{0}\rightarrow E\oplus F.
\]
Since $E$ and $F$ are finite dimensional, $E_{0}\oplus E_{2}=E\oplus F$, by
Proposition \ref{type2condition}, $\left[
\begin{array}
[c]{cc}%
U & \Omega
\end{array}
\right]  $ is unitary-valued, $A,C\in H^{\infty}$ and $\text{rank}C(z)=\dim
F$. Note that $\Omega\in L_{B(E_{2},E)}^{\infty}$, we thus have
$C(z)C(z)^{\ast}=I_{F}$ a.e. $z\in\mathbb{T}$. By Theorem \ref{type2k} and the
remark after that, $N=\ker(W_{\Psi})\cap(\Theta H_{E_{1}}^{2}\oplus H_{F}%
^{2})$, where $\Psi$ is given by (\ref{u2}), $E_{1}\subset E$, and $\Theta\in
H_{B(E_{1},E)}^{\infty}$ is zero or left inner such that $\Omega^{\ast}%
\Theta=0.$ Also $A=\Theta\Gamma$ for some $\Gamma\in H_{B(E_{0},E_{1}%
)}^{\infty}$.

If $\Theta=0$, then $A=0$. Lemma \ref{condition} (iii) then implies that
$E_{2}=E,$ $\Omega L_{E_{2}}^{2}=L_{E}^{2}$. So $\dim E_{0}=F$, $C$ is
unitary-valued and $N_{3}=CH_{E_{0}}^{2}\oplus L_{E}^{2}$. Thus $N=H_{F}%
^{2}\ominus CH_{E_{0}}^{2}$. Let
\[
V_{\Phi}=\left[
\begin{array}
[c]{ccc}%
0 & 0  \\
H_{C(\overline{z})} & 0
\end{array}
\right]  ,\Phi(z)=\left[
\begin{array}
[c]{ccc}%
0 & 0  \\
C(\overline{z}) & 0
\end{array}
\right]  .
\]
By Lemma \ref{partial1} $V_{\Phi}$ is a partial isometry and $N=R(V_{\Phi})$.

If $\Theta\neq0$, then $A=\Theta\Gamma$ for some $\Gamma\in H_{B(E_{0},E_{1}%
)}^{\infty}$. By Theorem \ref{type2k},
\[
N=\left\{  \left[
\begin{array}
[c]{c}%
\Theta f\\
g
\end{array}
\right]  \in H_{E}^{2}\oplus H_{F}^{2}:\left[
\begin{array}
[c]{c}%
f\\
g
\end{array}
\right]  \in\ker(W_{\Psi_{1}})\subset H_{E_{1}}^{2}\oplus H_{F}^{2}\right\}
,
\]
where $\Psi_{1}\in L_{B(E_{0},E_{1}\oplus F)}^{\infty}$ is isometry-valued
with
\[
\Psi_{1}(z)=\left[
\begin{array}
[c]{c}%
\Gamma(\overline{z})\\
C(z)
\end{array}
\right]  :E_{0}\rightarrow E_{1}\oplus F.
\]
Now we show $\Psi_{1}$ is unitary-valued. Since $\text{rank}C(z)=\dim F$, it
is enough to show $\text{rank}\Gamma(z)\geq\dim E_{1}$. Since $\left[
\begin{array}
[c]{cc}%
U & \Omega
\end{array}
\right]  $ is unitary-valued, we have $\text{rank}A(z)=\dim E-\dim E_{2}$. By
$A=\Theta\Gamma$, we obtain $\text{rank}\Gamma(z)\geq\text{rank}A(z)=\dim
E-\dim E_{2}$. Note that $\Omega^{\ast}(\overline{z})\Theta(z)=0$, we have $\dim
E_{1}\leq\dim E-\dim E_{2}$. Thus $\text{rank}\Gamma(z)\geq\dim E_{1}$ and
$\Psi_{1}$ is unitary-valued. Let $\Phi_{1}(z)=\Psi_{1}(\overline{z})$, then
Theorem \ref{mainr} or Lemma \ref{partial1} implies that $\ker(W_{\Psi_{1}%
})=R(V_{\Phi_{1}})$. Note that
\[
R(V_{\Phi_{1}})=\left[
\begin{array}
[c]{c}%
T_{\Gamma}\\
H_{C(\overline{z})}%
\end{array}
\right]  H_{E_{0}}^{2}=\left\{  \left[
\begin{array}
[c]{c}%
\Gamma k\\
H_{C(\overline{z})}k
\end{array}
\right]  :k\in H_{E_{0}}^{2}\right\}  .
\]
Then
\[
N=\left\{  \left[
\begin{array}
[c]{c}%
\Theta\Gamma k\\
H_{C(\overline{z})}k
\end{array}
\right]  :k\in H_{E_{0}}^{2}\right\}  =\left\{  \left[
\begin{array}
[c]{c}%
Ak\\
H_{C(\overline{z})}k
\end{array}
\right]  :k\in H_{E_{0}}^{2}\right\}  .
\]
Let
\[
V_{\Phi}=\left[
\begin{array}
[c]{cc}%
T_{A} & 0\\
H_{C(\overline{z})} & 0
\end{array}
\right]  ,\Phi(z)=\left[
\begin{array}
[c]{cc}%
A(z) & 0\\
C(\overline{z}) & 0
\end{array}
\right]  :E_{0}\oplus E_{0}^{\perp}\rightarrow E\oplus F.
\]
Then $N=R(V_{\Phi})$, and by Lemma \ref{partial1} $V_{\Phi}$ is a partial isometry.
\end{proof}

Now we are ready to prove the following version of Theorem \ref{mandonerange2}
which is related to Question \ref{invaiantr} in the case $\dim E<\infty, \dim
F=1.$

\begin{theorem}
\label{mandonerange} Assume $\dim E=m<\infty$ and $\dim F=1.$ Let $N$ be an
invariant subspace of $S_{E}\oplus S_{F}^{\ast}$. Then $N=\overline{R(V_{\Phi
})}$ for some $\Phi\in\Gamma_{E\oplus F}^{\infty}$. Furthermore, if $N$ is a
non-splitting invariant subspace of $S_{E}\oplus S_{F}^{\ast}$, then there is
a partial isometry $V_{\Phi}$ such that $N=R(V_{\Phi})$.
\end{theorem}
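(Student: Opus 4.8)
The plan is to feed $N$ through the reduction of Theorem~\ref{main}: write $N=\left(L_E^2\oplus H_F^2\right)\ominus J_2N_3$ where $N_3$ is an invariant subspace of $M_z\oplus M_z$ on $L_E^2\oplus L_F^2$ with $zH_E^2\subseteq N_3\subseteq L_E^2\oplus H_F^2$, and then, by Theorem~\ref{Helson} and Lemma~\ref{condition}, write $N_3=UH_{E_0}^2\oplus M_K$ with $E_0\subseteq E\oplus F$, $U$ zero or isometry-valued, $M_K\subseteq L_E^2$, $U_F\in H^\infty$, and $K(z)\oplus U_E(z)E_0=E$ for a.e.\ $z\in\mathbb T$. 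Since $\dim E=m<\infty$ and $\dim F=1$: if $K\ne 0$, Proposition~\ref{type2condition}(i) gives $M_K=\Omega L_{E_2}^2$ with $E_2\subseteq E\oplus F$, $E_2\perp E_0$, $\Omega\in L^\infty_{B(E_2,E)}$ isometry-valued, and $\dim E_0+\dim E_2\ge m$; if $K=0$, then $zH_E^2\subseteq UH_{E_0}^2$ forces $\dim E_0\ge m$. Because $\dim(E\oplus F)=m+1$, in the first case $\dim E_0+\dim E_2\in\{m,m+1\}$ and in the second $\dim E_0\in\{m,m+1\}$, and I would split the argument into these four cases.

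\emph{Full cases.} If $K=0$ and $\dim E_0=m+1$, then $U$ is unitary-valued and Theorem~\ref{mainr} directly gives $N=R(V_\Phi)$ with $V_\Phi$ a partial isometry. If $K\ne 0$ and $\dim E_0+\dim E_2=m+1$, then $E_0\oplus E_2=E\oplus F$ (and, since $\Omega\colon E_2\to E$ is isometry-valued, $\dim E_2\le m$, so $\dim E_0\ge 1$ and $U\ne 0$), and Lemma~\ref{rangetypeii} applies to give $N=R(V_\Phi)$ with $V_\Phi$ a partial isometry.

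\emph{Degenerate cases.} If $K=0$ and $\dim E_0=m$, then by Proposition~\ref{ab1}(ii) $\text{rank}\,U_F(z)=\dim E_0-\dim E=0$, so $U_F=0$; hence $U=U_E$ is isometry-valued from the $m$-dimensional $E_0$ into $E$, i.e.\ unitary-valued onto $E$, and $N_3=U_EH_{E_0}^2\subseteq L_E^2$. If $K\ne 0$ and $\dim E_0+\dim E_2=m$, then by Proposition~\ref{type2condition} $\text{rank}\,U_F(z)=\dim E_0+\dim E_2-\dim E=0$, so again $U_F=0$, and then $\begin{bmatrix}U&\Omega\end{bmatrix}=\begin{bmatrix}U_E&\Omega\end{bmatrix}$ is isometry-valued from the $m$-dimensional $E_0\oplus E_2$ into $E$, hence unitary-valued onto $E$, so $N_3=U_EH_{E_0}^2\oplus\Omega L_{E_2}^2\subseteq L_E^2$. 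In either degenerate case $J_2N_3\subseteq L_E^2\oplus\{0\}$, whence $N=M'\oplus H_F^2$ for an $S_E$-invariant subspace $M'\subseteq H_E^2$; by the Beurling-Lax-Halmos theorem (Theorem~\ref{BLH}) $M'=\Theta H_{E_1}^2$ for a left inner $\Theta$ (with the conventions $\Theta=0$ when $M'=\{0\}$ and $\Theta=I_E$, $E_1=E$ when $M'=H_E^2$), so $N=\Theta H_{E_1}^2\oplus H_F^2$, which is one of the splitting subspaces listed after the definition of splitting.

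It remains to package the degenerate cases as $N=\overline{R(V_\Phi)}$ and to collect the implications. In a degenerate case I would take $V_\Phi$ to be the block-diagonal operator $\begin{bmatrix}T_\Theta&0\\0&H_d\end{bmatrix}$, where $\Theta$ is extended by zero off $E_1$ so that $\Theta\in H^\infty_{B(E)}$ and $R(T_\Theta)=\Theta H_{E_1}^2$, and $d\in L^\infty$ is a scalar symbol (here $\dim F=1$) chosen so that $H_d$ has dense range; such a $d$ exists since, as recalled just before Theorem~\ref{typeiikernel}, there is $a\in L^\infty$ with $\ker H_a=\{0\}$, and then $H_a^\ast=H_{\overline{a(\overline{z})}}$ has dense range. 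Then $\overline{R(V_\Phi)}=\Theta H_{E_1}^2\oplus H_F^2=N$ and $\Phi=\begin{bmatrix}\Theta&0\\0&d\end{bmatrix}\in\Gamma_{E\oplus F}^\infty$; together with the two full cases (where even $N=R(V_\Phi)$) this gives $N=\overline{R(V_\Phi)}$ for every invariant subspace $N$. Finally, since the degenerate cases yield only splitting subspaces, a non-splitting $N$ must fall under one of the two full cases, where $N=R(V_\Phi)$ with $V_\Phi$ a partial isometry; this is the ``furthermore'' statement. The step I expect to be the main obstacle is the bookkeeping in the degenerate cases: reading off from the rank identities of Propositions~\ref{ab1} and~\ref{type2condition} that $U_F=0$, and then concluding $N_3\subseteq L_E^2$, which is exactly what collapses $N$ into the splitting form $M'\oplus H_F^2$ — as well as verifying that the dimension dichotomy ($m$ versus $m+1$) genuinely exhausts all possibilities.
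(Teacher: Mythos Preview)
Your proof is correct and follows essentially the same approach as the paper's: the paper also splits into the type~I and type~II cases and then further by dimension ($\dim E_0 = m$ versus $m+1$ in type~I; $\dim E_2 = m-n$ versus $m-n+1$ in type~II with $n = \dim E_0$), invoking Theorem~\ref{mainr} and Lemma~\ref{rangetypeii} in the full-dimensional cases and observing $U_F = 0$ (hence $N_3 \subset L_E^2$, hence $N$ splitting of the form $M'\oplus H_F^2$) in the degenerate ones, with the same Hankel-with-dense-range trick for the splitting cases. Your organization by the single parameter $\dim E_0 + \dim E_2$ collapses the paper's three type~II subcases into two and is marginally cleaner, but the substance is identical.
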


\begin{proof}
We split the proof into the following cases.

\textbf{Case 1} $N$ is type I. Assume $N$ is type I with $N=\left(  L_{E}%
^{2}\oplus H_{F}^{2}\right)  \ominus J_{2}N_{3},$ where $N_{3}=UH_{E_{0}}^{2}$
and $U \in L^{\infty}_{B(E_{0},E\oplus F)}$ is isometry-valued. Write
\begin{equation}
\label{dimmandone}U(z)=\left[
\begin{array}
[c]{c}%
U_{E}(z)\\
U_{F}(z)
\end{array}
\right]  =\left[
\begin{array}
[c]{c}%
zA(\overline{z})\\
C(z)
\end{array}
\right]  :E_{0} \rightarrow E \oplus F, \Phi(z)=\left[
\begin{array}
[c]{c}%
A(z)\\
C(\overline{z})
\end{array}
\right]  .
\end{equation}
Then $A, C \in H^{\infty}$ and $\dim E_{0} \geq\dim E$. We may suppose $E
\subset E_{0}$, and we have the following two subcases.

\textbf{Case 1a} $N$ is type I with $N=\left(  L_{E}^{2}\oplus H_{F}%
^{2}\right)  \ominus J_{2}N_{3},$ where $N_{3}=UH_{E_{0}}^{2}, E_{0} = E\oplus
F$ and $U $ is unitary-valued. By Theorem \ref{mainr}, $V_{\Phi}$ is a partial
isometry and $N=R(V_{\Phi})$, where $\Phi$ is given by (\ref{dimmandone}).

\textbf{Case 1b} $N$ is type I with $N=\left(  L_{E}^{2}\oplus H_{F}%
^{2}\right)  \ominus J_{2}N_{3},$ where $N_{3}=UH_{E}^{2}$. By Proposition
\ref{ab1}, $C=0.$ Thus $A$ is unitary-valued, $N_{3} = zA(\overline{z}%
)H^{2}_{E}$, and $N =AH^{2}_{E} \oplus H^{2}_{F}$ is splitting. Let
$\overline{\alpha} \in H^{\infty}$ be such that the closure of $R(H_{\alpha})$
is $H_{F}^{2}$, and let
\[
V_{\Phi}=\left[
\begin{array}
[c]{ccc}%
T_{A} & 0  \\
0 & H_{\alpha}
\end{array}
\right]  , \Phi(z)=\left[
\begin{array}
[c]{ccc}%
A(z) & 0  \\
0 & \alpha(z)
\end{array}
\right]  ,
\]
then $N=\overline{R(V_{\Phi})}$.

\textbf{Case 2} $N$ is type II. Assume $N$ is type II with $N=\left(
L_{E}^{2}\oplus H_{F}^{2}\right)  \ominus J_{2}N_{3},$ where, by Proposition
\ref{type2condition}, $N_{3}=UH_{E_{0}}^{2}\oplus\Omega L_{E_{2}}^{2}$ with
$E_{2}\perp E_{0},\Omega\in L_{B(E_{2},E)}^{\infty}$ isometry-valued and $U\in
L_{B(E_{0},E\oplus F)}^{\infty}$ being zero or isometry-valued. If $U=0$, then
$N_{3}=L_{E}^{2}$ and $N=H_{F}^{2}$ is splitting. Thus as in Case 1b, there is
$\Phi$ such that $N=\overline{R(V_{\Phi})}$. Now we assume $U$ is
isometry-valued and $\dim E_{0}=n$. Then $1\leq n\leq m$. Since $\dim E+\dim
F=m+1$, $U(z)E_{0}\perp\Omega L_{E_{2}}^{2}$, by Lemma \ref{condition} (iii),
$\max\{1,m-n\}\leq\dim E_{2}\leq m+1-n$.

If $n < m$ and $\dim E_{2}=m-n$, by Proposition \ref{type2condition},
$\text{rank}C(z)=0$ and $C=0$. Therefore $N_{3}\subset L_{E}^{2}$, and
$N=M\oplus H_{F}^{2}$ is splitting, where $M$ is some invariant subspace of
$S_{E}$. As in Case 1b, there is $\Phi$ such that $N=\overline{R(V_{\Phi})}$.

If $n < m$ and $\dim E_{2}=m-n+1$, then $\dim E_{0}+\dim E_{2}=m+1$ and
$E_{0}\oplus E_{2}=E\oplus F$. Thus Lemma \ref{rangetypeii} implies that there
is a partial isometry $V_{\Phi}$ such that $N=R(V_{\Phi})$.

If $n = m$, then $\dim E_{2} = 1$ and $E_{0}\oplus E_{2}=E\oplus F$. So by
Lemma \ref{rangetypeii} there is a partial isometry $V_{\Phi}$ such that
$N=R(V_{\Phi})$. The proof is complete.
\end{proof}

When $\dim E<\infty$ and $1<\dim F<\infty$, we don't know whether every
invariant subspace of $S_{E}\oplus S_{F}^{\ast}$ is of the form $\overline
{R(V_{\Phi})}$ with $\Phi\in\Gamma_{E\oplus F}^{\infty}$. For the
non-splitting invariant subspaces examples in (\ref{modifiedexamp}), it is not hard to see
that they are of form $R(V_{\Phi}).$ Assume $\dim E=1,\dim F=2$. Let
\[
N=\left\{  \left[
\begin{array}
[c]{cccccc}%
f & f(0) & f(0)
\end{array}
\right]  ^{tr}:f\in H^{2}\right\}  .
\]
Then $N=R(V_{\Phi}),$ where
\[
V_{\Phi}=\frac{1}{\sqrt{3}}\left[
\begin{array}
[c]{c}%
T_{a}\\
H_{\overline{z}}\\
H_{\overline{z}}%
\end{array}
\right]  ,a(z)=1,\Phi=\frac{1}{\sqrt{3}}\left[
\begin{array}
[c]{c}%
a\\
\overline{z}\\
\overline{z}%
\end{array}
\right]  \text{ is isometry-valued.}%
\]

We end this section with the following question which is a more precise
version of Question \ref{invaiantr}.

\begin{question}
Assume $\dim E < \infty$ and $1<\dim F <\infty$. Let $N$ be a non-splitting
invariant subspace of $S_{E}\oplus S_{F}^{\ast}$. Then is $N = \overline
{R(V_{\Phi})}$ for some $\Phi\in\Gamma_{E\oplus F}^{\infty}$?
\end{question}

\

\noindent \textbf{Acknowledgement}:
S. Luo was supported by NNSFC (\# 11701167).

\bigskip\bigskip

Caixing Gu

Department of Mathematics, California Polytechnic State University,

San Luis O-bispo, CA 93407, USA

E-mail address: cgu@calpoly.edu \bigskip\bigskip

Shuaibing Luo

School of Mathematics, and Hunan Provincial Key Laboratory of Intelligent
information processing and Applied Mathematics, Hunan University,

Changsha, 410082, PR China

E-mail address: sluo@hnu.edu.cn

\bigskip\bigskip

Mathematics Subject Classification (2010). 47A15, 47B37, 47B35

Keywords. Invariant subspace, unilateral shift, Hankel operator, Toeplitz operator.


\begin{thebibliography}{99}                                                                                               %


\bibitem {AlemanMalman}A. Aleman, B. Malman, Hilbert spaces of analytic
functions with a contractive backward shift. J. Funct. Anal. 277, no. 1,
157-199 (2019).

\bibitem {BrownDouglas}A. Brown and R. G. Douglas, Partially isometric
Toeplitz operators, Proc. Amer. Math. Soc., 16 (1965) 681-682.

\bibitem {BS2}A. B\"{o}ttcher and B. Silbermann, Analysis of Toeplitz
Operators, 2nd edition, Springer-Verlag, 2006.

\bibitem {Beurling}A. Beurling, On two problems concerning linear
transformations in Hilbert space. Acta Math. 81, 239-255 (1949).

\bibitem {CRoss}M.C. C\^{a}mara and W.T. Ross, The dual of the compressed
shift. Canad. Math. Bull. 64 (2021), no. 1, 98-111.

\bibitem {CHL}R. E. Curto, I. S. Hwang, and W. Y. Lee, The
Beurling--Lax--Halmos theorem for infinite multiplicity, J. Funct. Anal. 280
(2021), no. 6, 108884, 101 pp.

\bibitem {DingY}X. Ding and Y. Sang, Dual truncated Toeplitz operators, J.
Math. Anal. Appl., 461 (2018) 929-946.

\bibitem{DSS70}
R. G. Douglas, H. S. Shapiro, A. L. Shields, Cyclic vectors and invariant subspaces for the backward shift operator, Ann. Inst. Fourier (Grenoble) 20 (1970) 37-76.

\bibitem {GuJFA}C. Gu, A generalization of Cowen's characterization of
hyponormal Toeplitz operators,\ J. Funct. Anal., 124 (1994) 135-148.

\bibitem {GuDual}C. Gu, Characterizations of dual truncated Toeplitz
operators, J. Math. Anal. Appl., 496 (2021) 124815.

\bibitem {Gu}C. Gu, Dual truncated shift operators of high multiplicities on
subspaces of vector-valued $L^{2}$ spaces, preprint.

\bibitem {GuDOParkj}C. Gu, D. Kang, and J. Park, An inverse problem for
kernels of block Hankel operators, Bull. Sci. Math., (2021) 103021.

\bibitem {Halmos}P. Halmos, Shifts on Hilbert spaces. J. Reine Angew. Math.
208, 102-112 (1961).

\bibitem {Henry}H. Helson. Lectures on Invariant Subspaces, Academic Press,
New York-London, 1964.

\bibitem {HelsonL}H. Helson and D. Lowdenslager, Invariant subspaces. In:
Proc. Internat. Sympos. on Linear Spaces (Jerusalem, 1960), 251-262.
Jerusalem Academic Press, Pergamon, Oxford.

\bibitem {Lax}P. Lax, Shift invariant spaces. Acta Math. 101, 163-178 (1959).

\bibitem {LR15}S. Luo, S. Richter, Hankel operators and invariant subspaces of
the Dirichlet space. J. Lond. Math. Soc. (2) 91 (2015), no. 2, 423-438.

\bibitem {Nehari}Z. Nehari, On bounded bilinear forms, Ann. of Math., 65
(1957) 153-162.

\bibitem {Pe}V.V. Peller, Hankel Operators and Their Applications\textit{,}
Springer-Verlag, 2003.

\bibitem {Srini1}T.P. Srinivasan, Simply invariant subspaces. Bull. Am. Math.
Soc. 69 (1963) 706-709 .

\bibitem {NFBK}B. Sz.-Nagy, C. Foias, H. Bercovici, and L. K\'{e}rchy,
Harmonic Analysis of Operators on Hilbert Space, Springer, 2010.

\bibitem {Timotin}D. Timotin, The invariant subspaces of $S\oplus S^{\ast}$.
Concr. Oper. 7 (2020), no. 1, 116-123.


\end{thebibliography}
\end{document}